\newcommand{\todo}[1]{}    
\newcommand{\mycomment}[1]{}
\newcommand{\DD}{\displaystyle}
\newcommand{\uu}{{u}}
\newcommand{\ui}{{\underline{i}}}
\newcommand{\uj}{{\underline{j}}}
\newcommand{\uk}{{\underline{k}}}
\newcommand{\tauh}{\tau,h}
\newcommand{\blue}[1]{{\color{blue} #1}}
\newcommand{\forben}[1]{}
\newcommand{\red}[1]{{\color{red} #1}}
\newcommand{\utau}{\overline{u}_{\tau}}
\newcommand{\WW}{W}
\newcommand*{\bigtimes}{\mathop{\raisebox{-.5ex}{\hbox{\huge{$\times$}}}}}
\newcommand{\R}{\mathbb{R}}
\newcommand{\E}[1]{\mathbb{E}\left[#1\right]} 
\renewcommand{\d}{\,\mathrm{d}}
\newcommand{\D}{\mathcal{D}}
\newcommand{\HS}[2]{L_2(#1,#2)} 
\newcommand{\bfx}{\bf x}
\renewcommand{\bfx}{\mathbf{x}}
\newcommand{\bfphi}{\bm{\phi}}
\newcommand{\obfphi}{\overline{\bm{\phi}}}
\newcommand{\bfpsi}{\bm{\psi}}
\newcommand{\bfchi}{\bm{\chi}}
\newcommand{\Lp}[1][p]{{\mathbb{L}^{#1}}} 
\newcommand{\Hk}[1][k]{{\mathbb{H}^{#1}}} 
\newcommand{\Honez}{{\mathbb{H}^1_0}}
\newcommand{\Wkp}[2]{{\mathbb{W}^{#1,#2}}} 
\newcommand{\Wkpz}[2]{{\mathbb{W}_0^{#1,#2}}} 
\newcommand{\Ltwo}{\Lp[2]}
\newcommand{\Hmone}{\Hk[-1]}
\newcommand{\Vh}{\mathbb{V}_h}
\newcommand{\vh}{v_h}
\newcommand{\mDeltaD}{(-\Delta)}
\newcommand{\Vv}{\mathbb{V}}
\newcommand{\Hh}{\mathbb{H}}
\newcommand{\Kk}{\mathbb{K}}
\newcommand{\dual}[1]{
\noexpandarg
\StrLen{#1}[\mystringlen]
\ifthenelse{\mystringlen=1}{{#1}'}{(#1)'}
}
\newcommand{\dpairing}[3]{\left\langle #1, #2\right \rangle_{\dual{#3}\times #3}}
\newtheorem{theorem}{Theorem}[section]
\newtheorem{example}{Example}[section]
\newtheorem{definition}[theorem]{Definition}
\newtheorem{rem}[theorem]{Remark}
\newtheorem{assumption}{Assumption}
\newtheorem{lemma}[theorem]{Lemma}
\newtheorem{corollar}[theorem]{Corollary}
\newtheorem{cor}[theorem]{Corollary}
\begin{document}

\title[Numerical approximation of singular-degenerate SPDEs]{Numerical approximation of singular-degenerate parabolic stochastic PDEs}

\author{\v{L}ubom\'{i}r Ba\v{n}as}
\address{Department of Mathematics, Bielefeld University, 33501 Bielefeld, Germany}
\email{banas@math.uni-bielefeld.de}
\author{Benjamin Gess}
\address{Department of Mathematics, Bielefeld University, 33501 Bielefeld, Germany and Max Planck Institute for Mathematics in the Sciences, Inselstr. 22, 04103 Leipzig, Germany}
\email{bgess@math.uni-bielefeld.de}
\author{Christian Vieth}
\address{Department of Mathematics, Bielefeld University, 33501 Bielefeld, Germany}
 \email{cvieth@math.uni-bielefeld.de}

\begin{abstract}
We study a general class of singular degenerate parabolic stochastic partial differential equations (SPDEs) which include, in particular,
the stochastic porous medium equations and the stochastic fast diffusion equation.
We propose a fully discrete numerical approximation of the considered SPDEs based on the very weak formulation.
By exploiting the monotonicity properties of the proposed formulation
we prove the convergence of the numerical approximation towards the unique solution.
Furthermore, we construct an implementable finite element scheme for the spatial discretization of the very weak formulation
and provide numerical simulations to demonstrate the practicability of the proposed discretization.
\end{abstract}

\maketitle

\section{Introduction}
In this paper we study the numerical approximation of a class of singular-degenerate parabolic stochastic partial differential equations
\begin{equation}\label{spme0}
\d u = [\Delta (|u|^{p-2}u) + f]\d t +\sigma(u)\d \WW \qquad\mathrm{in}\,\, (0,T)\times\mathcal{D}\,,
\end{equation}
where $\mathcal{D}\subset \mathbb{R}^d$, $d\geq 1$ is a bounded, open domain and $\sigma(u) \WW$ is a multiplicative noise term
which will be specified below.

The above equation for $p>2$ is the stochastic \textit{porous medium equation} and for $p\in (1,2)$ the equation corresponds to the stochastic \textit{fast diffusion equation};
the case $p=2$ yields the \textit{stochastic heat equation}.

Stochastic quasilinear diffusion equations of the type \eqref{spme0} appear in several contexts, including, interacting branching diffusion processes \cite{DGG20}, self-organized criticality  \cite{BBDPR09,G15}, and non-equilibrium fluctuations in non-equilibrium statistical mechanics \cite{FG20,DSZ16}. We next present three of such instances in more detail.

{As a first example, consider the $\mathbb{H}^{-1}$ gradient flow structure of the porous medium equation
$$\partial_t\uu=-K_\uu\left(\frac{\delta E}{\delta\uu}(\uu)\right)=\Delta (|\uu|^{p-2}\uu)$$
with Onsager operator $K_\uu = -\Delta$ and energy $E(\uu)=\frac{1}{p}\int |\uu|^p\ dx$. The corresponding fluctuating system, in accordance with the GENERIC framework of non-equilibrium thermodynamics (see \cite{O05}), then reads
\begin{align}
  d\uu&=-K_\uu\left(\frac{\delta E}{\delta\uu}(\uu)\right)+B_\uu \d \WW \label{eq:fluct_SPDE_Generic}\,,\\
  &=\Delta (|u|^{p-2}u)+\sqrt{2\kappa_B}\ \mathrm{div}(\d \WW)\,, \label{eq:fluct_SPDE_6}
\end{align}
with $B_\uu B_\uu^*=2\kappa_B K_\uu$, $\kappa_B$ the Boltzmann constant and $\WW$ a vector-valued space-time white noise. Notably, the stochastic PDE \eqref{eq:fluct_SPDE_6} is super-critical and, thus, lacks a well-posedness theory. The results of the present paper are applicable to approximate versions of  \eqref{eq:fluct_SPDE_6}, that is, to
\begin{equation}\label{eq:fluct_SPDE_7}
  d\uu=\Delta (|u|^{p-2}u)+\sqrt{2\kappa_B}\ \mathrm{div}(\d \tilde\WW),
\end{equation}
where $\tilde\WW$ is a trace-class Wiener process in $\mathbb{L}^2$; in this case, in one spatial dimension, the stochastic perturbation $\mathrm{div}(\tilde \WW)$ still is less-regular than space-time white noise.

The second class of examples arises from fluctuations in non-equilibrium statistical mechanics. This leads to stochastic PDE of the general type
\begin{equation}\label{eq:fluct_SPDE}
  du=\Delta\alpha(u)\d t+\varepsilon^{\frac{1}{2}}\nabla\cdot(g(u)dW_t),
\end{equation}
where $dW$ denotes space-time white noise, with the Dean-Kawasaki stochastic PDE
$$du=\Delta u\ \d t+\varepsilon^{\frac{1}{2}}\nabla\cdot(\sqrt u dW_t),$$
as a model example, see for example \cite{D96,LKR19,DFVE14}. Stochastic PDE of this type serve as continuum models for interacting particle systems, including stochastic corrections reproducing the correct fluctuation behavior on the central limit and large deviations scale, see \cite{DFG20}. Since for large particle number the fluctuations decay, we see the small factor $\varepsilon^{\frac{1}{2}}$ in front of the noise. For example, a concrete example of an interacting particle process is given by the zero range process, see \cite{FG20,FG19}, leading to nonlinear, non-degenerate diffusion $\alpha$ in \eqref{eq:fluct_SPDE} and noise coefficients corresponding to $g(u)=\alpha^\frac{1}{2}(u)$. We note that with this choice \eqref{eq:fluct_SPDE} is in line with the GENERIC framework \eqref{eq:fluct_SPDE_Generic} when considering\begin{equation}\label{eq:fluct_SPDE_2}
  {\partial_t} u=\Delta\alpha(u)
\end{equation}
as a gradient flow on the space of measures with energy given by the Boltzmann entropy. The corresponding stochastic PDE \eqref{eq:fluct_SPDE} is super-critical and, therefore, lacks a well-posedness theory. Instead, one considers joint scaling limits $\varepsilon\to0,N\to\infty$ of 
\begin{equation}\label{eq:fluct_SPDE_3}
  du=\Delta\alpha(u)\d t+\varepsilon^{\frac{1}{2}}\nabla\cdot(g(u)dW^N),
\end{equation}
where $W^N$ is a regularized noise, see \cite{FG19,FG20}. In the case $\alpha' \ge c >0$ and $g$ Lipschitz continuous, this class of stochastic PDE is included in the results of the present work. 

The third class of equations covered by the present work arises in the continuum scaling limit of the empirical mass of interacting branching diffusions with localized interaction, which, informally, converges to the solution of a stochastic PDE
\begin{equation}\label{eq:branching_IPS}
d\uu=\Delta\uu^{2}\d t+(\uu c(\uu))^{\frac{1}{2}}\d W,
\end{equation}
where $\d W$ denotes space-time white noise, see \cite{DGG,Meleard}. The results of the present work apply to the particular case of $c(\uu)=\uu$ and $W$ being a trace class Wiener process in {$\Hh^{\frac{d+2}{2}}$}. 
}

It is common to these stochastic PDE that, due to the irregularity of the random perturbation, solutions are expected to be of low regularity. In fact, in many cases solution take values in spaces of distributions only, causing severe difficulties in even giving meaning to the nonlinear terms appearing in the stochastic PDE.

The lack of regularity of solutions is one of the decisive differences distinguishing the numerical analysis of stochastic PDE from deterministic PDE. While, if the noise and thus the solutions are regular enough, the numerical analysis can proceed similarly to the deterministic case, this ceases to be true in more rough situations. Indeed, if one considers \eqref{spme0} with regular enough noise, the solutions will take values in spaces of functions ($L^{p}$ spaces), and, therefore, standard finite element basis can be used, such as piecewise constant or piecewise linear functions. The proof of their convergence still requires adaptation from the deterministic arguments, e.g. replacing compactness arguments by a combination of tightness arguments and Skorohod's representation theorem (cf.~e.g.~\cite{gg_2019}), but the numerical method is close to the deterministic case. In contrast, when the noise is not as regular, one cannot expect to close $L^{p}$-based estimates, but one has to work in spaces of distributions. Concretely, this means to move from $L^{p}$-based estimates for \eqref{spme0} to $\mathbb{H}^{-1}$-based estimates. 

While the modification of finite element methods from $L^{2}$-based to $\mathbb{H}^{-1}$-based thus is necessary and natural in the context of stochastic PDE, this causes obstacles in their numerical realization: Precisely, while in an $L^{2}$-based approach, the choice of piecewise constant (or piecewise linear) finite elements $\phi_{i}$ leads to a sparse mass matrix
\[
(\tilde{\bf M}_h)_{i,j}=(\phi_{i},\phi_{j})_{\Ltwo},
\]
this is not true in the $\mathbb{H}^{-1}$-based approach which leads to a mass matrix
\begin{equation}\label{eq:intro-mass}
 ({\bf M}_h)_{i,j}=(\phi_{i},\phi_{j})_{\Hmone}=(\phi_{i},\mDeltaD^{-1}\phi_{j})_{\Ltwo}.
\end{equation}
Note that \eqref{eq:intro-mass} is not a sparse matrix, since $\mDeltaD^{-1}\phi_{j}$ has global support. Consequently, the resulting numerical scheme is inefficient. 

Interestingly, in one spatial dimension this difficulty was addressed in the contribution \cite{EmmrichSiska}, where an $\mathbb{H}^{-1}$ -based finite element scheme was suggested in the context of a \textit{deterministic} porous medium equation, motivated by the aim to treat irregular initial data and forcing. In \cite{EmmrichSiska} it was noticed, that in one spatial dimension a modified finite element basis $\tilde{\phi}_{i}$ can be constructed, leading to a sparse mass-matrix \eqref{eq:intro-mass}. In view of \eqref{eq:intro-mass} this requires to choose a basis so that $\mDeltaD^{-1}\phi_{j}$ has small support. While, in one spatial dimension, this can relatively easily be enforced by choosing $\phi_{i}$ of the form $$-a_{i-1}1_{[x_{i-1},x_{i})}+a_{i}1_{[x_{i},x_{i+1})}-a_{i+1}1_{[x_{i+1},x_{i+2})},$$ for $d\ge2$ this construction becomes less obvious. In addition, in higher dimension, the proof of the $L^{p}$-density of the resulting finite element spaces proves much more challenging. 

In the light of this exposition, the contribution of the present work is two-fold: Firstly, motivated by the intrinsic irregularity of stochastic PDE, we provide an $\mathbb{H}^{-1}$ based analysis of a fully discrete finite element scheme for \eqref{spme0} and prove its convergence. Secondly, we construct a finite element basis in dimension $d\ge2$, which allows for an efficient implementation
of the proposed numerical approximation in the $\mathbb{H}^{-1}$-setting, and analyze its approximation properties in $L^{p}$. 
More precisely, motivated by the deterministic numerical approximation \cite{EmmrichSiska} we propose a fully discrete finite element based numerical approximation
of \eqref{spme0} based on its very weak formulation.
We show that the proposed numerical approximation converges for {$p\in (1, \infty)$.
Furthermore,  we generalize the finite element spatial discretization of the very weak formulation, which was restricted to $d=1$ in \cite{EmmrichSiska},
to higher dimensions. 
Moreover, we present numerical simulations to demonstrate the efficiency and convergence behavior of the proposed numerical scheme.

The paper is organized as follows. In Section~\ref{sec_not} we state
the notation and assumptions along with the definition and basic properties of very weak solutions of \eqref{spme0}.
We introduce the fully discrete numerical approximation of \eqref{spme0} in Section~\ref{sec_num}
and show well-posedness of the proposed discrete approximation along with a priori estimates for the numerical solution.
The convergence of the numerical approximation towards the very weak solution of \eqref{spme0} is shown
in Section~\ref{sec_conv}.
In Section~\ref{sec_fem} we propose and analyze a non-standard finite element scheme for
the spatial discretization of the very weak solution which enables an efficient implementation
of the resulting fully discrete numerical approximation.
Numerical simulations which demonstrate the practicability of the proposed numerical scheme are presented in Section~\ref{sec_num_exp}.

\subsection*{Comments on the literature}
There exists a rich literature on the numerical approximation of deterministic degenerate parabolic equations, i.e. \eqref{spme0} with  $\sigma(u)\equiv 0$,
where the earlier results include \cite{MNV87}, \cite{JK91}. For more recent results we refer to \cite{EL08}, \cite{EmmrichSiska}, \cite{DEJ19}, \cite{DroniouFastDiff} and the references therein.
As far as we are aware, the only result on the numerical approximation of \eqref{spme0} so far is  \cite{gg_2019},
where the convergence of the proposed numerical approximation towards a martingale solution has been shown in dimension $d=1$ {for regular noise}
and a limited range of the exponent $p\in(2,3)$, not including the case of the stochastic fast diffusion equation.

In the deterministic setting, the analysis of the equation \eqref{spme0} is well understood, see, e.g.~\cite{Vazquez}.
In the stochastic setting, the well-posedness of \eqref{spme0} in the variational framework goes back to \cite{KR_SEE,P75} with many details given in \cite{LR15};
for a generalization of the variational approach to the case of the stochastic fast diffusion equation we refer to \cite{RRW07}. 
Generalizations to maximal monotone nonlinearities and Cauchy problems can be found in \cite{BDPR}, based on monotonicity techniques.
Martingale solutions for diffusion coefficients given as Nemytskii operators have been constructed in \cite{GRZ09}. In \cite{K06} the well-posedness for \eqref{spme0} with additive noise was shown based on a weak convergence approach. An $L^1$-based alternative approach to well-posedness has been developed based on entropy solutions in \cite{BVW15,DGG,Da.Ge20} and based on kinetic solutions in \cite{GS16-2,De.Ho.Vo2016,GH18,FG18,FG19}. Solutions to \eqref{spme0} with space time white multiplicative noise have been constructed in \cite{Da.Ge.Ge2020}.

Besides well-posedness, also the long-time behavior of solutions has been analyzed, see, for example, \cite{FG18} for the existence of random dynamical systems, \cite{BGLR10,G13-2} for the existence of random attractors, and \cite{BDPR,DaGeTs,W15-2} for ergodicity. For regularity of solutions we refer to \cite{G12,DdMH15,DG17,BR15} and the references therein. Results on finite speed of propagation and waiting times were derived in \cite{G13-1,BDPR,FG15}. Extensions to parabolic-hyperbolic SPDE may be found in \cite{BR18,BVW15}, and to doubly nonlinear SPDE in \cite{Sc.St2019} and the references therein.

\section{Notation and preliminaries} \label{sec_not}
Let  $\D\subset \mathbb{R}^d$  be a bounded open domain {with $\mathcal{C}^{1,1}$-smooth boundary}  $\partial\D$
or a {rectangular} domain.
For $1\leq p \leq \infty  $, we denote the conjugate exponent as $p' = \frac{p}{p-1}$.
We use the notation $(\mathbb{L}^p,\|\cdot\|_{\mathbb{L}^p})$ for the standard Lebesgue spaces of $p$-th order integrable functions on $\D$
and $(\Wkp{k}{p},\|\cdot\|_{\Wkp{k}{p}})$ for the standard Sobolev spaces on $\D$,
where $(\Wkpz{k}{p},\|\cdot\|_{\Wkpz{k}{p}})$ stands for the $\Wkp{k}{p}$ space with zero trace on $\partial \D$; 
for $p=2$ we denote the corresponding Sobolev spaces as $(\Hk,\|\cdot\|_{\Hk})$ and $(\Honez,\|\cdot\|_{\Honez})$.
We note that the dual space of  $\Honez$,  denoted by $(\Hmone,\|\cdot\|_{\Hmone})$, is a Hilbert space with the scalar product
$(v,w)_{\Hmone}:=( v, \mDeltaD^{-1}w )_{\Ltwo}= (\nabla \mDeltaD^{-1}v, \nabla \mDeltaD^{-1}w )_{\Ltwo}$
where $\mDeltaD^{-1}$ is the inverse Dirichlet Laplace operator $\mDeltaD^{-1}: \Hmone\rightarrow \Honez$. 

{Throughout the paper we denote $\Vv:=(\Lp\cap \Hmone)$, and $\Hh:=\Hmone$
and note that $\Vv\hookrightarrow \Hh \equiv \dual{\Hh}\hookrightarrow \dual{\Vv}$ constitutes a Gelfand triple
for the considered range of the exponent $p$ in $d\geq 1$ (for $p \geq 2$ one may take $\Vv\equiv \Lp$), cf., \cite{book_lions}.}

For $v\in \Hmone$ we define the inverse Laplace operator $\tilde{v}=:\mDeltaD^{-1}v$ as
the unique weak solution  of the problem
\begin{equation}
\begin{aligned}
-\Delta \tilde{v} = v &\quad \textnormal{ in } \D,\\
\tilde{v} = 0 & \quad\textnormal{ on } \partial \D\,.
\end{aligned}
\end{equation}
We note that {the above assumption on $\D$} guarantees that $\mDeltaD^{-1}v\in \Wkp{2}{p}\cap\Wkpz{1}{p}$
for $v\in \Vv\hookrightarrow\Hh= \Hmone$ and that $\tilde{v}$ depends continuously on $v$.

We consider $W$ to be a cylindrical Wiener process on a real separable Hilbert space $\Kk$, that is, for an orthonormal basis
$\{\tilde{e}_i\}_{i\in \mathbb{N}}$ of $\Kk$, we (formally) have  $W(t)=\sum_{i\in\mathbb{N}}\tilde{e}_i \beta_i(t)$ with $\{\beta_i(t)\}_{i\in \mathbb{N}}$ independent Brownian motions 
on a filtered probability space $(\Omega, {\mathcal F}, \{ {\mathcal F}_t\}_t, {\mathbb P})$.
Let $\HS{\Kk}{\Hh}$ denote the space of real Hilbert-Schmidt linear operators from $\Kk$ to $\Hh$. We note that ($\HS{\Kk}{\Hh}$,  $\|\,\cdot\,\|_{\HS{\Kk}{\Hh}}$, $(\,\cdot\,,\,\cdot\,)_{\HS{\Kk}{\Hh}}$) 
is a real separable Hilbert space with inner product
$$
(\sigma_1,\sigma_2)_{\HS{\Kk}{\Hh}}=\sum_{i=1}^\infty (\sigma_1\tilde{e}_i,\sigma_2\tilde{e}_i)_\Hh\,,
$$
and the corresponding norm $\|\sigma\|_{\HS{\Kk}{\Hh}}^2=\sum_{i=1}^\infty \|\sigma \tilde{e}_i\|_\Hh^2$.


We consider a slight generalization of the equation \eqref{spme0}:
\begin{subequations}\label{spme}
\begin{alignat}{2}
\d u &= [\Delta \alpha(u) + f]\d t +\sigma(u)\d \WW \quad &&\textnormal{ in }  (0,T)\times\D,\label{stochPMEa}\\
\alpha(u) &= g &&\textnormal{ on } (0,T)\times\partial \D,\\
u(0) &= u_0, &&\textnormal{ in } \D,
\end{alignat}
\end{subequations}
where $\alpha:\mathbb{R}\rightarrow \mathbb{R}$, and $\sigma:\Vv \rightarrow \HS{\Kk}{\Hh}$;
the initial condition $u_0\in L^2(\Omega,\Hh)$ is assumed to be $\mathcal{F}_0$-measurable.

To simplify the presentation we consider (progressively measurable)
{$f\in L^{\infty}(\Omega\times(0,T)\times\D)$ and  $g\in L^{\infty}(\Omega\times (0,T)\times \partial\D)$},
a generalization to less regular data is straightforward, cf. \cite{EmmrichSiska}.
Furthermore, we assume that the function $\alpha :\mathbb{R} \rightarrow \mathbb{R}$ is continuous, monotonically increasing,
and satisfies a coercivity and growth condition, i.e.,
\begin{equation}\label{ass_alpha}
\alpha(z)z\ge \mu |z|^p-\lambda \quad \mathrm{and}\quad  |\alpha(z)|\le c(|z|+1)^{p-1}\,,\quad \forall z\in \mathbb{R}\,,
\end{equation}
for some $p>1$ and $c$, $\mu>0$, $\lambda\ge 0$, respectively.

Clearly, $\alpha(z)\equiv |z|^{p-2}z$ yields the stochastic porous medium/fast diffusion equation \eqref{spme0}
and satisfies the above assumptions for $p>1$.

{We note that $\mDeltaD^{-1}v\in \Wkp{2}{p}$ for $v\in \Vv$
by standard elliptic regularity theory, cf. \cite[Ch. 9]{Gilbarg}, \cite[Ch. 4]{book_grisvard}.
Furthermore, for $v\in \Vv$ the normal trace of $\mDeltaD^{-1}v$ satisfies $\partial_{\vec{n}}\left(\mDeltaD^{-1}v\right)\in W^{1/p',p}(\partial\D)$ 
for domains with $\mathcal{C}^{1,1}$-smooth boundary or rectangular domains, cf.~\cite[Thm.~5.4-5.5 p.~97-99]{Necas}. 
Hence, it follows that $b\in L^{p'}(\Omega\times (0,T);\dual{\Vv})$. 
In the particular case $g\equiv 0$ the following also generalizes to convex domains with piecewise smooth boundary.}

\begin{comment}
\begin{rem}[Regularity of $\D$]
In $2d$  we have that $\Honez \subset \Lp$ (i.e. $\|(-\Delta_D^{-1}) v\|_{\Lp} \leq C\|(-\Delta_D^{-1}) v\|_{\Honez} \leq C$) for $p\in (0,\infty)$.
Then since $(-\Delta_D^{-1}) v \in \Honez$ on \blue{any bounded connected domain $\D\subset \mathbb{R}^2$}.
it follows that $(-\Delta_D^{-1}) v \in \Lp$  for $p\in (1,\infty)$
and consequently \eqref{def_ab} is well defined for any $v \in \mathbb{V}:= \Lp\cap \Hmone$.
(need to check the regularity of $\partial_{\vec{n}}\mDeltaD^{-1}v)_{L^2(\partial\D)}$ or just set $g=0$).

3d still needs to be checked, in 3d we have that $\Honez \subset \mathbb{L}^{q}$ up to $q \leq 6$, check check check.
But on convex domains we have also $\mathbb{H}^2$ regularity, so I think it will also work.

(note: holds for $f$, $g$ with sufficient regularity, $L^\infty$ should be more than enough)
\end{rem}
\end{comment}


Throughout the paper we assume that the following conditions are satisfied.
\begin{assumption}\label{AssA}
\begin{enumerate}[i)]
\item Hemi-continuity of $A$: the function 
\begin{align*}
\epsilon \mapsto \dpairing{A(w + \epsilon z)}{v}{\Vv}: [0,1]\rightarrow \mathbb{R}
\end{align*}
is continuous for all $v,w,z\in \Vv$.
\item Monotonicity of $A$: there exists $\lambda_B\ge 0$, such that  for all $v,w\in \Lp$
\begin{equation}\label{lambdab} 
2\dpairing{Av-Aw}{v-w}{\Vv}+\lambda_B\|v-w\|_{\Hh}^2\ge \|\sigma(v)-\sigma(w)\|_{\HS{\Kk}{\Hh}}^2\, .
\end{equation}

\item  Coercivity of $A$: for $\mu>0$ and $\lambda,\lambda_A,\kappa_\sigma \ge 0$ it holds
\begin{align}\label{Aux23}
\dpairing{Av}{v}{\Vv}+\lambda_A\|v\|_{\Hh}^2\ge\,& \mu \|v\|_{\Vv}^p -\lambda |\D| +\frac{1}{2}\|\sigma(v)\|_{\HS{\Kk}{\Hh}}^2-\kappa_\sigma.
\end{align}
\item Boundedness of $A$: there exists a $C> 0$ such that
$$\|Av\|_{\dual{\Vv}}\le C(\|v\|_{\Vv}+1)^{p-1} \quad \forall v \in \Vv
.$$
\end{enumerate}
\end{assumption}

We next generalize the concept of very weak solutions for the deterministic version of \eqref{spme} with $\sigma(u)\equiv 0$  from \cite{EmmrichSiska}
to the stochastic problem.
We consider the integral form of \eqref{spme} as
\begin{align*}
u(t) = u_0 + \int_0^t [\Delta \alpha(u(s)) + f(s)] \d s + \int_0^t \sigma(u(s))\d \WW(s).
\end{align*}
We multiply the above equation by $\tilde{v}=\mDeltaD^{-1}v$, integrate over $\D$, and integrate twice by parts in the second order term
to obtain, using the boundary condition,
\begin{align*}
(u(t),\mDeltaD^{-1}v)_{\Ltwo} &= (u_0,\mDeltaD^{-1}v)_{\Ltwo}-\int_0^t(\alpha(u(s)),v)_{\Ltwo}\d s\nonumber\\
&\quad -\int_0^t(g(s),\partial_{\vec{n}}\mDeltaD^{-1}v)_{L^2(\partial\D)}\d s\nonumber\\
&\quad +\int_0^t( f(s),\mDeltaD^{-1}v)_{\Ltwo}\d s\nonumber\\
&\quad +\int_0^t( \sigma(u(s))\d \WW(s),\mDeltaD^{-1}v)_{\Ltwo}\,.
\end{align*}

The above formal construction motivates the following definition of very weak solutions of the stochastic problem \eqref{spme}.
\begin{definition}\label{def_veryweak}
Let $u_0 \in L^2(\Omega, \mathcal{F}_0, \mathbb{P}; \Hh)$.
Then a $\mathcal{F}_t$-adapted {process}
{$u\in L^p(\Omega, \{ \mathcal{F}_t\}_t, \mathbb{P}; L^p((0,T);\Vv))\cap L^2(\Omega, \{ \mathcal{F}_t\}_t, \mathbb{P};C([0,T];\Hh))$}
is a \textbf{very weak solution} of \eqref{spme}
if it satisfies $\mathbb{P}$-a.s. for all $v\in \Vv$ and all $t\in [0,T]$:
\begin{align}
(u(t),v)_{\Hh}=&\; (u_0,v)_{\Hh}- \int_0^t \dpairing{A u(s)}{v}{\Vv} \d s\nonumber\\
&\quad + \int_0^t \dpairing{b(s)}{v}{\Vv} \d s + \int_0^t (\sigma(u(s))\d \WW(s),v)_{\Hh},\label{oSPME}
\end{align}
with
\begin{align}\label{def_ab}
\dpairing{A u(s)}{v}{\Vv}&= ( \alpha(u(s)),v)_{\Ltwo}\,,
\nonumber 
\\
\dpairing{b(s)}{v}{\Vv}&=( f(s),\mDeltaD^{-1}v)_{\Ltwo} - (g(s),\partial_{\vec{n}}\mDeltaD^{-1}v)_{L^2(\partial\D)}\,.
\end{align}
\end{definition}

\begin{rem}
Owing to the Assumption~\ref{AssA} 
we may interpret the very weak formulation of \eqref{spme} from Definition~\ref{def_veryweak}
as a monotone stochastic evolution equation  posed on the Gelfand triple 
$\Vv\hookrightarrow \Hh \equiv \dual{\Hh}\hookrightarrow \dual{\Vv}$,
cf. \cite[Th\'eor\`eme 3.1]{book_lions}, \cite{EmmrichSiska}.
Hence, the existence and uniqueness of the very weak solution in Definition~\ref{def_veryweak}
follows by the standard theory of monotone stochastic evolution equations \cite{KR_SEE}, {\cite{RRW07}}.
\end{rem}

{
Below we state examples of SPDE problems covered by the framework of Assumption \ref{AssA}; these include all of the problems mentioned in the introduction, in particular.
We let $\{e_k\}_{k\in\mathbb{N}}$ be an orthonormal basis of $\mathbb{L}^2$ consisting of eigenvectors of the Laplacian $-\Delta$ with Dirichlet boundary conditions and corresponding eigenvalues $\{\lambda_k\}_{k\in\mathbb{N}}$. 
We note that 
\begin{equation}\label{eq:growth_ef}
   \|e_k\|_{\mathbb{L}^\infty}\lesssim {\lambda_k}^{d/4}, \text{ and } \|\nabla e_k\|_{\mathbb{L}^\infty}\lesssim {\lambda_k}^{(d+2)/4}.
\end{equation}

\begin{example}[GENERIC framework for the $\mathbb{H}^{-1}$-gradient flow]
  We consider \eqref{eq:fluct_SPDE_7} with $p > 1$ and $W=\sum_{i=1}^\infty \eta_i e_i\beta_i$ a trace-class Wiener process in $\mathbb{L}^2$, i.e., $\sum_{i=1}^\infty\eta_i^2 < \infty$. 
Then, $ \mathrm{div}(W)$ is a trace-class Wiener process in $\mathbb{H}^{-1}$ 
and we choose $\Vv = \mathbb{L}^p\cap \mathbb{H}^{-1}$, $\Hh=\mathbb{H}^{-1}$, $\Kk=\Ltwo$,
$A(u)=-\Delta (|u|^{p-2}u)$ extended to $\Vv \to \Vv'$ and $\sigma(u)w \equiv\sigma w :=  \sum_{i=1}^\infty \eta_i (e_i,w)_{\mathbb{L}^2} \mathrm{div} (e_i)$. Then, Assumption \ref{AssA} can be verified analogously to \cite{LR15}. 
\end{example}

\begin{example}[Fluctuations in non-equilibrium systems]
We consider \eqref{eq:fluct_SPDE_3} so that $\alpha\in\mathcal{C}^1(\mathbb{R})$ satisfies $c^* < \alpha^\prime < C^*$ for some $c^*,C^*>0$, $g$ is Lipschitz continuous and $W=(\beta_1,\dots,\beta_N)$ is a $\R^N$-valued Brownian motion, that is, 
\begin{equation*}
  du=\Delta\alpha(u)dt+\varepsilon^{\frac{1}{2}}\nabla\cdot(g(u)\d W),
\end{equation*}
for $\varepsilon \le  \frac{c^*}{2 C(N)}$, where $C(N)= \left(\sum_{i=1}^N\|e_i\|_{\mathbb{L}^\infty}^2\right)$. We choose $\Vv = \Ltwo$, $\Hh=\mathbb{H}^{-1}$, $\Kk=\R^N$, $A(v)=-\Delta \alpha(v)$ extended to $\Vv \to \Vv'$, and
    $$\sigma(\uu)w:= \varepsilon^{\frac{1}{2}}\sum_{i=1}^N \nabla\cdot\big(g(\uu)e_i(w,\tilde e_i)_{{\mathbb{R}^N}}\big).$$
  We then have  
  \begin{equation*}\begin{split}
  &-2\dpairing{Av-Aw}{v-w}{\Vv}+\|\sigma(v)-\sigma(w)\|_{\HS{\Kk}{\Hh}}^2 \\
  &=-2\dpairing{Av-Aw}{v-w}{\Vv}+\sum_{j=1}^N\|\sigma(v)\tilde e_j-\sigma(w)\tilde e_j\|_{\Hh}^2 \\
  &= -(\alpha(v)-\alpha(w),v-w)_{\Ltwo} + \varepsilon  \sum_{j=1}^N \|\nabla\cdot (g(v)e_i) -\nabla\cdot (g(w)e_i)\|_{\mathbb{H}^{-1}}^2 \\
    &\le -c^* \|v-w\|_{\Ltwo}^2 + \varepsilon \left(\sum_{i=1}^N\|e_i\|_{\Lp[\infty]}^2\right) \|g(v)-g(w)\|_{\Ltwo}^2 \\
    &\le -c^* \|v-w\|_{\Ltwo}^2 + C(N)\varepsilon \|g\|_{Lip} \|v-w\|_{\Ltwo}^2 \le -\frac{c^*}{2}  \|v-w\|_{\Ltwo}^2.
  \end{split}\end{equation*}
  The remaining assumptions can be verified similarly. We note that the scaling relation $\varepsilon \le  \frac{c^*}{2 C(N)}$ implicitly depends on the dimension $d$, since the number of frequency modes $\le N$ depends on the dimension, cf. \cite{DFG20}.
\end{example}

\begin{example}[Branching interacting particle systems]
  We consider \eqref{eq:branching_IPS} with $c(\uu)=\uu$ and $\tilde{W}$ is a trace-class Wiener process in $\Hh^{1}$, 
   that is, 
  \begin{equation}\label{eq:branching_IPS_2}
     d\uu=\Delta\uu^{[2]}dt+\uu \d \tilde{W},
  \end{equation}
  with $\uu^{[2]} := |\uu|\uu$ and non-negative initial condition $\uu_0$. In order to fit this example in the abstract setup of Assumption~\ref{AssA} we choose $\Vv = \mathbb{L}^3$, $\Hh=\mathbb{H}^{-1}$, $\Kk=\ell^2$.
  Let $W$ be a cylindrical Wiener process on $\Kk$, $A(v)=-\Delta\uu^{[2]}$ extended to $\Vv \to \Vv'$, and
    $$\sigma(\uu)w:=\uu \sum_{i=1}^\infty e_i \eta_i(w,{\tilde e}_i)_{\ell^2},$$  
where $\eta_i>0$, $i\in \mathbb{N}$ satisfy {$\left(\sum_{i=1}^\infty \eta_i^2 \lambda_i^{\frac{d+2}{2}} \right)<\infty$. Note that then $\tilde W := \sum_{i=1}^\infty \eta_i e_i \beta_i$ defines a trace class Wiener process in $\Hh^\frac{d+2}{2}$.}  
  We then have, {by \eqref{eq:growth_ef}},
  \begin{equation*}\begin{split}
  &-2\dpairing{Av-Aw}{v-w}{\Vv}+ \|\sigma(v)-\sigma(w)\|_{\HS{\Kk}{\Hh}}^2 \\
  &= -(v^{[2]}-w^{[2]},v-w)_{L^2} +  \sum_{i=1}^\infty \|\sigma(v){\tilde e_i} -\sigma(w){\tilde e_i}\|_{\mathbb{H}^{-1}}^2 \\
  &\le  \sum_{i=1}^\infty \|(v-w) (e_i \eta_i)\|_{\mathbb{H}^{-1}}^2   
 {\le   \left(\sum_{i=1}^\infty \eta_i^2 \|e_i\|_{\mathbb{W}^{1,\infty}}^2 \right)  \|v-w\|_{\mathbb{H}^{-1}}^2}\\
&\le   \left(\sum_{i=1}^\infty \eta_i^2 \lambda_i^{\frac{d+2}{2} } \right)  \|v-w\|_{\mathbb{H}^{-1}}^2  \le C  \|v-w\|_{\mathbb{H}^{-1}}^2.
  \end{split}\end{equation*}
    The remaining assumptions can be verified similarly.
\end{example}
}

\section{Fully discrete Numerical Approximation} \label{sec_num}

We introduce a uniform partition of the time interval $[0,T]$ with a constant time-step size $\tau=T/N$, where $N\in \mathbb{N}$,
as $0=t_0< t_1<\ldots<t_N=T$ with $ t_n:=n\tau$.
For a mesh size $h \in (0, 1]$ we 
consider a  family of finite dimensional subspaces $(\Vh)_{h>0}\subset \Vv$
with the approximation property
\begin{equation}\label{vh_conv}
\inf_{v_h\in \Vh}\|v-v_h\|_{\Vv}\rightarrow 0 \quad\text{for }h\rightarrow 0, \quad \forall v\in \Vv\,,
\end{equation}
and let $\tilde{J}\equiv \tilde{J}_h=\mathrm{dim}(\Vh)$ for any $h>0$.
{We define a family of mappings $R_h:\Vv \rightarrow \Vh$ via the best approximation property, i.e.,  
$R_h v = \displaystyle \underset{v_h\in \Vh}{\mathrm{arg\ inf}}\|v-v_h\|_{\Vv}$ for $v\in \Vv$.
Furthermore, we denote by $P_h:\Hh\rightarrow \Vh$ the family of projection operators which satisfy}
$$
\lim_{h\rightarrow 0} \|w-P_hw\|_{\Hh}= 0\quad\forall w\in \Hh\,.
$$ 
An explicit construction of the discrete finite element spaces $\Vh$ and the operators $R_h$ and $P_h$ will be provided in
Section~\ref{sec_fem} below (see Lemma~\ref{lem_rh_conv}, Corollary~\ref{cor_rh} and Remark~\ref{rem_conv_ph}).

We define the discrete Brownian increments for $i= 1,2,\dots$ as
\begin{equation}\label{betainc}
\Delta_n \beta_i:=
\begin{cases} 0&\text{if }n = 1,\\
\beta_i(t_n)-\beta_i(t_{n-1})&\text{if }n = 2,\ldots, N\, ,
\end{cases}
\end{equation}
and for $r\in\mathbb{N}$ 
we define the truncated Hilbert-Schmidt operator $\sigma^r:\Vv\rightarrow \HS{\Kk}{\Hh}$ as 
$$
\sigma^r(u)w=\sum_{i=1}^r\sigma(u)\tilde{e}_i(w,\tilde{e}_i)_\Kk \qquad\text{for } w\in \Kk,
$$ 
where $\{\tilde{e}_i\}_{i\in\mathbb{N}}$ is the orthonormal basis of $\Kk$ and $u\in \Vv$.

The time-discrete approximation of the right-hand side $b$ (given in Definition~\ref{def_veryweak}) is obtained as
$$
b^n := \frac{1}{\tau} \int_{t_{n-1}}^{t_n} b(t) \d t \approx b(t_n)\,.
$$

Given $N\in \mathbb{N}$, $\tau=\frac{T}{N}$, $h>0$ and $r\geq 1$, the fully discrete approximation of \eqref{spme}
is obtained as follows: set $u_{h}^{0} = P_h u_0\in \Vh$, and  for $n = 1,\dots,N$ determine $u_h^n\in \Vh$ as the solution of the problem
\begin{align}\label{num_spme}
\left(u_h^n-u_h^{n-1},v_h\right)_{\Hh}+\tau \dpairing{A u_h^n}{v_h}{\Vv} = 
\tau  \dpairing{b^n}{v_h}{\Vv} + \left(\sigma^r(u_h^{n-1})\Delta_n \WW, v_h\right)_{\Hh}\,.
\end{align}
for all  $v_h\in \Vh$.
We note that the above scheme can be equivalently rewritten as
\begin{align}\label{num_spme_int}
\left(u_h^n,v_h\right)_{\Hh}+\tau \sum_{k=1}^n \dpairing{A u_h^k}{v_h}{\Vv} = & \left(u_h^0,v_h\right)_{\Hh}
\tau \sum_{k=1}^n \dpairing{b^k}{v_h}{\Vv}
\\ \nonumber
&
 + \sum_{k=1}^n\left(\sigma^r(u_h^{k-1})\Delta_k \WW, v_h\right)_{\Hh}\,.
\end{align}

\begin{rem}
{We note that the choice $\Delta_1 \beta_i \equiv 0$, $i\in\mathbb{N}$ in \eqref{betainc} 
is not strictly required but is convenient since it slightly
simplifies the notation and convergence analysis in Section~\ref{sec_conv} for $u_0\in \Hh$.
In particular, this choice enables to restate the numerical scheme \eqref{num_spme_int} 
in the form \eqref{NumScheme_t} with the ''shifted'' interpolant $\utau^-$ defined in \eqref{def_u_htau_minus}
which satisfies the estimate in Corollary~\ref{corollary_boundedness_spme}.

An alternative is to show the convergence by a density argument.
For $u_0\in \Hh$ one can consider a sufficiently regular sequence $u^k_0\rightarrow u_0$, $k\rightarrow \infty$,
set $\Delta_1 \beta_i \equiv \beta_i(t_1)-\beta_i(t_0)$ and define $\utau(t) = u^k_0$ for $t\in [0,\tau)$.
Then the stochastic integral $\int_{\tau}^{\theta_\tau^+(t)}$ in \eqref{NumScheme_t} is replaced by $\int_{0}^{\theta_\tau^+(t)}$
and Corollary~\ref{corollary_boundedness_spme} holds for each $k < \infty$.}
\end{rem}

The measurability of the fully discrete solution is a consequence of the following lemma, c.f. \cite[Lemma 3.2]{EmmrichSiska}, \cite[Lemma 3.8]{GyongyMillet}.
\begin{lemma} \label{lem_measur}
Let $(S,\Sigma)$ be a measure space. Let ${\bf f}:S\times \Vh\rightarrow \Vh$ be a function that is continuous in its first argument for every (fixed) $\alpha\in S$
and is $\Sigma$-measurable in its second argument for every (fixed) $X\in \Vh$.
If for every $\alpha\in S$ the equation ${\bf f}(\alpha, X)=0_{\Vh}$ has a unique solution $X={\bf g}(\alpha)$ then ${\bf g}:S\rightarrow \Vh$ is $\Sigma$-measurable.
\end{lemma}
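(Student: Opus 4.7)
The plan is to recognize that ${\bf f}$ is a Carathéodory function from $S\times \Vh$ to the Polish space $\Vh$ (with the intended reading being that ${\bf f}(\alpha,\cdot)$ is continuous for each $\alpha\in S$ and ${\bf f}(\cdot,X)$ is $\Sigma$-measurable for each $X\in \Vh$) and then to conclude via a measurable selection argument. First, I would exploit that $\Vh$ is finite-dimensional, hence isomorphic to $\R^{\tilde J}$ and in particular a separable, complete metric space. The Carathéodory property then upgrades to joint $\Sigma\otimes\mathcal{B}(\Vh)$-measurability of ${\bf f}$: this is standard and obtained by approximating ${\bf f}(\alpha,X)$ by step functions in $\alpha$ along a countable dense subset $\{X_k\}\subset\Vh$ and extending to general $X\in\Vh$ by continuity in the second argument.

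Second, the graph of the candidate ${\bf g}$ coincides with the preimage ${\bf f}^{-1}(\{0_{\Vh}\})$ of the (measurable) zero element, and is therefore $\Sigma\otimes\mathcal{B}(\Vh)$-measurable. Introducing the set-valued map $\Phi(\alpha):=\{X\in\Vh : {\bf f}(\alpha,X)=0_{\Vh}\}$, the uniqueness hypothesis gives $\Phi(\alpha)=\{{\bf g}(\alpha)\}$, a closed singleton for every $\alpha$, whose graph is measurable.

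Third, I would invoke the Kuratowski--Ryll-Nardzewski measurable selection theorem (equivalently Filippov's implicit function theorem) to extract a $\Sigma$-measurable selection of $\Phi$. As $\Phi$ is single-valued, this selection must coincide with ${\bf g}$, whence ${\bf g}:S\to\Vh$ is $\Sigma$-measurable. The bulk of the argument is conceptual rather than computational, so the only real obstacle is bookkeeping: verifying that the Carathéodory hypothesis delivers joint measurability in this setting, and that the selection theorem genuinely applies to the singleton-valued multifunction $\Phi$.

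If one prefers a more elementary route that bypasses the selection theorem, one can instead approximate ${\bf g}$ pointwise by simple measurable maps ${\bf g}_n(\alpha):=X_{k^{*}(n,\alpha)}$, where $\{X_k\}\subset\Vh$ is a fixed countable dense set and $k^{*}(n,\alpha)$ is the smallest index with $X_{k^{*}(n,\alpha)}$ lying in the compact exhaustion $K_n\subset\Vh$ and with $\|{\bf f}(\alpha,X_{k^{*}(n,\alpha)})\|_{\Vh}\le 1/n$. The delicate point in that approach is to verify that ${\bf g}_n(\alpha)\to {\bf g}(\alpha)$, which requires combining uniqueness of the zero with compactness of $K_n$ to rule out spurious near-zeros; I would only pursue this alternative if a self-contained proof were preferred to a citation of the selection theorem.
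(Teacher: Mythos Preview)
The paper does not give its own proof of this lemma: it is stated and then referred back to \cite[Lemma~3.2]{EmmrichSiska} and \cite[Lemma~3.8]{GyongyMillet}. So there is no in-paper argument to compare against; your proposal is supplying what the paper deliberately outsources.

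Your approach is correct and standard. Recasting ${\bf f}$ as a Carath\'eodory map on $S\times\Vh$ with values in the Polish space $\Vh\cong\R^{\tilde J}$, and then invoking Filippov's implicit function lemma (or equivalently Kuratowski--Ryll-Nardzewski applied to the singleton-valued $\Phi(\alpha)=\{X:{\bf f}(\alpha,X)=0\}$), is precisely how such results are proved in the cited references. One small point of care: going from ``$\Phi$ has $\Sigma\otimes\mathcal{B}(\Vh)$-measurable graph'' to ``$\Phi$ is weakly measurable'' in general requires a projection theorem and hence a complete $\sigma$-algebra, which is not assumed here. You avoid this by using the Carath\'eodory structure directly: for any closed $C\subset\Vh$ with countable dense subset $\{X_k\}$, continuity of ${\bf f}(\alpha,\cdot)$ gives
\[
\{\alpha:\Phi(\alpha)\cap C\neq\emptyset\}=\Big\{\alpha:\inf_{k}\|{\bf f}(\alpha,X_k)\|_{\Vh}=0\Big\}\in\Sigma,
\]
at least for compact $C$, and a compact exhaustion of $\Vh$ then yields weak measurability. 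This is implicit in your write-up but worth making explicit. Your alternative ``elementary'' route via near-zeros on a dense set is essentially the same computation written out by hand and is also fine, though the caveat you flag about ruling out spurious near-zeros is real and does need the uniqueness hypothesis together with local compactness of $\Vh$.
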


The next lemma guarantees the existence, uniqueness and measurability of the fully discrete numerical approximation \eqref{num_spme}.
\begin{lemma}\label{ThmExUnFullDiscSol}
For any $h>0$, $u_h^0\in L^2(\Omega, \mathcal{F}_0,\mathbb{P}; \Hh)$, and {$\tau \le \frac{1}{\lambda_B}$}
there exists a unique solution $\left\{u_h^n\right\}_{n=1}^N$ of the  numerical scheme \eqref{num_spme}.
Furthermore, the $\Vh$-valued random variables $u_h^n$ are $\mathcal{F}_{t_n}$-measurable, $n = 1,\ldots, N$. 
\end{lemma}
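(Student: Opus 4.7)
The plan is to recast \eqref{num_spme} at each time step as a nonlinear equation $T_n(u_h^n)=0$ on the finite-dimensional Hilbert space $\Vh$ endowed with the $(\cdot,\cdot)_{\Hh}$ inner product, obtain existence and uniqueness by a standard Browder--Minty argument for continuous strongly monotone operators, and propagate measurability inductively using Lemma~\ref{lem_measur}.

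Fix $n\in\{1,\dots,N\}$ and assume inductively that $u_h^{n-1}$ is $\mathcal{F}_{t_{n-1}}$-measurable (the base case $u_h^0=P_hu_0$ being $\mathcal{F}_0$-measurable by construction). Define $T_n(\omega,\cdot):\Vh\to\Vh$ through the Riesz representation in $(\Vh,(\cdot,\cdot)_{\Hh})$ by
\[ (T_n(\omega,u),v_h)_{\Hh} := (u-u_h^{n-1}(\omega),v_h)_{\Hh} + \tau\dpairing{Au}{v_h}{\Vv} - \tau\dpairing{b^n(\omega)}{v_h}{\Vv} - (\sigma^r(u_h^{n-1}(\omega))\Delta_n\WW(\omega),v_h)_{\Hh} \]
for every $v_h\in\Vh$, so that \eqref{num_spme} is equivalent to $T_n(\omega,u_h^n(\omega))=0$. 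Hemi-continuity (Assumption~\ref{AssA}(i)), boundedness (Assumption~\ref{AssA}(iv)) and monotonicity (Assumption~\ref{AssA}(ii)) of $A$ imply that $A:\Vv\to\dual{\Vv}$ is demi-continuous, and when restricted to the finite-dimensional subspace $\Vh$ (with all norms equivalent) this gives continuity of $T_n(\omega,\cdot)$. For $u,w\in\Vh$, Assumption~\ref{AssA}(ii) yields the strong monotonicity estimate
\[ (T_n(\omega,u)-T_n(\omega,w),u-w)_{\Hh} = \|u-w\|_{\Hh}^2 + \tau\dpairing{Au-Aw}{u-w}{\Vv} \ge \bigl(1-\tfrac{\tau\lambda_B}{2}\bigr)\|u-w\|_{\Hh}^2 \ge \tfrac{1}{2}\|u-w\|_{\Hh}^2, \]
where we used the discarded nonnegative term $\tfrac{\tau}{2}\|\sigma(u)-\sigma(w)\|_{\HS{\Kk}{\Hh}}^2$ and the hypothesis $\tau\le 1/\lambda_B$.

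Strong monotonicity at $w=0$ gives $(T_n(\omega,u),u)_{\Hh}\ge \tfrac{1}{2}\|u\|_{\Hh}^2-\|T_n(\omega,0)\|_{\Hh}\|u\|_{\Hh}$, whence $T_n(\omega,\cdot)$ is coercive; on the finite-dimensional Hilbert space $\Vh$, a continuous strongly monotone coercive map is a homeomorphism (a routine consequence of Brouwer's fixed-point theorem, cf.\ the standard monotone-operator theory used in \cite{KR_SEE}), so there exists a unique $u_h^n(\omega)\in\Vh$ with $T_n(\omega,u_h^n(\omega))=0$. For measurability, apply Lemma~\ref{lem_measur} with $\mathbf{f}(\omega,u):=T_n(\omega,u)$: continuity in $u$ follows from the previous paragraph, while $\mathcal{F}_{t_n}$-measurability in $\omega$ follows from the inductive hypothesis on $u_h^{n-1}$ together with $\mathcal{F}_{t_n}$-measurability of $\Delta_n\WW$ and $b^n$ and continuity of $\sigma^r$. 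The lemma then furnishes an $\mathcal{F}_{t_n}$-measurable $\Vh$-valued solution $u_h^n$, closing the induction. The only mildly non-routine ingredient is the continuity claim for $T_n(\omega,\cdot)$, which is a standard consequence of demi-continuity in finite dimensions and poses no genuine obstacle.
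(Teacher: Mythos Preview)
Your proof is correct and follows the same overall structure as the paper (induction on $n$, pointwise-in-$\omega$ solvability via Brouwer-type arguments, uniqueness from monotonicity, measurability via Lemma~\ref{lem_measur}). The one genuine difference lies in the existence step.

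The paper obtains existence by invoking the \emph{coercivity} Assumption~\ref{AssA}(iii): it bounds $\langle \mathbf{h}_\omega(U),U\rangle_{\Vh}$ from below using $\dpairing{AU}{U}{\Vv}\ge \mu\|U\|_{\Vv}^p - C$, so that on a sufficiently large sphere $\|U\|_{\Vv}=R_\omega$ the sign condition for the Brouwer-type lemma (Temam, Ch.~II, Lemma~1.4) is met. You instead extract existence directly from the \emph{strong monotonicity} you already established from Assumption~\ref{AssA}(ii): the inequality $(T_n(\omega,u),u)_{\Hh}\ge \tfrac12\|u\|_{\Hh}^2-\|T_n(\omega,0)\|_{\Hh}\|u\|_{\Hh}$ gives coercivity in the $\Hh$-norm, and then continuity plus strong monotonicity yield surjectivity on the finite-dimensional space. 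Your route is slightly more economical in that it does not use Assumption~\ref{AssA}(iii) at all for this lemma; the paper's route, on the other hand, makes the role of the $p$-growth structure of $A$ visible already at the discrete level. Both lead to the same conclusion, and your uniqueness constant $(1-\tfrac{\tau\lambda_B}{2})\ge\tfrac12$ in fact covers the borderline case $\tau=1/\lambda_B$ cleanly, whereas the paper's displayed bound $(1-\lambda_B\tau)$ would seem to need the strict inequality.
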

\begin{proof}

We assume that for $u_h^0\in L^2(\Omega, \mathcal{F}_0,\mathbb{P}; \Hh)$ there exist  $\Vh$-valued random variables $\big\{u^j_h \big\}_{j=1}^{n-1}$ that satisfy \eqref{num_spme} and that $u_h^j$ are $\mathcal{F}_{t_j}$-measurable for $j = 1,\ldots, n-1$. 
We show the existence of $\Vh$-valued $u_h^n$, that satisfies \eqref{num_spme} and is $\mathcal{F}_{t_n}$-measurable.


For each $\omega\in \Omega$ the scheme \eqref{num_spme} defines a canonical mapping ${\bf h}_\omega:\mathbb{V}_h\rightarrow \mathbb{V}_h$ 
for which it holds ${\bf h}_\omega(u_h^{n}(\omega))\equiv 0$.
Consequently for $U\in\Vh$ we write
\begin{align*}
\langle{\bf h}_\omega(U), U\rangle_{\mathbb{V}_h} :=\,& \frac{1}{\tau} (U-u_h^{n-1}(\omega),U)_{\Hh} + \dpairing{A(U)}{U}{\Vv} \\
&\quad -\dpairing{b^n(\omega)}{U}{\Vv}-\left(\sigma^r(u_h^{n-1}(\omega))\frac{\Delta_n \WW(\omega)}{\tau},U\right)_{\Hh}\,.
\end{align*}
We note that
\begin{align*}
(U-u_h^{n-1}(\omega),U)_{\Hh} 
\ge \|U\|_{\Hh}^2- C\|u_h^{n-1}(\omega)\|_{\Hh} \|U\|_{\Vv}\,.
\end{align*}
Hence, using the coercivity Assumption~\ref{AssA}~iii) along with the embedding $\Vv\hookrightarrow {\Hh}$  we obtain
\begin{align*}
\langle {\bf h}_\omega(U), U \rangle_{\mathbb{V}_h}
&\ge \|U\|_{\Vv} \Bigg( \mu \|U\|_{\Vv}^{p-1}- \frac{C}{\tau}\|u_h^{n-1}(\omega)\|_{\Hh} 
   -C\left\|\sigma(u_h^{n-1}(\omega))\frac{\Delta_n \WW(\omega)}{\tau}\right\|_{\Hh}\Bigg)
\\
&\quad +\left(\frac{1}{\tau}-\lambda_A\right) \|U\|_{\Hh}^2+\frac{1}{2}\|\sigma(U)\|_{\HS{\Kk}{\Hh}}^2 - C(\lambda_A, \D, b^n)\,.
\end{align*}

We choose $R_\omega\geq C(\lambda_B, \D, b^n) > 0$ such that
\begin{align*}
 \mu R_\omega^{p-1}&- \frac{C}{\tau}\|u_h^{n-1}(\omega)\|_{\Hh} 
 -C\left\|\sigma(u_h^{n-1}(\omega))\frac{\Delta_n \WW(\omega)}{\tau}\right\|_{\Hh}\ge 1\,.
\end{align*}
Since $(1/\tau -\lambda_B) \ge 0$,
we get for $\|U\|_{\Vv} = R_\omega$ that
\begin{align*}
\langle {\bf h}_\omega(U), U\rangle_{\mathbb{V}_h} \geq 0\,.
\end{align*}
Consequently, for each $\omega\in \Omega$ the existence of $u_h^n(\omega)\in \Vh$ that satisfies \eqref{num_spme}
follows by the Brouwer's fixed point theorem \cite[Ch. II, Lemma 1.4]{Temam}.

To show uniqueness we consider $U$, $\tilde{U}\in \Vh$, such that ${\bf h}_\omega(U)={\bf h}_\omega(\tilde{U})\equiv 0$ and obtain by the monotonicity Assumption~\ref{AssA}~ii) that
\begin{align*}
0 &= \tau \langle {\bf h}_\omega(U)-{\bf h}_\omega(\tilde{U}), U - \tilde{U} \rangle_{\Vh}
=\| U-\tilde{U}\|_{\Hh}^2 +\tau \dpairing{A(U)-A(\tilde{U})}{U - \tilde{U}}{\Vv}\\
&\ge (1-\lambda_B\tau)\| U - \tilde{U}\|_{\Hh}^2 \ge 0,
\end{align*}
which yields the uniqueness of the discrete solution for $\tau \lambda_B < 1$.

Finally, the $\mathcal{F}_{t_n}$-measurability of the $u_h^n$ follows by Lemma~\ref{lem_measur}

\end{proof}

{Under a slightly stronger assumption on $\tau$ we obtain the following stability Lemma.}
\begin{lemma}\label{lemma_apriori_spme}
For $\tau\le \frac{1}{2(1+\lambda_B)}$
there exist constants $\mu >0$, $C\geq 0$ 
such that for $n=1,\ldots,N$ it holds
\begin{align}
&\E{\|u_h^n\|_{\Hh}^2 +\mu\tau\sum_{j=1}^n \|u_h^j\|_{\Vv}^p}\nonumber \le C\,,
\end{align}
and
$$
\E{\sum_{j=1}^n \tau \|Au_h^j\|_{\dual{\Vv}}^{p'} }\leq C\,.
$$
\end{lemma}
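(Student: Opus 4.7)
The natural strategy is to test the scheme \eqref{num_spme} with $v_h = u_h^n$ and exploit the discrete polarization identity
$$(u_h^n - u_h^{n-1}, u_h^n)_{\Hh} = \tfrac{1}{2}\bigl(\|u_h^n\|_{\Hh}^2 - \|u_h^{n-1}\|_{\Hh}^2 + \|u_h^n - u_h^{n-1}\|_{\Hh}^2\bigr),$$
together with the coercivity Assumption~\ref{AssA}~iii) applied to $v = u_h^n$. This produces $\tau\mu\|u_h^n\|_{\Vv}^p$ and $\tfrac{\tau}{2}\|\sigma(u_h^n)\|_{\HS{\Kk}{\Hh}}^2$ on the left, modulo the harmless terms $\tau\lambda_A\|u_h^n\|_{\Hh}^2 + \tau(\lambda|\D| + \kappa_\sigma)$ on the right. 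The drift contribution $\tau\dpairing{b^n}{u_h^n}{\Vv}$ is absorbed by Young's inequality against $\tfrac{\tau\mu}{2}\|u_h^n\|_{\Vv}^p$; since $f,g\in L^\infty$, the remainder $C\tau\|b^n\|_{\dual{\Vv}}^{p'}$ is uniformly bounded.

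The key observation for the stochastic term is to decompose $u_h^n = u_h^{n-1} + (u_h^n - u_h^{n-1})$, so that
$$\bigl(\sigma^r(u_h^{n-1})\Delta_n\WW, u_h^n\bigr)_{\Hh} = \bigl(\sigma^r(u_h^{n-1})\Delta_n\WW, u_h^{n-1}\bigr)_{\Hh} + \bigl(\sigma^r(u_h^{n-1})\Delta_n\WW, u_h^n - u_h^{n-1}\bigr)_{\Hh}.$$
The first summand vanishes in expectation by the $\mathcal{F}_{t_{n-1}}$-measurability of $u_h^{n-1}$ and independence of $\Delta_n\WW$, while Young's inequality applied to the second yields $\tfrac{1}{2}\|u_h^n - u_h^{n-1}\|_{\Hh}^2 + \tfrac{1}{2}\|\sigma^r(u_h^{n-1})\Delta_n\WW\|_{\Hh}^2$. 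The first half cancels exactly the discrete-time ``energy" $\tfrac{1}{2}\|u_h^n - u_h^{n-1}\|_{\Hh}^2$ coming from the polarization identity, and Itô isometry plus the contractivity $\|\sigma^r\|_{\HS{\Kk}{\Hh}}\le \|\sigma\|_{\HS{\Kk}{\Hh}}$ controls the second by $\tfrac{\tau}{2}\mathbb{E}\|\sigma(u_h^{n-1})\|_{\HS{\Kk}{\Hh}}^2$.

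Summing from $j=1$ to $n$ and taking expectation, the convention $\Delta_1\beta_i \equiv 0$ is precisely what makes the $\sigma$-sums telescope favorably: the right-hand side accumulates $\tfrac{\tau}{2}\sum_{j=2}^n\mathbb{E}\|\sigma(u_h^{j-1})\|_{\HS{\Kk}{\Hh}}^2 = \tfrac{\tau}{2}\sum_{j=1}^{n-1}\mathbb{E}\|\sigma(u_h^j)\|_{\HS{\Kk}{\Hh}}^2$, which is strictly smaller than the $\tfrac{\tau}{2}\sum_{j=1}^n\mathbb{E}\|\sigma(u_h^j)\|_{\HS{\Kk}{\Hh}}^2$ present on the left-hand side, so the entire $\sigma$ block can simply be dropped as non-negative. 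No bound on $\|\sigma(u_h^0)\|_{\HS{\Kk}{\Hh}}$ is needed---this is the main subtlety, and the role of the $\Delta_1\beta_i=0$ convention is exactly to finesse it. What remains is
$$\tfrac{1}{2}\mathbb{E}\|u_h^n\|_{\Hh}^2 + \tfrac{\tau\mu}{2}\sum_{j=1}^n\mathbb{E}\|u_h^j\|_{\Vv}^p \le \tfrac{1}{2}\mathbb{E}\|u_h^0\|_{\Hh}^2 + \lambda_A\tau\sum_{j=1}^n\mathbb{E}\|u_h^j\|_{\Hh}^2 + CT,$$
and the discrete Gronwall lemma (valid once $\tau \le 1/(2(1+\lambda_B))$ ensures the coefficient of $\mathbb{E}\|u_h^n\|_{\Hh}^2$ on the left dominates the one on the right after absorption) closes the first estimate.

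The second estimate is an immediate corollary: by the boundedness Assumption~\ref{AssA}~iv) and $(p-1)p' = p$,
$$\|Au_h^j\|_{\dual{\Vv}}^{p'} \le C\bigl(\|u_h^j\|_{\Vv} + 1\bigr)^{(p-1)p'} \le C\bigl(\|u_h^j\|_{\Vv}^p + 1\bigr),$$
so that $\mathbb{E}\sum_{j=1}^n\tau\|Au_h^j\|_{\dual{\Vv}}^{p'} \le C\mathbb{E}\sum_{j=1}^n\tau\bigl(\|u_h^j\|_{\Vv}^p + 1\bigr) \le C$ by the first estimate. The main technical obstacle is the bookkeeping of the stochastic contribution and the index shift in the $\|\sigma(\cdot)\|_{\HS{\Kk}{\Hh}}^2$ sums; everything else is a standard monotone-SPDE energy computation.
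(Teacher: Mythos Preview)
Your proof is correct and follows essentially the same route as the paper: test with $v_h=u_h^j$, use the polarization identity, split the stochastic term via $u_h^j=u_h^{j-1}+(u_h^j-u_h^{j-1})$, apply It\^o isometry and coercivity, then close with discrete Gronwall; the second estimate follows from boundedness~iv) exactly as you indicate. Your handling of the index shift in the $\|\sigma(\cdot)\|_{\HS{\Kk}{\Hh}}^2$ sums---using $\Delta_1\beta_i=0$ so that the right-hand sum runs only over $j=1,\dots,n-1$ and is dominated by the left-hand sum over $j=1,\dots,n$---is in fact more explicit than the paper's own presentation, which passes over this point quickly. One cosmetic discrepancy: your Gronwall coefficient is $\lambda_A$ (from coercivity~iii)) whereas the paper records $(1+\lambda_B)$; this does not affect the argument since both are fixed data constants and any bound $\tau\le c$ suffices, but it explains why the hypothesis is phrased in terms of $\lambda_B$.
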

\begin{proof}
i) We set $v_h=u_h^j\in \Vh$ in \eqref{num_spme} with $n\equiv j$, use the identity $2(a-b,a)_{\Hh}=\|a\|_{\Hh}^2 - \|b\|_{\Hh}^2 + \|a-b\|_{\Hh}^2$ and by summing up the resulting equations for $j=1,\dots, n$ we get, that
\begin{align} \label{aux1_proof_apri}
&\|u_h^n\|_{\Hh}^2+\sum_{j=1}^n\|u_h^j-u_h^{j-1}\|_{\Hh}^2
+ 2\tau\sum_{j=1}^n\dpairing{A u_h^j}{u_h^j}{\Vv}
\nonumber
\\
& =\|u_h^0\|_{\Hh}^2+ 2\tau\sum_{j=1}^n\dpairing{b^j}{u_h^j}{\Vv}+ 2\sum_{j=1}^n\left(\sigma^r(u_h^{j-1})\Delta_j \WW ,u_h^j\right)_{\Hh}\, .
\end{align}
Using the Cauchy-Schwarz and Young's inequalities we estimate the stochastic term as
\begin{align*}
\left(\sigma^r(u_h^{j-1})\Delta_j \WW ,u_h^j\right)_{\Hh}
\le \left(\sigma^r(u_h^{j-1})\Delta_j \WW ,u_h^{j-1}\right)_{\Hh} + \frac{1}{2} \left\|\sigma^r(u_h^{j-1})\Delta_j \WW \right\|_{\Hh}^2 + {\frac{1}{2}} \| u_h^j- u_h^{j-1} \|_{\Hh}^2\,.
\end{align*}
On noting the independence of $\sigma^r(u_h^{j-1})$ and $\Delta_j \WW$ we estimate
\begin{align*}
\E{\left\|\sigma^r(u_h^{j-1})\Delta_j \WW \right\|_{\Hh}^2}
{=\tau \E{\|\sigma^r(u_h^{j-1})\|_{\HS{\Kk}{\Hh}}^2} \leq \tau \E{\|\sigma(u_h^{j-1})\|_{\HS{\Kk}{\Hh}}^2}}\, .
\end{align*}
Next, on recalling \eqref{def_ab}, using the boundedness of $f$, $g$ we deduce by the H\"older and Young inequalities that
\begin{align*}
&\dpairing{b^j}{u_h^j}{\Vv}\le C(p, f,g) + \frac{\mu}{2}\|u_h^j\|_{\Vv}^p\,.
\end{align*}
Hence, on recalling the coercivity Assumption~\ref{AssA}~$iii)$
and using the above inequalities we obtain after taking the
expectation in \eqref{aux1_proof_apri} that
\begin{align*}
&\E{\|u_h^n\|_{\Hh}^2  +\mu\tau\sum_{j=1}^n \|u_h^j\|_{\Vv}^p   } \\
&\le C + \E{\|u_h^0\|_{\Hh}^2}  +\tau(1+\lambda_B)\E{\sum_{j=1}^n\|u_h^j\|_{\Hh}^2 }\, .
\end{align*}
The first statement of the Lemma then follows after an application of the discrete Gronwall lemma for $\tau(1+\lambda_B)\leq \frac{1}{2}$.

ii) 
For the second estimate we use the boundedness Assumption~\ref{AssA}~$iv)$, $p' = \frac{p}{p-1}$ and obtain that
$$
\|Au_j^n\|_{\dual{\Vv}}^{\frac{p}{p-1}} \leq C_p(\|v\|_{\Vv}^p + 1)\,.
$$
Hence the second estimate follows by part~$i)$ of the proof.
\end{proof}

\begin{rem}
The assumption on the step-size $\tau$ in the above Lemma (which is required for the application of the discrete Gronwall lemma) is not too restrictive.
For instance, for the stochastic porous media equation \eqref{spme0}
with $\sigma(u)=u$ one may deduce for the constants in Assumption~\ref{AssA}, \eqref{lambdab}
that $\lambda_1=\lambda_2=0$, $\lambda_3=1$, $\lambda_B=2$. Consequently, we only require a mild condition $\tau\le \frac{1}{2(1+2)}=\frac{1}{6}$.
\end{rem}

\section{Convergence of the numerical approximation}\label{sec_conv}

Given the temporal partition $\{t_n\}_{n=0}^N$ with associated discrete random variables $\{u_h^n\}_ {n = 0}^N$
we define the piecewise constant time-interpolants for $t\in[0,T] $ as follows:
\begin{equation}\label{def_u_htau}
\utau(0)=u_h^1,\quad \utau(t)=u_h^n\qquad \text{ for }t\in (t_{n-1},t_n]
\end{equation}
and
\begin{align}\label{def_u_htau_minus}
 \utau^-(t)&= 0\quad\text{ for }t\in [0,t_1)=[0,\tau),\quad 
 \utau^-(t)=u_h^{n-1}\quad \text{ for }t\in [t_{n-1},t_n),\\
\utau^-(T)&=u_h^N.\nonumber
\end{align}
We note that the interpolant $\utau^-$ is $(\mathcal{F}_t)_{t\in [0,T]}$ adapted by Lemma~\ref{ThmExUnFullDiscSol}.

On recalling \eqref{num_spme_int}  we note that
the numerical scheme can be restated in terms of the above interpolants, i.e., it holds $\mathbb{P}$-a.s. that
\begin{align}
&\left(\utau(t),\vh\right)_{\Hh}+ \int_{0}^{\theta_\tau^+(t)}\dpairing{A \utau(s) -b_\tau(s)}{\vh}{\Vv} \d s\nonumber \\
&\hspace{1cm}= \left(u_h^0,\vh\right)_{\Hh} +  \int_{\tau}^{\theta_\tau^+(t)}\left(\sigma^r(\utau^-(s))\d \WW(s) ,\vh\right)_{\Hh}
\qquad \text{for all } t \in (0,T),\,\,\forall v \in \Vh\,,
\label{NumScheme_t}
\end{align}
where 
\begin{equation}
\theta_\tau^+(0):=0,\quad  \theta_\tau^+(t):=t_n \qquad\text{ for }t\in(t_{n-1},t_n],\quad n=1,\ldots,N\,.
\end{equation}

As a consequence of Lemma~\ref{lemma_apriori_spme} and Assumption~1 the time interpolants from \eqref{def_u_htau} and \eqref{def_u_htau_minus} satisfy the following a~priori estimates.
\begin{corollar}\label{corollary_boundedness_spme}
For any $h >0$ and (sufficiently small) $\tau>0$ it holds that
\begin{small}
\begin{align*}
i)\ & \sup_{t\in [0,T]} \E{\|\utau^-(t)\|_{\Hh}^2}\le C, &&ii)\ \sup_{t\in [0,T]} \E{\|\utau(t)\|_{\Hh}^2}\le C,\\
iii)\ &   \E{\int_0^T\|\utau^-(t)\|_{\Vv}^p \d t}\le C, && iv)\ \E{\int_0^T \|\utau(t)\|_{\Vv}^p \d t}\le C,\\
v)\ & \E{\int_0^T\|A\utau^-(t)\|_{\dual{\Vv}}^{p'}\d t}\le C, && vi)\ \E{\int_0^T\|A\utau(t)\|_{\dual{\Vv}}^{p'}\d t}\le C,
\end{align*}
and
\begin{align*}
vii)\ & \E{\int_0^T \|\sigma(\utau^-(t))\|_{\HS{\Kk}{\Hh}}^2\d t}\le C, \\
viii)\ & \E{\int_0^T \|\sigma(\utau(t))\|_{\HS{\Kk}{\Hh}}^2\d t}\le C,
\end{align*}
\end{small}
where $C> 0$ is a constant that only depends on the data of the problem.
\end{corollar}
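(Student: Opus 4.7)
The plan is to transfer the time-discrete bounds of Lemma~\ref{lemma_apriori_spme} to the piecewise-constant interpolants by writing the time integrals as Riemann sums and taking suprema index-wise. First, I would observe that by the definitions \eqref{def_u_htau} and \eqref{def_u_htau_minus}, for any $\phi\geq 0$,
\begin{equation*}
\int_0^T \phi(\utau(t))\,\d t = \tau \sum_{j=1}^N \phi(u_h^j), \qquad \int_0^T \phi(\utau^-(t))\,\d t = \tau \sum_{j=0}^{N-1} \phi(u_h^j),
\end{equation*}
and
\begin{equation*}
\sup_{t\in[0,T]} \E{\|\utau(t)\|_{\Hh}^2} = \max_{1\le n\le N} \E{\|u_h^n\|_{\Hh}^2}, \qquad \sup_{t\in[0,T]} \E{\|\utau^-(t)\|_{\Hh}^2} = \max_{0\le n\le N} \E{\|u_h^n\|_{\Hh}^2},
\end{equation*}
where in the last identity I use that $\utau^-$ equals $0$ on $[0,\tau)$, $u_h^{n-1}$ on $[t_{n-1},t_n)$, and $u_h^N$ at $T$. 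The bounds $i)$--$iv)$ and $vi)$ are then immediate consequences of the two estimates in Lemma~\ref{lemma_apriori_spme}; bound $v)$ follows similarly since the $t\in[0,\tau)$ contribution is $\tau\|A(0)\|_{\dual{\Vv}}^{p'}$, finite by Assumption~\ref{AssA}~$iv)$ applied at $v=0$.

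For the estimates $vii)$ and $viii)$ on $\sigma$, the key is to invoke the coercivity Assumption~\ref{AssA}~$iii)$ in the reverse direction. Rearranging it gives
\begin{equation*}
\tfrac{1}{2}\|\sigma(v)\|_{\HS{\Kk}{\Hh}}^2 \le \dpairing{A v}{v}{\Vv} + \lambda_A\|v\|_{\Hh}^2 + \lambda|\D| + \kappa_\sigma,
\end{equation*}
where the non-positive term $-\mu\|v\|_{\Vv}^p$ has been dropped. Combining this with the duality estimate $\dpairing{Av}{v}{\Vv}\le \|Av\|_{\dual{\Vv}}\|v\|_{\Vv}$ and Young's inequality (with exponents $p',p$) yields
\begin{equation*}
\|\sigma(v)\|_{\HS{\Kk}{\Hh}}^2 \le C\bigl(1+\|Av\|_{\dual{\Vv}}^{p'}+\|v\|_{\Vv}^p+\|v\|_{\Hh}^2\bigr).
\end{equation*}
Integrating in time, taking expectation, and using the already-established bounds $i)$--$vi)$ then delivers $vii)$ and $viii)$.

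There is no real obstacle here; the only minor point to watch is the different index shift in $\utau^-$ (and the initial segment $[0,\tau)$ where $\utau^-\equiv 0$), which contributes only the term $\|u_h^0\|_{\Hh}^2$, bounded by assumption on $u_0$, and the constant $\|\sigma(0)\|_{\HS{\Kk}{\Hh}}^2$, bounded by the coercivity estimate at $v=0$. Thus all eight bounds reduce, up to harmless boundary terms, to Lemma~\ref{lemma_apriori_spme}.
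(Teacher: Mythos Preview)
Your proposal is correct and matches the paper's approach: the paper states the corollary without a written proof, simply noting it follows ``as a consequence of Lemma~\ref{lemma_apriori_spme} and Assumption~\ref{AssA}'', which is exactly the route you take. One minor slip: your displayed Riemann-sum identity for $\utau^-$ should read $\tau\phi(0)+\tau\sum_{j=1}^{N-1}\phi(u_h^j)$ rather than $\tau\sum_{j=0}^{N-1}\phi(u_h^j)$, since $\utau^-\equiv 0$ (not $u_h^0$) on $[0,\tau)$; you acknowledge this in your final paragraph, so the argument itself is unaffected.
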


\begin{comment}
The following Lemma can be found in \cite{GyongyMillet} and \cite[Lemma 4.2]{EmmrichSiska2} together with a proof.
\begin{lemma}\label{lemma_convergence_pw_const}
Let $X$ be a separable and reflexive Banach space and let $1< q< \infty$. Consider $\left( (x_\ell^n)_{n=0}^{N_\ell}\right)_{\ell\in\mathbb{N}}$ with $x_\ell^n\in L^q(\Omega;X)$ for all $n=0,1,\ldots,N_\ell$ and $\ell\in \mathbb{N}$. Let the piecewise-constant processes $\overline{x}_\ell$ and $\overline{x}_{\ell}^-$ be given by $\overline{x}_\ell(t_n)=\overline{x}_\ell^-(t_n)=x_\ell^n$ and
$$\overline{x}_\ell(t) = x_\ell^n \text{ if } t\in (t_{n-1},t_n)\quad \text{ and }\quad \overline{x}_\ell^-(t)= x_\ell^{n-1} \text{ if } t\in (t_{n-1},t_n),$$
for $n = 1\ldots,N_\ell$, $\ell\in \mathbb{N}$.\\
Assume that $\left( \overline{x}_\ell \right)_{\ell\in\mathbb{N}}$ and $\left( \overline{x}_\ell^- \right)_{\ell\in\mathbb{N}}$ are bounded in $L^q(\Omega\times (0,T);X)$, then there is a subsequence (denoted by $\ell'$) and $x$, $x^-\in$ $L^q(\Omega\times (0,T);X)$  such that
$$\overline{x}_{\ell'}\rightharpoonup x \text{ and } \overline{x}_{\ell'}^-\rightharpoonup x^- \text{ in } L^q(\Omega\times (0,T);X)$$
as $\ell'\rightarrow \infty$. Further $x= x^-$.
\end{lemma}
\end{comment}

From the a priori bounds in Corollary~\ref{corollary_boundedness_spme} we can directly deduce the following sub-convergence result.
\begin{lemma}\label{lem_weak_limits}
Let the Assumptions~\ref{AssA} hold and let $u_0\in L^2(\Omega, \mathcal{F}_0, \mathbb{P};\Hh)$.
Then there exists a subsequence $h,\tau,r$ (not relabeled) such that for $h,\tau\rightarrow 0$, $r\rightarrow \infty$ the following holds:
\begin{enumerate}[i)]
\item there is a progressively measurable $u\in L^p(\Omega\times (0,T); \Vv)$ such that 
$$
\utau^-\rightharpoonup u  \text{ and }\utau\rightharpoonup u \quad \text{ in }L^p(\Omega\times (0,T); \Vv).
$$ 
There is a $u_T \in L^2(\Omega;\Hh)$ such that 
$$
\utau^-(T)= \utau(T)\rightharpoonup u_T\quad\mathrm{in}\,\, L^2(\Omega,\Hh)\,.
$$
\item There exists a progressively measurable $a\in L^{p'}(\Omega\times(0,T);\dual{\Vv})$ such that $A\utau\rightharpoonup a$ in $L^{p'}(\Omega\times (0,T);\dual{\Vv})$.
There is a progressively measurable $\overline{\sigma} \in L^2(\Omega\times (0,T);\HS{\Kk}{\Hh})$ such that {$\sigma^{r}(\utau^-)$, $\sigma^r(\utau)$} and $\sigma(\utau)$ 
weakly converge to $\overline{\sigma}$ in $L^2(\Omega\times (0,T); \HS{\Kk}{\Hh})$.
\item for $(\d \mathbb{P} \times \d t)$-almost all $(\omega,t)\in \Omega\times(0,T)$ the following equation holds in $\dual{\Vv}$
\begin{equation}\label{eq_lim0}
u(t) = u_0 +\int_0^t b(s)-a(s) \d s +\int_0^t \overline{\sigma}(s) \d W(s),
\end{equation}
\item there is an $\Hh$-valued continuous version of $u$ (sill denoted by $u$) which satisfies \eqref{eq_lim0} and
\begin{align}\label{eq_lim_ito}
\|u(t)\|_{\Hh}^2&=\|u_0\|_{\Hh}^2 + \int_0^t \Big(2\dpairing{b(s)-a(s)}{u(s)}{\Vv} + \|\overline{\sigma}(s)\|_{\HS{\Kk}{\Hh}}^2\Big) \d s\\
&\qquad  +2\int_0^t (u(s), \overline{\sigma}(s) \d W(s))_{\Hh}.\nonumber
\end{align}
\item $u_T = u(T)$, i.e.~$\utau(T)\rightharpoonup u(T)$ in $L^2(\Omega;\Hh)$.
\end{enumerate} 
\end{lemma}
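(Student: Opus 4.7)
The plan is to invoke the a~priori bounds of Corollary~\ref{corollary_boundedness_spme}, apply Banach--Alaoglu in each of the reflexive spaces that appears, perform a diagonal extraction in the triple parameter $(h,\tau,r)$, and then pass to the limit term by term in the discrete equation \eqref{NumScheme_t}. Progressive measurability of every limit is inherited from that of the interpolants by Mazur's lemma, applied to convex combinations that converge strongly.

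For $i)$, items $iii)$ and $iv)$ of Corollary~\ref{corollary_boundedness_spme} give that $\utau$ and $\utau^-$ are bounded in $L^p(\Omega\times(0,T);\Vv)$, so both converge weakly along a subsequence; the two limits coincide because $\utau^-(\cdot+\tau)=\utau(\cdot)$ on $[0,T-\tau]$, while the interval $[0,\tau)$ contributes nothing in the limit, so the pairing $(\utau-\utau^-,\phi)$ vanishes for any smooth dual test function $\phi$. The uniform bound $\sup_{t\in[0,T]}\E{\|\utau(t)\|_\Hh^2}\le C$ and reflexivity of $L^2(\Omega;\Hh)$ supply the cluster point $u_T$. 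Part $ii)$ uses the bounds $v)$--$viii)$ in the same way, yielding $a$ and $\overline\sigma$. To see that $\sigma^r(\utau)$, $\sigma^r(\utau^-)$ and $\sigma(\utau)$ share the common weak limit, I would test against adapted $\HS{\Kk}{\Hh}$-valued processes that are eventually supported in finitely many modes of $\{\tilde e_i\}_{i\in\mathbb{N}}$, so that $\sigma^r$ acts as the identity on such test functions for $r$ large, and then combine with the time-shift argument from part $i)$.

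For $iii)$, I would test \eqref{NumScheme_t} against $v_h=R_h v$ for arbitrary $v\in\Vv$ multiplied by a bounded adapted scalar random variable, and pass to the weak limit term by term; the deterministic terms go through using the weak convergences from $i),ii)$ together with the density \eqref{vh_conv}, namely $R_h v\to v$ in $\Vv$. The main technical point is the stochastic integral: since the It\^o integral is a bounded linear isometry from the adapted subspace of $L^2(\Omega\times(0,T);\HS{\Kk}{\Hh})$ into $L^2(\Omega;\Hh)$, weak convergence of adapted integrands transfers to weak convergence of the integrals, which identifies the limit as $\int_0^t\overline\sigma\,\d W$. This yields \eqref{eq_lim0} for $(\mathrm{d}\mathbb{P}\times\mathrm{d} t)$-a.e.\ $(\omega,t)$ as an identity in $\dual{\Vv}$.

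For $iv)$, having $u\in L^p(\Omega\times(0,T);\Vv)$, $b-a\in L^{p'}(\Omega\times(0,T);\dual{\Vv})$, $\overline\sigma\in L^2(\Omega\times(0,T);\HS{\Kk}{\Hh})$, and \eqref{eq_lim0} holding in $\dual{\Vv}$, the It\^o formula for monotone stochastic evolution equations on the Gelfand triple $\Vv\hookrightarrow\Hh\hookrightarrow\dual{\Vv}$ (cf.~\cite{KR_SEE,LR15}) delivers the $\Hh$-continuous version of $u$ together with the energy identity \eqref{eq_lim_ito}. Part $v)$ is then obtained by evaluating \eqref{NumScheme_t} at $t=T$, passing to the weak limit in $L^2(\Omega;\Hh)$ against $R_h v$ and using density, and comparing the resulting identity with \eqref{eq_lim0} evaluated at $t=T$ for the continuous version, which forces $u_T=u(T)$. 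The main obstacle throughout is ensuring that all the extracted weak limits are mutually consistent across the various function spaces and that weak convergence is preserved through the stochastic integral; both are resolved by a simultaneous diagonal extraction using all the estimates of Corollary~\ref{corollary_boundedness_spme} together with the linearity and continuity of It\^o integration.
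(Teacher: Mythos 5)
Your proposal is correct and follows essentially the same route as the paper: weak compactness from the a priori bounds of Corollary~\ref{corollary_boundedness_spme}, identification of the coinciding limits of $\utau$ and $\utau^-$ via the time-shift argument (the paper cites \cite[Lemma 4.2]{EmmrichSiska2} for exactly this), passage to the limit in \eqref{NumScheme_t} against product test functions $\psi\phi_h$ with $\phi_h=R_h\phi$ using \eqref{vh_conv} and the weak--weak continuity of the It\^o integral, the It\^o formula for the square of the $\Hh$-norm from the monotone-SPDE theory for part $iv)$, and comparison at $t=T$ for part $v)$. The only detail left implicit in your ``term by term'' passage is the control of the mismatch between $\int_0^{\theta_\tau^+(t)}$ (resp.\ $\int_\tau^{\theta_\tau^+(t)}$) and $\int_0^t$, which the paper isolates as the remainders $\mathcal{R}_{1,\tauh},\dots,\mathcal{R}_{3,\tauh}$ and bounds by It\^o's isometry and the a priori estimates.
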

\begin{proof}
i) We deduce from Corollary~\ref{corollary_boundedness_spme}~iii),~iv) that $\utau^-\rightharpoonup u^-$ and $\utau\rightharpoonup u$
in $L^p(\Omega\times (0,T); \Vv)$. The limit are the same according to \cite[Lemma 4.2]{EmmrichSiska2} see also \cite[proof of Prop.~3.3]{Gyongy}.

{Item~$ii)$ of the Lemma follows from Corollary~\ref{corollary_boundedness_spme}~$vii)$ and $viii)$, the limits again coincide in $L^p(\Omega\times (0,T); \Vv)$ by the arguments from $i)$.}

To show part $iii)$ 
we consider $v = \psi \phi\in L^{\infty}(\Omega\times (0,T);\Vv)$ for {$\psi\in L^\infty(\Omega\times (0,T);\mathbb{R})$}, $\phi \in \Vv$.
We set $\vh=\psi\phi_h\in \Vh$ with $\phi_h=R_h\phi\in \Vh$ in \eqref{NumScheme_t},
integrate w.r.t. $t$ over $[0,T]$ and take the expectation to get\forben{as above suggest not to state $\omega$-dependence of $\utau(t)$ explicitly.}
\begin{align}
& \E{\int_0^T (\utau(t),v(t))_{\Hh}+\dpairing{\int_0^{t} A \utau(s)\d s}{v(t)}{\Vv}\d t}\nonumber 
\\
\label{eqh}
&= \mathbb{E}\left[\int_0^T (u_h^0,v(t))_{\Hh} +\dpairing{\int_0^{t} b_\tau(s)\d s}{v(t)}{\Vv}\right.
\\
&\quad + \left.  \left(\int_{0}^{t}\sigma^{r}(\utau^-(s))\d W(s),v(t)\right)_{\Hh} \d t  \right] \nonumber 
\\
\nonumber
&\quad  + \mathcal{R}_{1,\tauh}+\mathcal{R}_{2,\tauh}-\mathcal{R}_{3,\tauh}- \mathcal{R}_{4,\tauh}- \mathcal{R}_{5,\tauh}+\mathcal{R}_{6,\tauh}+\mathcal{R}_{7,\tauh}+\mathcal{R}_{8,\tauh}\,,
\end{align}
where 
\begin{align*}
\mathcal{R}_{1,\tauh}&:=\E{\int_0^T \dpairing{\int_t^{\theta_{\tau}^+(t)} b_\tau(s)-A\utau(s)\d s}{\vh(t)}{\Vv}\d t},\\
\mathcal{R}_{2,\tauh}&:=\E{\int_0^T\left(\int_{0}^{\tau}\sigma^r(\utau^-(s))\d W(s),\vh(t)\right)_{\Hh} \d t},\\
\mathcal{R}_{3,\tauh}&:=\E{\int_0^T\left(\int_{t}^{\theta_{\tau}^+(t)}\sigma^r(\utau^-(s))\d W(s),\vh(t)\right)_{\Hh} \d t},\\
\mathcal{R}_{4,\tauh}&:=(\utau,\vh-v)_{L^2(\Omega\times (0,T); {\Hh})},\\
\mathcal{R}_{5,\tauh}&: = \left\langle \int_0^{\cdot} A \utau(s)\d s,\vh-v \right\rangle_{L^{p'}(\Omega\times (0,T); \dual{\Vv}) \times L^{p}(\Omega\times (0,T); \Vv)},\\
\mathcal{R}_{6,\tauh}&:=(u_h^0,\vh-v)_{L^2(\Omega\times (0,T); {\Hh})},\\
\mathcal{R}_{7,\tauh}&:= \left\langle \int_0^{\cdot} b_\tau(s)\d s,\vh-v \right\rangle_{L^{p'}(\Omega\times (0,T);\dual{\Vv}) \times L^{p}(\Omega\times (0,T);\Vv)},\\
\mathcal{R}_{8,\tauh}&: = \left( \int_0^{\cdot}\sigma^r(\utau^-(s)) \d W(s),\vh-v \right)_{L^2(\Omega\times (0,T); {\Hh})}.
\end{align*}
By the boundedness of $b_\tau$ and $A\utau$ in $L^{p'}(\Omega\times (0,T);\dual{\Vv})$ and 
$\sigma(\utau^-)$ in $L^2(\Omega\times (0,T); \HS{\Kk,\Hh})$ and an application of It\^{o}'s isometry we get
$\mathcal{R}_{1,\tauh}$, $\mathcal{R}_{2,\tauh}$, $\mathcal{R}_{3,\tauh} \rightarrow 0$ for $\tauh \rightarrow 0$.

Further, the boundedness of $\utau$ in $L^2(\Omega\times (0,T);{\Hh})$ and $u_h^0$ in $L^2(\Omega;{\Hh})$ yields for $k=4,\ldots,8$ that
\begin{align*}
|\mathcal{R}_{k,\tauh}|&\le C\|v-\vh\|_{L^p(\Omega\times (0,T); \Vv)}\, .
\end{align*}
On recalling $v = \psi \phi$ and $\vh=\psi\phi_h\in \Vh$, $\phi_h=R_h\phi\in \Vh$ we deduce
by \eqref{vh_conv} that
\begin{align*}
\left\| \vh-v \right\|_{L^{p}(\Omega\times (0,T); \Vv)}&= \|\psi\|_{L^{p}(\Omega\times (0,T), \mathbb{R})}\|\phi-\phi_h\|_{\Vv} 
\rightarrow 0\qquad \mathrm{for}\,\, h\rightarrow 0.
\end{align*}
Hence, on noting Corollary~\ref{corollary_boundedness_spme} we conclude that $\mathcal{R}_{k,\tauh}\rightarrow 0$, $k=4,\dots,8$ for $h\rightarrow 0$.

Next, the weak convergence $A \utau \rightharpoonup  a$, $\sigma(\utau)\rightharpoonup  \overline{\sigma}$ implies for $h,\tau\rightarrow 0$, $r\rightarrow \infty$
\begin{align*}
&\E{\int_0^T \dpairing{\int_0^{t} A \utau(s)\d s}{v(t)}{\Vv}\d t}  \rightarrow \E{\int_0^T \dpairing{\int_0^{t} a(s) \d s}{v(t)}{\Vv}\d t},\\
& \mathbb{E}\left[\int_0^T \left(\int_{0}^{t}\sigma^{r}(\utau^-(s))\d W(s),v(t)\right)_{\Hh} \d t  \right] \rightarrow  \mathbb{E}\left[\int_0^T \left(\int_{0}^{t}\overline{\sigma}(s)\d W(s),v(t)\right)_{\Hh} \d t  \right].
\end{align*}
From the weak convergence of $\utau \rightharpoonup u$ in $L^2(\Omega\times (0,T);{\Hh})$ and the strong convergence of $u_h^0 \rightarrow u_0$ in $L^2(\Omega;{\Hh})$ 
we deduce that
\begin{align*}
\E{\int_0^T (\utau(t),v(t))_{\Hh} \d t} \rightarrow \E{\int_0^T (u(t),v(t))_{\Hh} \d t},
\intertext{and}
\E{\int_0^T (u_h^0,v(t))_{\Hh}  \d t  }\rightarrow \E{ \int_0^T(u_0,v(t))_{\Hh} \d t   } .
\end{align*}
Finally, since $b_\tau\rightarrow b$ in $L^{p'}(\Omega\times (0,T);\dual{\Vv})$ it follows that
\begin{align*}
&\E{\int_0^T \dpairing{\int_0^{t} b_\tau(s)\d s}{v(t)}{\Vv}\d t  } \rightarrow \E{\int_0^T \dpairing{\int_0^{t} b(s)\d s}{v(t)}{\Vv}\d t }.
\end{align*}
From the above convergence results we conclude, by taking $h,\tau \rightarrow 0$, $r\rightarrow\infty$ in \eqref{eqh} that
\begin{align*}
&\E{\int_0^T (u(t),v(t))_{\Hh}+\dpairing{\int_0^{t} a(s)\d s}{v(t)}{\Vv}\d t} \\
&= \E{\int_0^T (u_0,v(t))_{\Hh} +\dpairing{\int_0^{t} b(s)\d s}{v(t)}{\Vv}
+   \left(\int_{0}^{t}\overline{\sigma}(s)\d W(s),v(t)\right)_{\Hh} \d t },
\end{align*}
for all $v = \psi\phi$, $\phi\in \Vv$, which implies \eqref{eq_lim0}.

By the standard theory of monotone SPDEs, cf. \cite{KR_SEE} (or \cite{RRW07}),
part $iv)$ follows from $iii)$ by
the It\^{o} formula for the square of the $\Hh$-norm, which also implies that $u$ has an $\Hh$-valued
continuous modification (which we again denote by $u$) that satisfies \eqref{eq_lim0}.

Finally, to show $v)$ we note that $\utau(T)\rightharpoonup u_T$ by part $i)$ which together with $iii)$ implies
$$
u_T+\int_0^T a(s)\d s = u_0 +\int_0^T b(s)\d s+ \int_0^T \overline{\sigma}(s)\d W(s)\quad\mathrm{in}\,\,\mathbb{L}^{p'}\,.
$$
Since the continuous $\Hh$-valued modification of $u$ (cf. $iv)$) satisfies \eqref{eq_lim0} we may conclude that $u_T=u(T)$.
\end{proof}

The following variant of the Gronwall lemma, cf. \cite[Lemma 5.1]{EmmrichSiska2},
will be useful for the proof of the subsequent theorem.
\begin{lemma}\label{lemma_ES_51}
Let $a$ and $b$ be real-valued integrable functions such that for all $t\in [0,T]$
\begin{equation}\label{eq_ES_51}
a(t)\le a(0)+\int_0^t b(s)\d s,
\end{equation}
then for all $\lambda_B\ge 0$ and for all $t\in[0,T]$
\begin{equation}\label{eq_ES_52}
e^{-\lambda_B t} a(t)+\lambda_B\int_0^t e^{-\lambda_B s} a(s)\d s \le a(0) + \int_0^t e^{-\lambda_B s}b(s) \d s.
\end{equation}
Moreover, if equality holds in \eqref{eq_ES_51}, then equality holds in \eqref{eq_ES_52}.
\end{lemma}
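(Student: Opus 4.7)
My plan is to rewrite the weighted inequality by reducing it to the unweighted one through integration by parts against the weight $e^{-\lambda_B s}$, and then to exploit the pointwise bound $a(t) \le a(0) + \int_0^t b(s)\,\d s$ under the nonnegative measure $\lambda_B e^{-\lambda_B s}\,\d s$.

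Concretely, the first step is to introduce the absolutely continuous function
\begin{equation*}
h(t) := a(0) + \int_0^t b(s)\,\d s,
\end{equation*}
so that $h(0)=a(0)$, $h'(t)=b(t)$ for a.e.\ $t$, and, by hypothesis, $a(t)\le h(t)$ for all $t\in[0,T]$.

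Next, I would apply integration by parts to the right-hand side of \eqref{eq_ES_52}: using $h'=b$ a.e.\ and the product $e^{-\lambda_B s}h(s)$, one obtains
\begin{equation*}
\int_0^t e^{-\lambda_B s} b(s)\,\d s \;=\; e^{-\lambda_B t}h(t) - h(0) + \lambda_B\int_0^t e^{-\lambda_B s} h(s)\,\d s.
\end{equation*}
Since $h(0)=a(0)$, this rearranges to
\begin{equation*}
a(0)+\int_0^t e^{-\lambda_B s} b(s)\,\d s \;=\; e^{-\lambda_B t} h(t) + \lambda_B\int_0^t e^{-\lambda_B s} h(s)\,\d s.
\end{equation*}

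The final step is to compare this with the left-hand side of \eqref{eq_ES_52}: since $e^{-\lambda_B t}\ge 0$ and $\lambda_B e^{-\lambda_B s}\ge 0$, the pointwise bound $a(\cdot)\le h(\cdot)$ gives $e^{-\lambda_B t}a(t)\le e^{-\lambda_B t}h(t)$ and $\lambda_B e^{-\lambda_B s} a(s)\le \lambda_B e^{-\lambda_B s} h(s)$ for a.e.\ $s$. Adding these and combining with the identity above yields \eqref{eq_ES_52}. For the moreover part, equality in \eqref{eq_ES_51} means $a\equiv h$ on $[0,T]$, so every inequality used in the comparison step becomes an equality, and therefore \eqref{eq_ES_52} holds with equality.

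I do not anticipate any serious obstacle: the one point requiring minimal care is that the pointwise hypothesis is used both at the endpoint $t$ and a.e.\ in the integrand, which is legitimate since $a$ and $h$ are integrable and the weighted measures are nonnegative. Otherwise the argument is a direct integration-by-parts computation.
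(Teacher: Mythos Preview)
Your argument is correct: defining $h(t)=a(0)+\int_0^t b(s)\,\mathrm{d}s$, integrating $e^{-\lambda_B s}h'(s)$ by parts, and then comparing termwise using $a\le h$ and the nonnegativity of the weights yields \eqref{eq_ES_52}, with equality when $a\equiv h$. The paper does not supply its own proof of this lemma but simply cites \cite[Lemma~5.1]{EmmrichSiska2}; your integration-by-parts approach is the standard one and matches what one finds in that reference.
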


In the next theorem we conclude that the weak limit of the numerical approximation from Lemma~\ref{lem_weak_limits}
is the very weak solution of the equation \eqref{spme}.
\begin{theorem}[Convergence of the numerical approximation]\label{thm_convergence_to_cont_spme}
Let the Assumption~\ref{AssA} hold and let $u_0\in L^2(\Omega, \mathcal{F}_0, \mathbb{P};\Hh)$. 
Then, for $h,\tau\rightarrow0$, $r\rightarrow \infty$ 
the fully discrete solution of scheme \eqref{NumScheme_t} converges to the unique very weak solution 
$u\in L^p(\Omega\times(0,T); \Vv)\cap L^2(\Omega;C([0,T];\Hh))$ of \eqref{spme}
in the sense of Definition~\ref{def_veryweak}.
\end{theorem}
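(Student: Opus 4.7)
The plan is to invoke the Minty monotonicity trick to identify, from the weak limits already provided by Lemma~\ref{lem_weak_limits}, the nonlinear quantities as $a = Au$ and $\overline{\sigma} = \sigma(u)$. Once these identifications are in place, equation \eqref{eq_lim0} reduces to \eqref{oSPME}, so $u$ is a very weak solution, and the uniqueness mentioned in the remark after Definition~\ref{def_veryweak} upgrades the subsequential convergence of Lemma~\ref{lem_weak_limits} to convergence of the entire family $(\utau)_{\tau,h,r}$.

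The crucial preliminary step is a discrete energy estimate. Testing \eqref{num_spme} with $v_h = u_h^n$, summing in $n$, taking expectation and handling the stochastic cross-term exactly as in the proof of Lemma~\ref{lemma_apriori_spme} (Young's inequality, discrete martingale property, It\^o isometry) yields
\begin{equation*}
\mathbb{E}\|u_h^n\|_\Hh^2 + 2\tau\sum_{k=1}^n \mathbb{E}\dpairing{Au_h^k}{u_h^k}{\Vv} \le \mathbb{E}\|u_h^0\|_\Hh^2 + 2\tau\sum_{k=1}^n \mathbb{E}\dpairing{b^k}{u_h^k}{\Vv} + \tau\sum_{k=1}^n \mathbb{E}\|\sigma^r(u_h^{k-1})\|_{\HS{\Kk}{\Hh}}^2.
\end{equation*}
Rewriting in terms of the interpolants, applying the Gronwall Lemma~\ref{lemma_ES_51} with constant $\lambda_B$, invoking weak lower semi-continuity of $\|\cdot\|_\Hh$ and the weak convergences $\sigma^r(\utau^-),\sigma(\utau)\rightharpoonup\overline\sigma$ from Lemma~\ref{lem_weak_limits}(ii), and finally subtracting the It\^o identity of Lemma~\ref{lem_weak_limits}(iv) applied to $e^{-\lambda_B t}\|u(t)\|_\Hh^2$, produces the key bound
\begin{equation*}
\limsup_{\tau,h\to 0,\,r\to\infty}\mathbb{E}\int_0^T e^{-\lambda_B s}\Bigl[2\dpairing{A\utau}{\utau}{\Vv} - \|\sigma(\utau)\|_{\HS{\Kk}{\Hh}}^2 + \lambda_B\|\utau\|_\Hh^2\Bigr]\d s \le \mathbb{E}\int_0^T e^{-\lambda_B s}\Bigl[2\dpairing{a}{u}{\Vv} - \|\overline\sigma\|_{\HS{\Kk}{\Hh}}^2 + \lambda_B\|u\|_\Hh^2\Bigr]\d s.
\end{equation*}

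For the Minty step, fix any progressively measurable $v\in L^p(\Omega\times(0,T);\Vv)$. Assumption~\ref{AssA}(ii) applied pointwise and integrated against $e^{-\lambda_B s}\d s\otimes\mathbb{P}$ gives
\begin{equation*}
\mathbb{E}\int_0^T e^{-\lambda_B s}\Bigl[2\dpairing{A\utau-Av}{\utau-v}{\Vv} + \lambda_B\|\utau-v\|_\Hh^2 - \|\sigma(\utau)-\sigma(v)\|_{\HS{\Kk}{\Hh}}^2\Bigr]\d s \ge 0.
\end{equation*}
Expanding, the ``diagonal'' contributions $2\dpairing{A\utau}{\utau}{\Vv}$, $\lambda_B\|\utau\|_\Hh^2$ and $-\|\sigma(\utau)\|_{\HS{\Kk}{\Hh}}^2$ are controlled from above by the preceding $\limsup$ bound, while all remaining terms are linear in the objects $A\utau,\utau,\sigma(\utau)$ and pass to the limit by the weak convergences of Lemma~\ref{lem_weak_limits}(i)--(ii). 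The resulting limit inequality reads
\begin{equation*}
\mathbb{E}\int_0^T e^{-\lambda_B s}\Bigl[2\dpairing{a-Av}{u-v}{\Vv} + \lambda_B\|u-v\|_\Hh^2 - \|\overline\sigma-\sigma(v)\|_{\HS{\Kk}{\Hh}}^2\Bigr]\d s \ge 0.
\end{equation*}
Setting $v=u$ collapses the first two terms and forces $\overline\sigma = \sigma(u)$; inserting $v = u - \epsilon w$ for arbitrary $w\in L^p(\Omega\times(0,T);\Vv)$, dividing by $\epsilon>0$ and letting $\epsilon\to 0$ via the hemi-continuity Assumption~\ref{AssA}(i) identifies $a = Au$, which together with the previous step completes the proof.

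The main obstacle I anticipate lies in the energy passage: reconciling the three closely related but distinct quantities $\sigma^r(\utau^-)$ appearing in the discrete scheme, $\sigma(\utau)$ required for the Minty expansion, and the weak limit $\overline\sigma$. The truncation $\sigma^r\to\sigma$ is controlled using the uniform Hilbert--Schmidt bound in Corollary~\ref{corollary_boundedness_spme}(vii)--(viii), while the $\utau$ versus $\utau^-$ shift contributes an $O(\tau^{1/2})$ discrepancy in $L^2(\Omega;L^2(0,T;\HS{\Kk}{\Hh}))$ which vanishes in the limit. The Minty step itself is purely algebraic once the $\limsup$ bound is in place.
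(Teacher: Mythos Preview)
Your proposal is correct and follows essentially the same strategy as the paper: derive a discrete energy inequality, weight it by $e^{-\lambda_B s}$ via Lemma~\ref{lemma_ES_51}, combine it with the It\^o identity \eqref{eq_lim_ito} for the limit, and then run the Minty argument (first $w=u$ to identify $\overline\sigma=\sigma(u)$, then $w=u-\varepsilon z$ with hemi-continuity to identify $a=Au$). The only cosmetic difference is that the paper inserts the monotonicity inequality \eqref{lambdab} into the energy estimate \emph{before} passing to the limit, whereas you first isolate the $\limsup$ bound and then expand the Minty inequality; the two organizations are algebraically equivalent. Your anticipated difficulty about $\sigma^r(\utau^-)$ versus $\sigma(\utau)$ is handled exactly as you suggest (truncation only decreases the Hilbert--Schmidt norm, and the one-step shift contributes a term of order $\tau$ which vanishes); the paper treats this point rather loosely as well.
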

\begin{proof}
We have shown in Lemma~\ref{lem_weak_limits} that every weak limit $u$ of the numerical approximation satisfies for $t\in [0,T]$
$$
u(t) = u_0 +\int_0^t b(s)-a(s) \d s +\int_0^t \overline{\sigma}(s) \d W(s)\,.
$$
Hence, it remains to show that $a = Au$, $\overline{\sigma} = \sigma(u)$. 

Throughout the proof we use the shorthand notation $\ell := (h,\tau,r)$ and $\ell \rightarrow \infty$
stands for $h,\tau \rightarrow 0$, $r\rightarrow \infty$.
We define 
$$ \Xi_\ell(t):=\begin{cases}\|\utau(t)\|_{L^2(\Omega;\Hh)}^2&\text{if }t\in(0,T],\\
\|u_h^0\|_{L^2(\Omega;\Hh)}^2&\text{if }t=0 \,.
\end{cases}
$$
Analogously to the proof of Lemma \ref{lemma_apriori_spme}
we deduce from \eqref{aux1_proof_apri} on noting the definition of the time interpolants \eqref{NumScheme_t} that
 for any $t\in(0,T]$ it holds
\begin{align*}
\Xi_\ell(t) & \leq  \Xi_\ell(0)
\\ 
& \qquad + \mathbb{E}\bigg[\int_{0}^{t}2\dpairing{b_\tau(s)-A \utau(s)}{\utau(s)}{\Vv}  + \|\sigma^r(\utau(s))\|_{\HS{\Kk}{\Hh}}^2 \d s\bigg]  +\mathcal{R}_\ell(t)\,,
\end{align*}
with $\mathcal{R}_\ell(t)=\E{\int_{t}^{\theta_\tau^+(t)}2\dpairing{b_\tau(s)-A \utau(s)}{\utau(s)}{\Vv} + \|\sigma^r(\utau(s))\|_{\HS{\Kk}{\Hh}}^2 \d s}$.

We use Lemma \ref{lemma_ES_51} and obtain from the above inequality that
\begin{align}
&e^{-\lambda_B T}\Xi_\ell(T) \le \Xi_\ell(0)-\lambda_B \int_0^T e^{-\lambda_B s} \Xi_\ell(s)\d s \nonumber\\
&\qquad + \E{\int_{0}^{T}e^{-\lambda_B s}\left(2\dpairing{b_\tau(s)-A \utau(s)}{\utau(s)}{\Vv} + \|\sigma^r(\utau(s))\|_{\HS{\Kk}{\Hh}}^2\right) \d s}\label{eq_ES_53} \\
& \qquad +\lambda_B \int_0^T e^{-\lambda_B s} | \mathcal{R}_\ell(s)| \d s\nonumber.
\end{align}
Note that by the monotonicity property \eqref{lambdab} it holds for arbitrary $w\in L^{p}(\Omega\times (0,T);\Vv)$ that
\begin{align*}
&-2\E{\int_0^T e^{-\lambda_B s}\dpairing{A \utau(s)}{\utau(s)}{\Vv}\d s}\\
 &\le \mathbb{E}\Bigg[\int_0^T e^{-\lambda_B s} \Big(- \|\sigma(\utau(s)) -\sigma(w(s))\|_{L_2(\Kk,\Hh)}^2  +\lambda_B \|\utau(s)-w(s) \|_{\Hh}^2 \Big)\d s\Bigg] \\
&\quad -2 \mathbb{E}\Bigg[\int_0^T e^{-\lambda_B s}\Big(\dpairing{A w(s)}{\utau(s)-w(s)}{\Vv} +\dpairing{A \utau(s)}{w(s)}{\Vv}\Big) \d s\Bigg].
\end{align*}
We substitute the above inequality into \eqref{eq_ES_53} and obtain
\begin{align}
&e^{-\lambda_B T}\|\utau(T)\|_{L^2(\Omega;{\Hh})}^2 \nonumber\\
& \le \|u_{h}^0\|_{L^2(\Omega;{\Hh})}^2 + 2\E{\int_{0}^{T}e^{-\lambda_B s}\dpairing{b_\tau(s)}{\utau(s)}{\Vv} \d s}\nonumber\\
&\quad  + \mathbb{E}\Bigg[\int_0^T e^{-\lambda_B s} \Big(- \|\sigma(w(s))\|_{L_2(\Kk,\Hh)}^2 +2\left(\sigma(\utau(s)),\sigma(w(s))\right)_{L_2(\Kk,\Hh)} \nonumber\\
&\qquad \qquad + \lambda_B \|w(s) \|_{\Hh}^2 - 2\lambda_B \left(\utau(s),w(s) \right)_{\Hh} \Big)\d s\Bigg] \label{eq_ES_54}\\
&\quad -2\mathbb{E}\Bigg[\int_0^T e^{-\lambda_B s}\Big(\dpairing{A w(s)}{\utau(s)-w(s)}{\Vv}+\dpairing{A \utau(s)}{w(s)}{\Vv}\Big) \d s\Bigg]\nonumber\\
&\qquad\qquad +\lambda_B \int_0^T e^{-\lambda_B s} | \mathcal{R}_\ell(s)| \d s\,. \nonumber
\end{align}
Next, we observe that, by Corollary~\ref{corollary_boundedness_spme},
\begin{align*}
&\lambda_B \int_0^T e^{-\lambda_B } | \mathcal{R}_\ell(t)| \d t  \\
&\le \tau\lambda_B\Big(2 \left( \|b_\tau\|_{L^{p'}(\Omega\times (0,T);\dual{\Vv})}+\|A \utau\|_{L^{p'}(\Omega\times (0,T);\dual{\Vv})}\right)\|\utau\|_{L^{p}(\Omega\times (0,T); \Vv)}  \\
& \hspace{8cm}+ \|\sigma(\utau)\|_{L^2(\Omega\times (0,T);L_2(\Kk,\Hh))}^2\Big)\\
&\le C \tau\rightarrow 0\quad \text{for } \ell \rightarrow \infty \,.
\end{align*}
Hence, using the weak convergence of Lemma~\ref{lem_weak_limits}~i),~ii) we deduce from \eqref{eq_ES_54} by the lower-semicontinuity of norms that
\begin{align}
&e^{-\lambda_B T}\|u(T)\|_{L^2(\Omega;{\Hh})}^2 \le \liminf_{\ell\rightarrow\infty} e^{-\lambda_B T}\|\utau(T)\|_{L^2(\Omega;{\Hh})}^2\nonumber \\
&\le \|u_0\|_{L^2(\Omega;{\Hh})}^2 + 2\E{\int_{0}^{T}e^{-\lambda_B s}\dpairing{b(s)}{u(s)}{\Vv} \d s}\nonumber \\
&\quad  + \mathbb{E}\Bigg[\int_0^T e^{-\lambda_B s} \Big(- \|\sigma(w(s))\|_{L_2(\Kk,\Hh)}^2 +2(\overline{\sigma}(s),\sigma(w(s)))_{L_2(\Kk,\Hh)} \label{eq_ES_55} \\
&\qquad \qquad + \lambda_B \|w(s) \|_{\Hh}^2 - 2\lambda_B (u(s),w(s) )_{\Hh} \Big)\d s\Bigg]\nonumber \\
&\quad -2\mathbb{E}\Bigg[\int_0^T e^{-\lambda_B s}\Big(\dpairing{A w(s)}{u(s)-w(s)}{\Vv} +\dpairing{a(s)}{w(s)}{\Vv}\Big) \d s\Bigg].\nonumber
\end{align}

After a standard stopping argument and taking the expectation in \eqref{eq_lim_ito} we get for all $t\in [0,T]$
\begin{align*}
&\|u(t)\|_{L^2(\Omega;{\Hh})}^2= \|u_0\|_{L^2(\Omega;{\Hh})}^2 + \E{\int_{0}^{t}2\dpairing{b(s)-a(s)}{u(s)}{\Vv}+ \|\overline{\sigma}(s)\|_{\HS{\Kk}{\Hh}}^2 \d s}\nonumber.
\end{align*}
Using Lemma \ref{lemma_ES_51} we obtain from the above equality that
\begin{align}
&e^{-\lambda_B T}\|u(T)\|_{L^2(\Omega;{\Hh})}^2 =\|u_0\|_{L^2(\Omega;{\Hh})}^2- \lambda_B\E{\int_0^T e^{-\lambda_B s}\|u(s)\|_{{\Hh}}^2 \d s} \nonumber\\
&\quad +\E{\int_{0}^{T}e^{-\lambda_B s}\left(2\dpairing{b(s)-a(s)}{u(s)}{\Vv}+ \|\overline{\sigma}(s)\|_{L_2(\Kk,\Hh)}^2 \right)\d s}.\label{eq_ES_56}
\end{align}
Next, we subtract \eqref{eq_ES_56} from \eqref{eq_ES_55} and get
\begin{align*}
0
&\le \mathbb{E}\Bigg[\int_0^T e^{-\lambda_B s} \Big(- \|\sigma(w(s))-\overline{\sigma}(s)\|_{L_2(\Kk,\Hh)}^2  + \lambda_B \|w(s) - u(s)\|_{{\Hh}}^2 \Big)\d s\Bigg]\\
&\quad -2\mathbb{E}\Bigg[\int_0^T e^{-\lambda_B s}\Big(\dpairing{A w(s)}{u(s)-w(s)}{\Vv}-\dpairing{a(s)}{u(s)- w(s)}{\Vv}\Big) \d s\Bigg]\,.
\end{align*}
Consequently, it holds that
\begin{align}
&2\E{\int_0^T e^{-\lambda_B s}\dpairing{A w(s)}{u(s)-w(s)}{\Vv} \d s}\nonumber \\
&\le \mathbb{E}\Bigg[\int_0^T e^{-\lambda_B s} \Big( \lambda_B \|w(s) - u(s)\|_{{\Hh}}^2+2\dpairing{a(s)}{u(s)- w(s)}{\Vv} \Big)\d s\Bigg]\,.\label{eq_ES_58}
\end{align}
On taking  $w= u$ in \eqref{eq_ES_58} we get
$$
0\le - \mathbb{E}\Bigg[\int_0^T e^{-\lambda_B s}\|\sigma(u(s))-\overline{\sigma}(s)\|_{L_2(\Kk,\Hh)}^2\d s\Bigg]\le 0\,,
$$
which implies $\sigma(u(s))=\overline{\sigma}(s)$ in $L^2(\Omega\times (0,T);L_2(\Kk,\Hh))$.

Next we choose  $w = u -\varepsilon z$ \eqref{eq_ES_58} with $z\in L^p(\Omega\times (0,T); \Vv)$, $\varepsilon\in(0,1)$
\begin{align*}
& \E{\int_0^T e^{-\lambda_B s}\dpairing{A (u(s) -\varepsilon z(s))}{z(s)}{\Vv} \d s}\\
&\le \mathbb{E}\Bigg[\int_0^T e^{-\lambda_B s} \Big( \frac{1}{2}\varepsilon\lambda_B \| z(s)\|_{{\Hh}}^2+\dpairing{a(s)}{z(s)}{\Vv} \Big)\d s\Bigg]\, ,
\end{align*}
and obtainusing Assumption~\ref{AssA}~i) by the Lebesgue dominated convergence for $\varepsilon\rightarrow 0$ that
$$
\E{\int_0^T e^{-\lambda_B s}\dpairing{A u(s)}{z(s)}{\Vv} \d s}\le  \mathbb{E}\Bigg[\int_0^T e^{-\lambda_B s} \dpairing{a(s)}{z(s)}{\Vv} \d s\Bigg]\,.
$$
This implies that $a = Au$, since $z\in L^p(\Omega\times (0,T);\Vv)$ is arbitrary.


Finally, we conclude by the uniqueness of the very weak solution, that the whole sequence converges to the same limit $u$.

\end{proof}

\section{Practical finite element approximation in $\Lp$} \label{sec_fem}

A natural approach is to construct the numerical solution $u_{h}^n\in \mathbb{V}_h\subset \mathbb{L}^p$, $n=0,\dots,N$
using a finite element space $\mathbb{V}_h$ consisting of piecewise constant functions on a given partition of the domain $\D$ with a given mesh size $h$.
However, the piecewise constant finite element approximation of the very weak formulation is impractical since
the resulting finite element matrix associated with the $\Hh$-scalar product 
$(\cdot,\cdot)_{\Hh} = (\cdot, \mDeltaD^{-1} \cdot)_{\Ltwo} = (\nabla\mDeltaD^{-1}\cdot, \nabla\mDeltaD^{-1} \cdot)_{\Ltwo}$ in the discrete very weak formulation 
\eqref{num_spme} will be dense. Furthermore, the evaluation of the $\Hh$-inner product requires the evaluation of the inverse Laplace operator $\mDeltaD^{-1}$,
which does not have an explicit formula in general.
This is a consequence of the fact that the inverse Laplacian of the characteristic function $\chi_{\mathcal{T}}$ for some subset $\mathcal{T}\subset \D$
does not have compact support in $\D$, i.e., in general $\mathrm{supp}\{\mDeltaD^{-1}\chi_{\mathcal{T}}\} \equiv \D$. A further complication lies in the fact
that there is no explicit formula available for $\mDeltaD^{-1}\chi_{T}$, in general.

Below, we discuss the construction of a finite element basis $\{\phi_i \}_{i=1}^{\tilde{J}}$ of $\mathbb{V}_h$ for $d\geq 1$ on rectangular domains
with the property that $\psi_i := \mDeltaD^{-1}\phi_i$ can be computed explicitly and has local support in $\D$ for $i=1,\dots,\tilde{J}$.

\subsection{Finite-element basis in $d=1$}
We summarize the  finite element method proposed in \cite{EmmrichSiska} for $\D \subset \mathbb{R}^1$.
For the domain $\D = (-L, L)$, where $L>0$ we introduce a partition into disjoint open intervals $\{(\bfx_{i-1},\bfx_i)\}_{i=1}^J$, $\bfx_0=-L$, $\bfx_J=L$
such that $\overline{\D} = \cup_{i=1}^J[\bfx_{i-1},\bfx_i]$
and denote $\chi_{I}$ to be the characteristic function of the interval $I$.
We then set $\Vh= \mathrm{span} \{\phi_i,\,\,i=1,\dots,J\}\subset \Lp$
where $\phi_i:[-L,L]\rightarrow\mathbb{R}$ are defined as 
\begin{align}\label{phi_1}
\phi_1(x)&=\frac{3}{2}\chi_{[\bfx_0,\bfx_1]}(x)-\frac{1}{2}\chi_{(\bfx_1,\bfx_2]}(x)\,,
\\ \label{phi_i}
\phi_i(x)&=-\frac{1}{2}\chi_{(\bfx_{i-2},\bfx_{i-1}]}(x)+\chi_{(\bfx_{i-1},\bfx_{i}]}(x)-\frac{1}{2}\chi_{(\bfx_{i},\bfx_{i+1}]}(x)\,,
\\ \label{phi_J}
\phi_J(x)&=-\frac{1}{2}\chi_{(\bfx_{J-2},\bfx_{J-1}]}(x)+\frac{3}{2}\chi_{(\bfx_{J-1},\bfx_{J}]}(x)\,.
\end{align}
for any $x\in (-L,L)$.

Note that the proposed approximation is equivalent to
a piecewise constant approximation, i.e., $\Vh\equiv\mathrm{span}\{\phi_i\}=\mathrm{span}\{\chi_{(\bfx_{i-1},\bfx_{i}]},\,\, i=1,\dots,J\}$. 
The proposed basis has the useful property that $\psi_i:=\mDeltaD^{-1}\phi_i$  (with $\mDeltaD^{-1}$ defined on $(-L,L)$) admits an explicit representation for all $i=1,\ldots,J$
which has a small support in $\D$. It can be verified by direct calculation that
\begin{equation}\label{psi_1}
\psi_{1}(x)=\begin{cases}
-\frac{3}{4}(x-\bfx_0)^2+h(x-\bfx_0)&\text{if }x\in [\bfx_0,\bfx_1],\\
\frac{1}{4}(x-\bfx_1)^2-\frac{h}{2}(x-\bfx_1)+\frac{h^2}{4}&\text{if }x\in (\bfx_1,\bfx_2],\\
0&\text{otherwise},
\end{cases}
\end{equation}
further
\begin{equation}\label{psi_i}
\psi_{i}(x)=\begin{cases}
\frac{1}{4}(x-\bfx_{i-2})^2&\text{if }x\in (\bfx_{i-2},\bfx_{i-1}],\\
-\frac{1}{2}(x-\bfx_{i-1}-\frac{h}{2})^2+\frac{3h^2}{8}&\text{if }x\in (\bfx_{i-1},\bfx_i],\\
\frac{1}{4}(\bfx_{i+1}-x)^2&\text{if }x\in (\bfx_{i},\bfx_{i+1}]\\
0&\text{otherwise,}
\end{cases}
\end{equation}
for $i=2,\ldots,J-1$, and
\begin{equation}\label{psi_J}
\psi_{J}(x)=
\begin{cases}
\frac{1}{4}(\bfx_{J-1}-x)^2-\frac{h}{2}(\bfx_{J-1}-x)+\frac{h^2}{4}& \text{if }x\in (\bfx_{J-2},\bfx_{J-1}]\\ 
-\frac{3}{4}(\bfx_J-x)^2+h(\bfx_J-x)&\text{if }x\in (\bfx_{J-1},\bfx_J]\\ 
0&\text{otherwise,}
\end{cases}
\end{equation}
We note that both basis have a small support in $\D$, i.e., $\mathrm{supp}(\phi_j) = \mathrm{supp}(\psi_j)$, $j=1,\dots,J$ with 
$$
\mathrm{supp}(\phi_j)=\begin{cases} [\bfx_0,\bfx_2]&\text{if }i=1\,, 
\\ 
[\bfx_{j-2},\bfx_{j+1}]& \text{for }j=2,\ldots,J-1\,,
\\
 [\bfx_{J-2},\bfx_J]&\text{if }j=J \,.
\end{cases}
$$
Consequently, the ''mass'' matrix $$\mathbf{M}_h = \{m_{ij}\}_{i,j=1}^J := \{(\phi_j, \mDeltaD^{-1} \phi_i)_{\Ltwo}\}_{i,j=1}^J\equiv \{(\phi_j, \psi_i)_{\Ltwo}\}_{i,j=1}^J$$ which corresponds to the $\Hh$-inner product in the numerical scheme \eqref{num_spme} will be sparse. 

\subsection{Spatial discretization in higher dimensions}

We consider $\D =(-L,L)^d$ for some $L>0$, $d=1,2,\dots$, and denote $x=(x_1,\dots,x_d)^T\in \D$. 
Given $m\in\mathbb{N}$ we set $J:=2^m$ and consider a uniform partition of $\D$ 
with mesh size $h=\frac{2L}{J}$ into $\tilde{J}:=J^d$ rectangular
subdomains $\D_\ui:=(\bfx_{i_1-1},\bfx_{i_1}]\times(\bfx_{i_2-1},\bfx_{i_2}]\times \cdots \times (\bfx_{i_d-1},\bfx_{i_d}]$
for a multiindex $\ui\in \{1,\ldots,J\}^d$, where $\ui:=(i_1,\ldots,i_d)$, $i_k=1,\ldots,J$ for $k=1,\ldots,d$,
and $\bfx_{i_k}:=-L+i_kh$.
We denote the above partition of the domain $\D$ as $\mathcal{T}_h = \{\D_\ui,\,\,\ui\in \{1,\ldots,J\}^d\}$.

We consider $\phi_{i_k}$, $\psi_{i_k}= \mDeltaD^{-1}\phi_{i_k}$, $i_k=1,\dots,J$ to be the one dimensional basis functions defined in the previous section and construct the basis functions $\{\bfphi_\ui\}$, $\ui\in \{1,\ldots,J\}^d$ of $\mathbb{V}_h$ in $\mathbb{R}^d$ as follows: for $\ui\in \{1,\ldots,J\}^d$ we set
\begin{align}
\bfphi_\ui(x) =\,&\left(\frac{3}{d^{1/(d-1)}}\frac{1}{h^2}\right)^{d-1}\sum_{k=1}^d \phi_{i_k}(x_k)\prod_{\substack{l=1\\ l \ne k}}^d \psi_{i_l}(x_l)\label{def_phi_i_d}\\
=\,&\sum_{k=1}^d \phi_{i_k}(x_k)\prod_{\substack{l=1\\ l \ne k}}^d \left(\frac{3}{d^{1/(d-1)}}\frac{1}{h^2}\psi_{i_l}(x_l)\right)\qquad x\in \D\,. \nonumber
\end{align}
On noting $\psi_{i_k}= \mDeltaD^{-1}\phi_{i_k}$ it can be deduced from \eqref{def_phi_i_d} by a direct calculation that $\bfpsi_i=\mDeltaD^{-1}\bfphi_\ui$ can be expressed explicitly as
\begin{align}
\bfpsi_\ui(x)&= \left(\frac{3}{d^{1/(d-1)}}\frac{1}{h^2}\right)^{d-1} \prod_{k=1}^d \psi_{i_k}(x_k)\label{def_psi_i_d}\\
&= \left(\frac{d^{1/(d-1)}}{3} h^2\right)\prod_{k=1}^d \left(\frac{3}{d^{1/(d-1)}}\frac{1}{h^2}\psi_{i_k}(x_k)\right) \quad x\in \D\,. \nonumber
\end{align}
Equivalently the basis functions $\bfpsi_\ui(x)$, $\ui\in \{1,\ldots,J \}^d$  are the solutions of the Poisson problem
\begin{alignat*}{2}
-\Delta \bfpsi_\ui&=\bfphi_\ui&&\quad\text{ in }\D=(-L,L)^d,\\
\bfpsi_\ui&=0 &&\quad\text{ on }\partial\D\,.
\end{alignat*}

An example of a basis function for $2\leq i_k\leq J-1$ for $d=2$ is given in Figure~\ref{fig_basis2d}.
\begin{figure}
\centering
\includegraphics[trim=100 0 50 150,clip, scale=0.36]{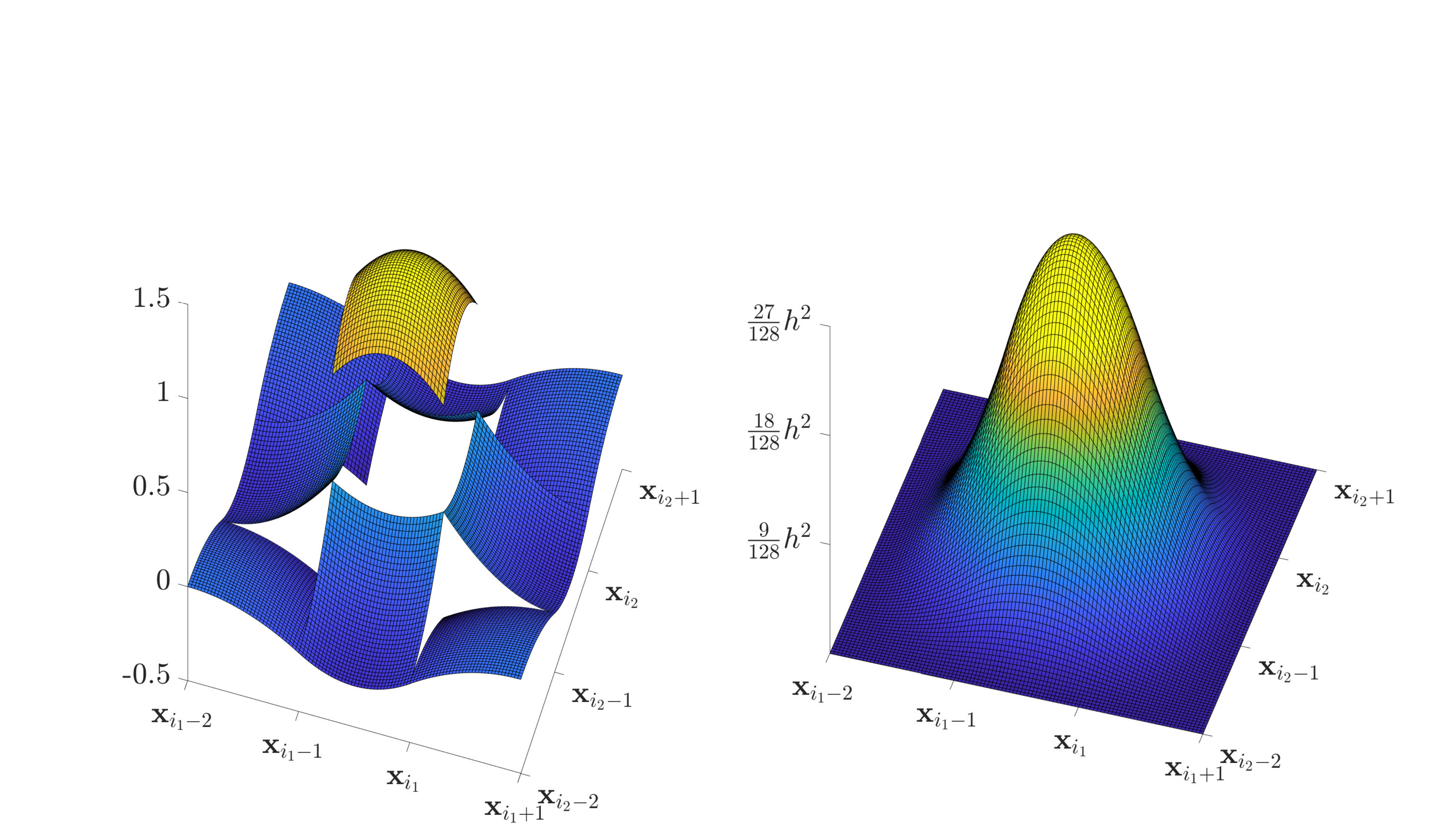}
\caption{$\bfphi_{(i_1,i_2)}$ and $\bfpsi_{(i_1,i_2)}$ for $d=2$}
\label{fig_basis2d}
\end{figure}

Clearly $\bfpsi_\ui\in \mathcal{C}^1(\bar\D)$ since $\psi_{i_k}\in \mathcal{C}^1([-L,L])$ for all $k=1,\ldots,d$. 
In addition, since 
$\psi_{i_k}$, $\phi_{i_k}\subset \mathbb{R}$ have a small local support in $[-L,L]$, 
also $\mathrm{supp}(\bfpsi_\ui)= \bigtimes_{k=1}^d \mathrm{supp}(\psi_{i_k})$ and 
$\mathrm{supp}(\bfphi_\ui)= \bigcup_{k=1}^d \mathrm{supp}(\phi_{i_k}) \times \left(\bigtimes_{\substack{l=1\\ l \ne k}}^d \mathrm{supp}(\psi_{i_l}) \right)\subset\mathbb{R}^d$
remain ''small''. {Consequently, the "mass" matrix for $d\ge 1$ 
$$
\mathbf{M}_h = \{m_{ij}\}_{i,j=1}^{\tilde{J}} := \{(\bfphi_j, \mDeltaD^{-1} \bfphi_i)_{\Ltwo}\}_{i,j=1}^{\tilde{J}}\equiv \{(\bfphi_j, \bfpsi_i)_{\Ltwo}\}_{i,j=1}^{\tilde{J}}
$$ 
is sparse; more precisely, there are only $5^d$ non-zero elements in each row of $\mathbf{M}_h$.}

\mycomment{
\begin{rem}\label{rem_recursive_basis}
We note, that the basis functions $\bfphi^{d}$, $\bfpsi^{d}$  for dimension $d$ can be also computed recursively from the basis functions $\bfphi^{d-1}$, $\bfpsi^{d-1}$ for dimension $d-1$ via
\begin{align*}
\bfphi_\ui(x)&\equiv \bfphi_\ui^{d}(x)=\sum_{k=1}^d \phi_{i_k}(x_k)\prod_{\substack{l=1\\ l \ne k}}^d \left(\frac{C(d)}{h^2}\psi_{i_l}(x_l)\right)\\
&= \left(\frac{C(d)}{C(d-1)}\right)^{d-1}\sum_{k=1}^d \phi_{i_k}(x_k)\prod_{\substack{l=1\\ l \ne k}}^d \left(\frac{C(d-1)}{h^2}\psi_{i_l}(x_l)\right)\\
&= \left(\frac{C(d)}{C(d-1)}\right)^{d-1}\sum_{k=1}^{d-1} \phi_{i_k}(x_k)\prod_{\substack{l=1\\ l \ne k}}^d \left(\frac{C(d-1)}{h^2}\psi_{i_l}(x_l)\right)\\
&\qquad +\left(\frac{C(d)}{C(d-1)}\right)^{d-1}\phi_{i_d}(x_d)\prod_{\substack{l=1\\ l \ne d}}^d \left(\frac{C(d-1)}{h^2}\psi_{i_l}(x_l)\right)\\
&= \left(\frac{C(d)}{C(d-1)}\right)^{d-1}\sum_{k=1}^{d-1} \phi_{i_k}(x_k)\prod_{\substack{l=1\\ l \ne k}}^{d-1} \left(\frac{C(d-1)}{h^2}\psi_{i_l}(x_l)\right)\left(\frac{C(d-1)}{h^2}\psi_{i_d}(x_d)\right)\\
&\qquad +\left(\frac{C(d)}{C(d-1)}\right)^{d-1}\phi_{i_d}(x_d)\prod_{\substack{l=1}}^{d-1} \left(\frac{C(d-1)}{h^2}\psi_{i_l}(x_l)\right)\\
&=\left(\frac{C(d)}{C(d-1)}\right)^{d-1}\bfphi_\ui^{d-1}(x_1,\dots,x_{d-1})\left(\frac{C(d-1)}{h^2}\psi_{i_d}(x_d)\right)\\
&\qquad +\left(\frac{C(d)}{C(d-1)}\right)^{d-1}\left(\frac{C(d-1)}{h^2}\phi_{i_d}(x_d)\right)\bfpsi_i^{d-1}(x_1,\dots,x_{d-1})\\
&=\left(\frac{C(d)}{C(d-1)}\right)^{d-1}\frac{C(d-1)}{h^2} [\bfphi_\ui^{d-1}(x_1,\dots,x_{d-1})\psi_{i_d}(x_d)\\
& \hspace{5.5cm} +\phi_{i_d}(x_d)\bfpsi_i^{d-1}(x_1,\dots,x_{d-1})]
\end{align*}
where $C(d):=\frac{3}{d^{1/(d-1)}}$ and $x= (x_1,\dots, x_{d-1},x_d)^T\in \D \subset \mathbb{R}^d$, similar
\begin{align*}
\bfpsi_i(x)&\equiv \bfpsi_i^{d}(x)=\left(\frac{h^2}{C(d)}\right)\prod_{k=1}^d \left(\frac{C(d)}{h^2}\psi_{i_k}(x_k)\right)\\
&=\left(\frac{C(d-1)}{C(d)}\right)\left(\frac{C(d)}{C(d-1)}\right)^{d}\left(\frac{h^2}{C(d-1)}\right)\prod_{k=1}^d \left(\frac{C(d-1)}{h^2}\psi_{i_k}(x_k)\right)\\
&=\left(\frac{C(d)}{C(d-1)}\right)^{d-1}\left(\frac{h^2}{C(d-1)}\right)\prod_{k=1}^{d-1} \left(\frac{C(d-1)}{h^2}\psi_{i_k}(x_k)\right) \left(\frac{C(d-1)}{h^2}\psi_{i_d}(x_d)\right)\\
&=\left(\frac{C(d)}{C(d-1)}\right)^{d-1}\frac{C(d-1)}{h^2}\bfpsi_i^{d-1}(x_1,\dots,x_{d-1})\psi_{i_d}(x_d)
\end{align*}
In the following we omit the dependence of the dimension an just write $\bfphi$, $\bfpsi$
\end{rem}
}

By construction, the finite element space $\mathbb{V}_h$ 
consists of (discontinuous) piecewise polynomial functions on the rectangular partition $\mathcal{T}_h$ of the domain $\D$.
In order to analyze the approximation properties of $\mathbb{V}_h$ in $\mathbb{L}^p$ it is convenient to
consider the space of piecewise constant functions on $\mathcal{T}_h$ which is denoted as $\overline{\mathbb{V}}_h=\mathrm{span}\{\bfchi_{\ui}\}$ where $\bfchi_\ui:=\mathbbm{1}_{\D_\ui}$.

We define the restriction operator $\overline{R}_h:\Lp\rightarrow\overline{\mathbb{V}}_h$ as
\begin{equation}\label{def_orh}
\overline{R}_h v(x):=\sum_{\ui\in\{1,\ldots,J\}^d} \overline{v}_\ui \bfchi_{\ui}(x)\,,
\end{equation}
where $\DD \overline{v}_\ui = \left(\frac{1}{|\D_\ui|}\int_{\D_\ui} v(y)\d y\right)$.

Next we analyze the properties of the operator $\overline{R}_h$.
\begin{lemma}\label{lem_orh}
For any $p\geq 1$ the operator $\overline{R}_h$ is $\mathbb{L}^p$-stable, i.e.,  $\|\overline{R}_h v\|_{\Lp}\le \|v\|_{\Lp}$ for all $v\in \Lp$, and for all $v\in \Wkp{1}{p}$ it holds that
$$
\|v- \overline{R}_h v\|_{\Lp}\le C h\|\nabla v\|_{\Lp}\,.
$$
\end{lemma}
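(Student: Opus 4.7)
The plan is to prove both statements cell-by-cell, exploiting the fact that $\overline{R}_h v$ restricted to a cell $\D_\ui$ is simply the mean value $\overline{v}_\ui$, and then sum the estimates.

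For $\mathbb{L}^p$-stability, I would apply Jensen's inequality to the convex function $t\mapsto |t|^p$ on each cell: for every $\ui \in \{1,\dots,J\}^d$,
\begin{equation*}
|\overline{v}_\ui|^p = \left|\frac{1}{|\D_\ui|}\int_{\D_\ui} v(y)\d y\right|^p \le \frac{1}{|\D_\ui|}\int_{\D_\ui} |v(y)|^p \d y\,.
\end{equation*}
Multiplying by $|\D_\ui|$ and summing over the disjoint cells $\D_\ui$ (which cover $\D$ up to a null set) yields $\|\overline{R}_h v\|_{\Lp}^p \le \|v\|_{\Lp}^p$ directly from the definition \eqref{def_orh}.

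For the approximation estimate, the key ingredient is a local Poincaré--Wirtinger inequality on each rectangular cell: for $v\in \Wkp{1}{p}(\D_\ui)$,
\begin{equation*}
\|v - \overline{v}_\ui\|_{\Lp(\D_\ui)} \le C_P\,\mathrm{diam}(\D_\ui)\,\|\nabla v\|_{\Lp(\D_\ui)} \le C h \|\nabla v\|_{\Lp(\D_\ui)}\,,
\end{equation*}
where the constant $C_P$ is independent of $\ui$ and $h$ due to the uniform shape of the rectangular partition (it depends only on $d$ and $p$). This can be obtained by the standard scaling argument, pulling back $\D_\ui$ to the reference cube $(0,1)^d$ on which the Poincaré--Wirtinger inequality with mean zero is classical (see, e.g., \cite[Ch.~4]{book_grisvard}). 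Since $\overline{R}_h v|_{\D_\ui} \equiv \overline{v}_\ui$, raising this to the $p$-th power and summing over $\ui$ gives
\begin{equation*}
\|v - \overline{R}_h v\|_{\Lp}^p = \sum_{\ui} \|v - \overline{v}_\ui\|_{\Lp(\D_\ui)}^p \le C^p h^p \sum_{\ui}\|\nabla v\|_{\Lp(\D_\ui)}^p = C^p h^p \|\nabla v\|_{\Lp}^p\,.
\end{equation*}

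Neither step involves a serious obstacle: the stability is pure Jensen, and the approximation bound hinges on a single scaling-invariant constant from the reference-cell Poincaré inequality. The only point requiring a brief verification is the $h$-scaling of $C_P$, which reduces to the change of variables $\hat{x}=(x-x_\ui)/h$ transforming $\D_\ui$ to $(0,1)^d$ and producing the factor $h$ when passing $\nabla$ through.
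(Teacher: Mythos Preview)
Your proof is correct and follows essentially the same cell-by-cell strategy as the paper. The only minor differences are that for stability the paper phrases the same bound via H\"older rather than Jensen, and for the approximation estimate the paper derives the local bound directly from the fundamental theorem of calculus (obtaining the explicit constant $C(p,d)=d^{(p-1)/p}$) instead of invoking the Poincar\'e--Wirtinger inequality as a black box.
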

\begin{proof}
The $\mathbb{L}^p$-stability follows from the definition of $\overline{R}_h$ by the H\"older inequality as
\begin{align*}
\|\overline{R}_h v\|_{\Lp}^p
\le \sum_{\ui\in\{1,\ldots,J\}^d}|\D_\ui| \int_{\D_\ui}|v(y)|^p\d y\left( \int_{\D_\ui}\frac{1}{|\D_\ui|^{p/(p-1)}}\d y \right)^{p-1} 
 = \|v\|_{\Lp}^p\,.
\end{align*}

%
Next, we assume that $v$ is smooth, the result for $v\in \Wkp{1}{p}$ follows by density.
By the fundamental theorem of calculus 
and the H\"older inequality we get that
\begin{align*}
&\|v-\overline{R}_h v\|_{\Lp}^p\le \sum_{\ui\in\{1,\ldots,J\}^d}\frac{1}{|\D_\ui|}\int_{\D_\ui} \int_{\D_\ui}|v(x)-v(y)|^p\d y \d x\\
&\leq d^{p-1} h^p \sum_{k=1}^d\sum_{\ui\in\{1,\ldots,J\}^d}\int_{\D_\ui} |\partial_{x_k}v(x)|^p \d x =C(p,d)h^p\|\nabla v \|_{\Lp}^p.
\end{align*}
\end{proof}

\begin{lemma}
$\{\overline{\mathbb{V}}_h\}_{h>0}$ is a Galerkin scheme for $\Lp$, {$p\geq 1$}. I.e., for every $v\in \Lp$ it holds that
$$
\inf_{\overline{v}_h\in \overline{\mathbb{V}}_h}\|v-\overline{v}_h\|_{\Lp}\rightarrow 0 \text{ for } h\rightarrow 0\,.
$$
\end{lemma}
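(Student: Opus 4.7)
The plan is to use a standard density plus stability argument, exactly as is typical for verifying Galerkin approximation properties. The two ingredients of the previous Lemma — $\mathbb{L}^p$-stability of $\overline{R}_h$ and the first-order error estimate on $\mathbb{W}^{1,p}$ — are precisely what is needed.

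First, I would fix $v \in \mathbb{L}^p$ and $\varepsilon > 0$. Using the density of $\mathcal{C}_c^\infty(\D)$ (or of $\mathbb{W}^{1,p}(\D)$) in $\mathbb{L}^p(\D)$, I would choose $v_\varepsilon \in \mathbb{W}^{1,p}(\D)$ such that $\|v - v_\varepsilon\|_{\Lp} < \varepsilon/3$. The triangle inequality then gives
\begin{align*}
\|v - \overline{R}_h v\|_{\Lp} &\le \|v - v_\varepsilon\|_{\Lp} + \|v_\varepsilon - \overline{R}_h v_\varepsilon\|_{\Lp} + \|\overline{R}_h(v_\varepsilon - v)\|_{\Lp}.
\end{align*}
The first term is already controlled. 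For the third term I invoke the $\mathbb{L}^p$-stability from Lemma~\ref{lem_orh}, which yields $\|\overline{R}_h(v_\varepsilon - v)\|_{\Lp} \le \|v_\varepsilon - v\|_{\Lp} < \varepsilon/3$. For the middle term I invoke the approximation estimate from the same lemma to bound $\|v_\varepsilon - \overline{R}_h v_\varepsilon\|_{\Lp} \le Ch\|\nabla v_\varepsilon\|_{\Lp}$, which is smaller than $\varepsilon/3$ once $h$ is taken sufficiently small (depending on $v_\varepsilon$, hence on $\varepsilon$).

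Combining these three bounds gives $\|v - \overline{R}_h v\|_{\Lp} < \varepsilon$ for all sufficiently small $h$, and since $\overline{R}_h v \in \overline{\mathbb{V}}_h$ this implies
\begin{equation*}
\inf_{\overline{v}_h \in \overline{\mathbb{V}}_h}\|v - \overline{v}_h\|_{\Lp} \le \|v - \overline{R}_h v\|_{\Lp} \to 0 \quad \text{as } h \to 0.
\end{equation*}

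There is really no hard step here; the only mild subtlety is ensuring the density statement works for $p=1$ as well (it does, since $\mathcal{C}_c^\infty$ is dense in $\mathbb{L}^1$), and being careful that the constant $C$ in the $\mathbb{W}^{1,p}$ estimate from Lemma~\ref{lem_orh} is independent of $h$, which was already established there. The argument is otherwise a textbook application of the "stability + consistency on a dense subspace $\Rightarrow$ convergence" principle.
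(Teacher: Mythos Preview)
Your proof is correct and follows essentially the same approach as the paper: both use the density of $\mathbb{W}^{1,p}$ in $\mathbb{L}^p$ together with the $\mathbb{L}^p$-stability and the first-order approximation estimate for $\overline{R}_h$ from the preceding lemma, then bound the infimum by $\|v-\overline{R}_h v\|_{\Lp}$. The paper compresses the density argument into a single sentence, whereas you spell out the $\varepsilon/3$ splitting explicitly.
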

\begin{proof}
By density of $\Wkp{1}{p}\hookrightarrow \Lp$ we deduce from Lemma~\ref{lem_orh} that
\begin{align}\label{orh_conv}
\|v-\overline{R}_h v \|_{\Lp}
& \rightarrow 0\text{ for } h\rightarrow 0\quad \forall v\in \Lp.
\end{align}
Since $\overline{R}_hv\in \overline{\mathbb{V}}_h$ we get from the above that
$$
\inf_{\overline{v}_h\in \overline{\mathbb{V}}_h}\|v-\overline{v}_h\|_{\Lp}\le \|v-\overline{R}_hv \|_{\Lp}\rightarrow 0 \text{ for } h\rightarrow 0\,.
$$
\end{proof}

For the (piecewise polynomial) basis functions $\bfphi_\ui$ defined in \eqref{def_phi_i_d}
we denote $\obfphi_\ui:=\overline{R}_h\bfphi_\ui\in \overline{\mathbb{V}}_h$
and observe that $\overline{\mathbb{V}}_h=\mathrm{span}\{\bfchi_\ui\}=\mathrm{span}\{\obfphi_\ui\}$.


In order to show the approximation property of
the finite element space $\Vh:=\mathrm{span}\{\bfphi_\ui\}\subset \Lp$ 
we define the restriction operator $R_h:\Lp\rightarrow \Vh$ as
\begin{equation}\label{def_rh}
R_h v(x):=\sum_{\ui\in\{1,\ldots,J\}^d} v_\ui \bfphi_\ui(x) \,,
\end{equation}
where $v_\ui = \DD \frac{8}{3h^2}\frac{1}{|\D_\ui|}\int_{\D_\ui}\mDeltaD^{-1} v(y) \d y$.

For simplicity we restrict the proof of the convergence of the above restriction operator to $d=2$ and assume that $\D$ is a rectangle;
we expect an analogous proof to hold for $d\geq 3$ and more general domains as well. 
For $n\in \mathbb{N}$ we denote by $\mathbb{V}_n := \mathrm{span}\{e_k,\,\, k=0,\dots,n\}$ the finite-dimensional space spanned by the
the first $n$ eigenfunctions of the homogeneous Dirichlet Laplace operator on the rectangular domain $\D=(-L,L)\times(-L,L)$
\begin{equation}
e_k(x_1,x_2)=\sin\left(2\pi k  \frac{x_1+L}{2L}\right)\sin\left(2\pi k \frac{x_2+L}{2L}\right),\quad k\in\mathbb{N}.\label{Fourier_eb}
\end{equation}
{By the density of $\cup_{n\in\mathbb{N}}\mathbb{V}_n$ in $\Lp$ it suffices to show the
convergence of the restriction operator \eqref{def_rh} for $v\in\mathbb{V}_n$.} 
\begin{lemma}\label{lem_rh_conv}
Let $n\in \mathbb{N}$ be fixed. For any $p\geq 1$ and $v \in \mathbb{V}_n$ it holds that
$$
\|v- R_h v\|_{\Lp}\longrightarrow 0 \quad \text{for }h\rightarrow 0\,.
$$
\end{lemma}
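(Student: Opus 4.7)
My plan is to use linearity of $R_h$ and the finite dimension of $\mathbb{V}_n$ to reduce to the case $v=e_k$ for a single eigenfunction, and then to verify $\|R_h e_k - e_k\|_{\Lp}\to 0$ via a direct cell-by-cell calculation. Once convergence is proved for each $e_k$, the general statement follows by linearity and the equivalence of norms on the finite-dimensional space $\mathbb{V}_n$.

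The key simplification when $v = e_k$ is that $\mDeltaD^{-1}e_k = \lambda_k^{-1}e_k$, so the coefficients in \eqref{def_rh} reduce to $v_\ui = \tfrac{8}{3h^2\lambda_k}\,\overline{e_k}_\ui$, where the cell average $\overline{e_k}_\ui$ agrees with $e_k$ at the cell centre $x_\ui$ up to an $O(h^2)$ error by smoothness. Fix an interior cell $\D_\uj$; only the nine functions $\bfphi_{\uj+\uk}$, $\uk\in\{-1,0,1\}^2$, are nonzero there. Since the Dirichlet eigenfunctions on the rectangle $\D$ have tensor-product form $e_k = e^{(1)}(x_1)\,e^{(2)}(x_2)$, the cell averages factorise as well, and writing $\bfphi_\ui = \tfrac{3}{2h^2}[\phi_{i_1}\psi_{i_2}+\psi_{i_1}\phi_{i_2}]$ splits $R_h e_k|_{\D_\uj}$ into two symmetric tensor products of one-dimensional sums. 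In each such sum the $\phi$-factor acts as the symmetric second difference on the averages, which Taylor expansion identifies with $\tfrac{h^2}{2}\mu_l\,e^{(l)}(x_{j_l})+O(h^4)$, where $\mu_l$ is the one-dimensional eigenvalue and $\mu_1+\mu_2=\lambda_k$. The companion $\psi$-factor contributes the identity $\sum_i\psi_i\equiv h^2/2$ at leading order, a direct consequence of \eqref{psi_i}, and combining the two gives a leading contribution $\tfrac{h^4\lambda_k}{4}e_k(x_\uj)$. The overall prefactor $\tfrac{4}{h^4\lambda_k}=\tfrac{8}{3h^2\lambda_k}\cdot\tfrac{3}{2h^2}$ cancels this exactly, yielding $R_h e_k(x) = e_k(x_\uj)+O(h)$ uniformly on $\D_\uj$.

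Cells meeting $\partial\D$ are treated with the boundary-adapted pieces \eqref{phi_1}, \eqref{phi_J}, \eqref{psi_1}, \eqref{psi_J}; the homogeneous Dirichlet condition $e_k|_{\partial\D}=0$ combined with the uniform boundedness of those basis pieces preserves the $O(h)$ cell-wise bound. Summing $\|R_h e_k - e_k\|_{\Lp(\D_\uj)}^p \le Ch^p|\D_\uj|$ over the $O(h^{-2})$ cells then gives $\|R_h e_k - e_k\|_{\Lp} = O(h)\to 0$.

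The main obstacle is the second paragraph: one has to verify that the seemingly ad hoc constant $\tfrac{8}{3h^2}$ in \eqref{def_rh}, combined with the internal $\tfrac{3}{2h^2}$ hidden inside each $\bfphi_\ui$, produces the exact cancellation against the product of the discrete Laplacian contribution $\tfrac{h^2}{2}\mu_l$ and the identity $\sum_i\psi_i = h^2/2$. This matching is the only place where the tensor-product algebra, the explicit one-dimensional formulas, and the precise normalisation enter essentially; the natural generalisation to $d \ge 3$ would demand the analogous but more intricate verification with the constant $(3/d^{1/(d-1)})^{d-1}$.
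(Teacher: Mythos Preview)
Your interior computation is correct and is organized quite differently from the paper. Where the paper introduces the nine-point discrete Laplacian $-\Delta_h^9$, identifies $\overline{R}_h[R_hv]$ as the cell average of $(-\Delta_h^9)\mDeltaD^{-1}v$, and then controls $\|v-R_hv\|_{\Lp}$ through the triangle inequality via $\overline{R}_h[R_hv]$, you instead exploit the tensor-product structure $e_k(x_1,x_2)=e^{(1)}(x_1)e^{(2)}(x_2)$ to factor the double sum into a product of one-dimensional $\phi$-sums and $\psi$-sums. The two routes are linked by the same key identity: your $\sum_i\psi_i\equiv h^2/2$ is exactly the paper's cancellation identity \eqref{prop_cancel} rewritten. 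Your argument is arguably more transparent for the specific setting of sinusoidal eigenfunctions on a rectangle; the paper's nine-point-stencil route is less tied to that structure.

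The boundary treatment, however, is a genuine gap in your proposal. The sentence ``the homogeneous Dirichlet condition $e_k|_{\partial\D}=0$ combined with the uniform boundedness of those basis pieces preserves the $O(h)$ cell-wise bound'' does not carry the weight you place on it. On a cell adjacent to, say, the edge $x_1=-L$, the crude bound gives only $\overline{e^{(1)}}_{1},\overline{e^{(1)}}_{2}=O(h)$, hence a $\phi$-sum of order $O(h)$ and a $\psi$-sum of order $O(h^3)$; multiplied by the prefactor $4/(h^4\lambda_k)$ this yields $R_he_k=O(h^{-1})$ on that cell, and the $\Lp$ contribution from the $O(h^{-1})$ boundary cells then blows up for $p>1$. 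What actually saves the estimate is that the specific coefficients $3/2,-1/2$ in $\phi_1$ (and the analogous structure of $\psi_1$) are designed so that $\tfrac{3}{2}\overline{e^{(1)}}_{1}-\tfrac{1}{2}\overline{e^{(1)}}_{2}$ coincides with the interior second difference applied to the odd extension of $e^{(1)}$ across $x_1=-L$; this pushes the $\phi$-sum down to $O(h^3)$ and restores the $O(h)$ cell-wise bound. The paper makes this explicit through the ghost-cell construction and the relations $\phi_1|_{(\bfx_0,\bfx_1)}=\phi_1^*-\phi_0^*$, $\psi_1|_{(\bfx_0,\bfx_1)}=\psi_1^*-\psi_0^*$, together with the odd symmetry of $e_k$ across $\partial\D$. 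You need to supply exactly this verification (or an equivalent direct Taylor expansion using the boundary-adapted coefficients) before the boundary cells can be absorbed into your $O(h)$ estimate.
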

\begin{proof}
{It is enough to show that the statement holds for $v \equiv e_k$, $k\in\mathbb{N}$.}

For $x=(x_1, x_2)\in \D$ we consider the following discrete Laplace operator
\begin{align}
\nonumber
(-\Delta_h^9) u(x_1,x_2) := \frac{8}{3h^2}&\left[  u(x_1,x_2)-\frac18 u(x_1+h,x_2+h)-\frac18 u(x_1,x_2+h)\right.\\
\nonumber
&\quad-\frac18 u(x_1-h,x_2+h)-\frac18 u(x_1+h,x_2)\\
\label{deltah}
&\quad-\frac18 u(x_1-h,x_2)-\frac18 u(x_1+h,x_2-h)\\
\nonumber
&\quad\left.-\frac18 u(x_1,x_2-h)-\frac18 u(x_1-h,x_2-h)\right].
\end{align}
The discrete Laplace operator $-\Delta_h^9$ corresponds 
to the 9-point finite difference approximation of the Laplace operator,
cf. \cite[p. 190, Example 4]{book_fd}; see also Figure \ref{fig_Stencil9}.
\begin{figure}[htp!]
\centering
\includegraphics[width=0.3\textwidth]{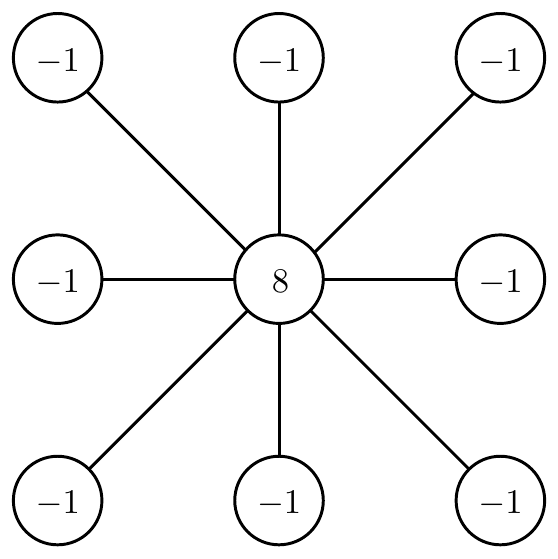}
\caption{Finite difference stencil related to the Discrete Laplace operator $-\Delta_h^9$.}
\label{fig_Stencil9}
\end{figure}
We note that for $u\in \mathcal{C}^4(\overline{\D})$ the discrete Laplace operator (\ref{deltah}) satisfies
the consistency property
\begin{equation}\label{consist}
(-\Delta_h^9)u(x)+ \Delta u(x)=\mathcal{O}(h^2) \qquad \forall x \in \D.
\end{equation}

{With each element $\D_{\ui}\in\mathcal{T}_h$ we associate the corresponding basis functions $\bfphi_{\ui}$, $\bfpsi_{\ui}$.
To deal with the complication that the basis functions associated
with the elements of the partition $\mathcal{T}_h$ along the boundary of the domain $\D$ have a different shape (c.f., \eqref{def_phi_i_d} for $i_1,i_2=1,J$ and \eqref{phi_1}, \eqref{phi_J}),
we introduce a layer of $4(J+1)$ ''ghost'' cells $\D_{(0,i_2)}^*$, $\D_{(J+1,i_2)}^*$, $\D_{(i_1,0)}^*$, $\D_{(i_1,J+1)}^*$, $i_1,i_2=0,\dots,J+1$
(the dimensions of the cells will be specified below)  along the outer side of the boundary of $\D$. We then denote the resulting extended partition with $(J+2)^2$ cells as 
$\mathcal{T}_h^*= \mathcal{T}_h\cup \{\D_{(i_1,i_2)}^*\}$, i.e., $\mathcal{T}_h^*$ includes the elements of $\mathcal{T}_h$ and the ''ghost'' cells.
}

{Recall the following trivial symmetry properties of the eigenfunctions $e_k$ from \eqref{Fourier_eb} 
(as well as for $(-\Delta^{-1}_D) e_k$, since $(-\Delta^{-1}_D) e_k = \lambda_k e_k$) which hold along the boundary of $\D$:
$e_k(-L-x_1,x_2)=-e_k(-L+x_1,x_2)$, $e_k(L+x_1,x_2)=-e_k(L-x_1,x_2)$, 
and $e_k(-L-x_1,-L-x_2)=e_k(-L+x_1,-L+x_2)$, $e_k(L+x_1,-L-x_2)=e_k(L-x_1,-L+x_2)$.
We note that (for ghost cells $\D_{\ui}^*$ with dimensions given implicitly via the definition \eqref{phi_phis}) the symmetry also transfers to the piecewise constant approximation of
$e_k$ over $\mathcal{T}_h^*$, i.e., for $\overline{R}_h e_k$ naturally extended on $\mathcal{T}_h^*$.
We will use this fact to construct an "extension" of $R_h$ from \eqref{def_rh} on $\mathcal{T}_h^*$ (see \eqref{def_rh_ext} below).
}

{ 
We consider a (modified) finite element basis associated with the elements of the extended partition $\mathcal{T}_h^*$ 
with $(J+2)^2$ basis functions which are defined
as \eqref{def_phi_i_d} with the exception that we only use the (suitably shifted) "interior" basis functions \eqref{phi_i}, \eqref{psi_i}.
Namely, we use \eqref{def_phi_i_d} where for $i_1 = i$, $i_2=i$ we set for $i = 0,\dots, J+1$
\begin{align}
\label{phis_i}
\phi_i^*(x)&=-\frac{1}{2}\chi_{(\bfx_{i-2},\bfx_{i-1}]}(x)+\chi_{(\bfx_{i-1},\bfx_{i}]}(x)-\frac{1}{2}\chi_{(\bfx_{i},\bfx_{i+1}]}(x)\,,\quad 
\end{align}
where we define $\bfx_{-1}= -L - (\bfx_{1} - \bfx_{0})$, $\bfx_{J+1}= L + (\bfx_{J} - \bfx_{J-1})$
(i.e., we replace the basis functions \eqref{phi_1}, \eqref{phi_J} and \eqref{psi_1}, \eqref{psi_J} by their "interior" counterparts);
we proceed analogously for the basis functions $\psi_1$, $\psi_J$, i.e., replace \eqref{psi_1}, \eqref{psi_J} by a suitably shifted analogues
$\psi_1^*$, $\psi_J^*$ of \eqref{psi_i}.

We note that the ''boundary'' basis functions satisfy $\phi_1(x)|_{(\bfx_{0}, \bfx_{1})} = (\phi_1^*(x) - \phi_{0}^*(x))|_{(\bfx_{0}, \bfx_{1})}$,
$\phi_J(x)|_{(\bfx_{J-1}, \bfx_{J})} = (\phi_J^*(x) - \phi_{J+1}^*(x))|_{(\bfx_{J-1}, \bfx_{J})}$ (and similarly for $\psi_1$, $\psi_J$).
We deduce from \eqref{def_phi_i_d} that analogous relations also hold for $\bfphi^*_\ui$ and $\bfphi_\ui$ (as well as for $\bfpsi^*_\ui$ and $\bfpsi_\ui$)
for instance it holds at the bottom boundary (analogically for the top, left and right boundaries)
\begin{equation}\label{phi_phis}
\bfphi_{(i_1,1)}|_{\D_{(i_1,1)}}= \big(\bfphi_{(i_1,1)}^* - \bfphi_{(i_1,0)}^*\big)|_{\D_{(i_1,1)}}\,,
\end{equation}
and similarly for $\bfphi_{(i_1,1)}|_{\D_{(i_1+1,1)}}$, $\bfphi_{(i_1,1)}|_{\D_{(i_1-1,1)}}$.
Slightly modified relations hold for the basis functions associated with the corner elements
$\D_{(1,1)}$, $\D_{(1,J)}$, $\D_{(J,1)}$, $\D_{(J,J)}$ of $\mathcal{T}_h$; for instance for $\D_{(1,1)}$ we deduce
\begin{align}\label{phi_phis_corner}
\bfphi_{(1,1)}|_{\D_{(1,1)}} &= \big(\bfphi_{(1,1)}^* - \bfphi_{(1,0)}^*- \bfphi_{(0,1)}^* + \bfphi_{(1,1)}^*\big)|_{\D_{(1,1)}}\,,
\\ \nonumber
\bfphi_{(1,1)}|_{\D_{(2,1)}} & = \big(\bfphi_{(1,1)}^* - \bfphi_{(1,0)}^*\big)|_{\D_{(2,1)}}\,,
\\ \nonumber
\bfphi_{(1,1)}|_{\D_{(1,2)}} & = \big(\bfphi_{(1,1)}^* - \bfphi_{(0,1)}^*\big)|_{\D_{(1,2)}}\,,
\end{align}
and similarly for basis functions at $\D_{(1,J)}$, $\D_{(J,1)}$, $\D_{(J,J)}$.

On noting the aforementioned symmetry properties of eigenfunctions $e_k$ and the relations \eqref{phi_phis}, \eqref{phi_phis_corner} (along with their counterparts covering the remaining situations)
we observe that \eqref{def_rh} for $v \equiv e_k$ is equivalent to 
\begin{equation}\label{def_rh_ext}
R_h v(x)|_{\D} \equiv \sum_{\ui\in\{0,1,\ldots,J,J+1\}^d} v_\ui \bfphi^*_\ui(x)\,,
\end{equation}
where $\{\bfphi^*_\ui\}$ is the previously constructed extended basis of "interior" basis functions associated with elements of $\mathcal{T}_h^*$.

The equivalent representation \eqref{def_rh_ext} of the restriction operator \eqref{def_rh} simplifies the subsequent considerations, 
since it only involves one type of (interior) basis functions.
For the rest of the proof we will work with the basis functions $\bfphi^*_\ui$ but
drop the superscript ''$^*$'' to simplify the notation (also note $\bfphi^*_{(i_1,i_2)}\equiv \bfphi_{(i_1,i_2)}$ for $1< i_1,i_2 < J$, i.e., the modification is only required at the boundary).
}

We consider an element $\D_\ui \subset \D$.
By a direct calculation of the elementwise mean of the basis functions \eqref{def_phi_i_d} for $d=2$ (i.e., evaluating $\overline{\bfphi}_\uj\equiv \overline{R}_h \bfphi_\uj$),
we note that for $x\in \D_\ui$, fixed $\ui = (i_1, i_2)$ it holds that $\overline{\bfphi}_\ui(x) \equiv 1$ 
and $\overline{\bfphi}_\uj(x) \equiv -\frac{1}{8}$ 
for $\uj\in\mathcal{N}(\ui):=\{\uj\in \{1,,\ldots,J\}^2;\,\,\overline{\D}_\uj \cap \overline{\D}_\ui \neq \emptyset\}\equiv\{\uj = (i_1+k_1,i_2+k_2);\, k_1,k_2=-1,0,1\}$, 
$\uj\neq \ui$, cf. Figure~\ref{fig_Stencil9}; below we denote $\uk=(k_1,k_2)\in\{-1,0,1\}^2$ the local index of $\uj$ with respect to $\ui$ and write $\uj\equiv\mathrm{glob}_{\ui}(\uk)$.
Consequently, we observe that the coefficients in the definition of the discrete Laplace operator \eqref{deltah} for $x\in \D_\ui$
correspond to the values $\overline{\bfphi}_\uj|_{\D_\ui}$, $\uj\in \mathcal{N}(\ui)$, scaled by the factor $\frac{8}{3h^2}$.

Hence, from the above observation, noting the definitions \eqref{def_rh}, \eqref{def_orh} and recalling \eqref{deltah} we deduce for $x\in \D_\ui$  that
\begin{align}\nonumber
\overline{R}_h  [R_h v](x) & = \sum_{\uj\in\mathcal{N}(\ui)} v_\uj \overline{\bfphi}_\uj(x)  = \frac{8}{3h^2}\sum_{\uj\in\mathcal{N}(\ui)}\frac{1}{|\D_\uj|}\int_{\D_\uj} \mDeltaD^{-1} v(y)  \overline{\bfphi}_\uj(x) \d y
\\ \nonumber
& \equiv \frac{8}{3h^2} \sum_{k_1,k_2 = -1}^1 \frac{1}{|\D_{\mathrm{glob}_{\ui}(\uk)}|}\int_{\D_{\mathrm{glob}_{\ui}(\uk)}} \mDeltaD^{-1} v(y)  \overline{\bfphi}_{\mathrm{glob}_{\ui}(\uk)}(x) \d y
\\ \label{orr_delta9}
& \equiv \frac{1}{|\D_{\ui}|}\int_{\D_{\ui}} \sum_{k_1,k_2 = -1}^1 \mDeltaD^{-1} v(y_1+k_1h,y_2+k_2 h)  \frac{8}{3h^2} \overline{\bfphi}_{\mathrm{glob}_{\ui}(\uk)}(x) \d y
\\ \nonumber
& = \frac{1}{|\D_\ui|}\int_{\D_\ui} (-\Delta^9_h)\left(\mDeltaD^{-1} v(y)\right) \d y \,,
\end{align}
where we 
employed the integral transformation $\D_\uj \rightarrow \D_\ui$ for $\uj\neq \ui$ 
(i.e., $y=(y_1,y_2)\in \D_\uj \rightarrow (y_1+ k_1 h,y_2+k_2 h)\in \D_\ui$)
along with the fact that $|\D_\uj|=|\D_\ui|$.

By the consistency property \eqref{consist} we get from \eqref{orr_delta9} for $x\in \D_\ui$ that
\begin{align*}
\overline{R}_h [R_h v](x) &= \frac{1}{|\D_\ui|}\int_{\D_\ui} -\Delta\mDeltaD^{-1} v(y) \d y + \mathcal{O}(h^2) = 
\frac{1}{|\D_\ui|}\int_{\D_\ui} v(y) \d y + \mathcal{O}(h^2)\\ &\equiv \overline{R}_h v(x) + \mathcal{O}(h^2)\,.
\end{align*}
Consequently, on recalling Lemma~\ref{lem_orh} we conclude for $h \rightarrow 0$ that
\begin{equation}\label{rh_p1}
\left\|v- \overline{R}_h[R_h v]\right\|_{\Lp} \leq \|v- \overline{R}_h v \|_{\Lp} + \mathcal{O}(h^2) \rightarrow 0\,.
\end{equation}

Next, we estimate the difference $ \overline{R}_h[R_h v]-R_h v$. 
Due to the local support of the basis functions for $x\in \D_\ui$ we may express
\begin{align}\label{difx}
\left(\overline{R}_h[R_h v]-R_h v\right)(x) & = \frac{8}{3h^2}\sum_{\uj\in\mathcal{N}(i)}\frac{1}{|\D_\uj|}\int_{\D_\uj}\mDeltaD^{-1} v(y)\d y \left(\overline{\bfphi}_\uj(x)-\bfphi_\uj(x)\right)\,.
\end{align}
As in \eqref{orr_delta9} we employ the transformation $\D_\uj \rightarrow \D_\ui$ for $\uj\neq \ui$ and rewrite the above expression as 
\begin{align*}
&\left(\overline{R}_h[R_h v]-R_h v\right)(x) \\
& = \frac{8}{3h^2}\frac{1}{|\D_\ui|}\int_{\D_\ui} \sum_{k_1,k_2=-1}^{1} \Big(\mDeltaD^{-1} v(y_1+k_1h, y_2+k_2 h)   \left(\overline{\bfphi}_{\mathrm{glob}_{\ui}(\uk)}(x)-\bfphi_{\mathrm{glob}_{\ui}(\uk)}(x)\right)\Big)\d y\,.
\end{align*}
Hence, after expressing the basis functions \eqref{def_phi_i_d} explicitly (recall $\ui = (i_1, i_2)$, $x=(x_1,x_2)\in \D_\ui = (\mathbf{x}_{i_1-1},\mathbf{x}_{i_1})\times(\mathbf{x}_{i_2-1},\mathbf{x}_{i_2})$),
for  each $y=(y_1,y_2)$ we restate
\begin{align}\label{expand1} \nonumber
\sum_{k_1,k_2=-1}^{1} & \mDeltaD^{-1} v(y_1+k_1h, y_2+k_2 h)   \left(\overline{\bfphi}_{\mathrm{glob}_{\ui}(\uk)}(x)-\bfphi_{\mathrm{glob}_{\ui}(\uk)}(x)\right)
\\ \nonumber
=
\bigg[& \mDeltaD^{-1}v(y_{(1,1)})\left(-\frac{1}{2}a_{i_1,1}(x_1)-\frac{1}{2}a_{i_2,1}(x_2)+\frac{1}{8}\right)\\ \nonumber
&+\mDeltaD^{-1}v(y_{(0,1)})\left(-\frac{1}{2}a_{i_1,2}(x_1)+a_{i_2,1}(x_2)+\frac{1}{8}\right)\\ \nonumber
&+\mDeltaD^{-1}v(y_{(-1,1)})\left(-\frac{1}{2}a_{i_1,3}(x_1)-\frac{1}{2}a_{i_2,1}(x_2)+\frac{1}{8}\right)\\ \nonumber
&+\mDeltaD^{-1}v(y_{(1,0)})\left(a_{i_1,1}(x_1)-\frac{1}{2}a_{i_2,2}(x_2)+\frac{1}{8}\right)\\ 
&+\mDeltaD^{-1}v(y_{(0,0)})\Big(a_{i_1,2}(x_1)+a_{i_2,2}(x_2)-1\Big)\\ \nonumber
&+\mDeltaD^{-1}v(y_{(-1,0)})\left(a_{i_1,3}(x_1)-\frac{1}{2}a_{i_2,2}(x_2)+\frac{1}{8}\right)\\ \nonumber
&+\mDeltaD^{-1}v(y_{(1,-1)})\left(-\frac{1}{2}a_{i_1,1}(x_1)-\frac{1}{2}a_{i_2,3}(x_2)+\frac{1}{8}\right)\\ \nonumber
&+\mDeltaD^{-1}v(y_{(0,-1)})\left(-\frac{1}{2}a_{i_1,2}(x_1)+a_{i_2,3}(x_2)+\frac{1}{8}\right)\\ \nonumber
&+\mDeltaD^{-1}v(y_{(-1,-1)})\left(-\frac{1}{2}a_{i_1,3}(x_1)-\frac{1}{2}a_{i_2,3}(x_2)+\frac{1}{8}\right)\bigg]\,,
\end{align}
where we employ a shorthand notation $y_{(k_1 ,k_2)} = (y_1+k_1h, y_2+k_2 h)$
and for $n=1,2$ we denote (cf. \eqref{def_phi_i_d})
\begin{align*}
a_{i_n,1}(x_n)& := \frac{3}{2h^2}\psi_{i_n+1}(x_n) =\frac{3}{2h^2}\frac{1}{4}\left(x_n-{\bf x}_{i_n-1}\right)^2 & \text{for } x_n\in({\bf x}_{i_n-1},{\bf x}_{i_n}),\\
a_{i_n,2}(x_n)& := \frac{3}{2h^2}\psi_{i_n}(x_n) =\frac{3}{2h^2}\left[-\frac{1}{2}\left(x_n-{\bf x}_{i_n-1}-\frac{h}{2}\right)^2+\frac{3h^2}{8}\right]  & \text{for } x_n\in({\bf x}_{i_n-1},{\bf x}_{i_n}),\\
a_{i_n,3}(x_n)& := \frac{3}{2h^2}\psi_{i_n-1}(x_n) =\frac{3}{2h^2}\frac{1}{4}\left({\bf x}_{i_n}-x_n\right)^2  &\text{for } x_n\in({\bf x}_{i_n-1},{\bf x}_{i_n}).
\end{align*}
The following property, which follows from \eqref{psi_i} by direct calculation, will be essential in the sequel
\begin{equation}\label{prop_cancel}
a_{i_n,1}(x)+a_{i_n,2}(x)+a_{i_n,3}(x)= \frac{3}{4}\,,  
\end{equation}
for $i_n=2,\ldots,J-1$, and $x\in({\bf x}_{i_n-1},{\bf x}_{i_n})$.

Next, we expand the terms $\tilde{v}(y_{(k_1,k_2)}) := \mDeltaD^{-1}v(y_1+k_1h, y_2+k_2 h)$ in \eqref{expand1} at $y \equiv y_{(0,0)}$ using Taylor series as
\begin{align*}
& \sum_{k_1,k_2=-1}^{1}  \tilde{v}(y_{(k_1,k_2)})   \left(\overline{\bfphi}_{\mathrm{glob}_{\ui}(\uk)}(x)-\bfphi_{\mathrm{glob}_{\ui}(\uk)}(x)\right)
= I+\dots+IV\,,
\end{align*}
where
\begin{align*}
 I = & \bigg[\tilde{v}(y)+\left(\partial_{x_1}\tilde{v}(y)+\partial_{x_2}\tilde{v}(y)\right)h + \left(\frac{1}{2}\partial_{x_1}^2\tilde{v}(y)+\partial_{x_1}\partial_{x_2}\tilde{v}(y)+ \frac{1}{2}\partial_{x_2}^2\tilde{v}(y)\right)h^2\\
&\quad +\mathcal{O}(h^3)\bigg]\left(-\frac{1}{2}a_{i_1,1}(x_1)-\frac{1}{2}a_{i_2,1}(x_2)+\frac{1}{8}\right)\\
&+\bigg[\tilde{v}(y)+\partial_{x_2}\tilde{v}(y)h +  \frac{1}{2}\partial_{x_2}^2\tilde{v}(y)h^2+\mathcal{O}(h^3)\bigg]\left(-\frac{1}{2}a_{i_1,2}(x_1)+a_{i_2,1}(x_2)+\frac{1}{8}\right)\,,
\end{align*}
\begin{align*}
II=& \bigg[\tilde{v}(y)+\left(-\partial_{x_1}\tilde{v}(y)+\partial_{x_2}\tilde{v}(y)\right)h + \left(\frac{1}{2}\partial_{x_1}^2\tilde{v}(y)-\partial_{x_1}\partial_{x_2}\tilde{v}(y)+ \frac{1}{2}\partial_{x_2}^2\tilde{v}(y)\right)h^2\\
&\quad +\mathcal{O}(h^3)\bigg]\left(-\frac{1}{2}a_{i_1,3}(x_1)-\frac{1}{2}a_{i_2,1}(x_2)+\frac{1}{8}\right)\\
&+\bigg[\tilde{v}(y)+\partial_{x_1}\tilde{v}(y)h + \frac{1}{2}\partial_{x_1}^2\tilde{v}(y)h^2+\mathcal{O}(h^3)\bigg]\left(a_{i_1,1}(x_1)-\frac{1}{2}a_{i_2,2}(x_2)+\frac{1}{8}\right)\\
&+\tilde{v}(y)\left(a_{i_1,2}(x_1)+a_{i_2,2}(x_2)-1\right)\,,
\end{align*}
\begin{align*}
III=& \bigg[\tilde{v}(y)-\partial_{x_1}\tilde{v}(y)h + \frac{1}{2}\partial_{x_1}^2\tilde{v}(y)h^2+\mathcal{O}(h^3)\bigg]\left(a_{i_1,3}(x_1)-\frac{1}{2}a_{i_2,2}(x_2)+\frac{1}{8}\right)\\
&+\bigg[\tilde{v}(y)+\left(\partial_{x_1}\tilde{v}(y)-\partial_{x_2}\tilde{v}(y)\right)h + \left(\frac{1}{2}\partial_{x_1}^2\tilde{v}(y)-\partial_{x_1}\partial_{x_2}\tilde{v}(y)+ \frac{1}{2}\partial_{x_2}^2\tilde{v}(y)\right)h^2\\
&\quad +\mathcal{O}(h^3)\bigg]\left(-\frac{1}{2}a_{i_1,1}(x_1)-\frac{1}{2}a_{i_2,3}(x_2)+\frac{1}{8}\right)\,,
\end{align*}
\begin{align*}
IV = &\bigg[\tilde{v}(y)-\partial_{x_2}\tilde{v}(y)h + \frac{1}{2}\partial_{x_2}^2\tilde{v}(y)h^2+\mathcal{O}(h^3)\bigg]\left(-\frac{1}{2}a_{i_1,2}(x_1)+a_{i_2,3}(x_2)+\frac{1}{8}\right)\\
& +\bigg[\tilde{v}(y)-\left(\partial_{x_1}\tilde{v}(y)+\partial_{x_2}\tilde{v}(y)\right)h + \left(\frac{1}{2}\partial_{x_1}^2\tilde{v}(y)+\partial_{x_1}\partial_{x_2}\tilde{v}(y)+ \frac{1}{2}\partial_{x_2}^2\tilde{v}(y)\right)h^2\\
&\quad +\mathcal{O}(h^3)\bigg]\left(-\frac{1}{2}a_{i_1,3}(x_1)-\frac{1}{2}a_{i_2,3}(x_2)+\frac{1}{8}\right)\, .
\end{align*}

We rearrange the above terms $I-IV$, use the identity \eqref{prop_cancel} and obtain
\begin{align}\label{taylor1}
\nonumber
 \sum_{k_1,k_2=-1}^{1}  & \tilde{v}(y_{(k_1,k_2)})   \left(\overline{\bfphi}_{\mathrm{glob}_{\ui}(\uk)}(x)-\bfphi_{\mathrm{glob}_{\ui}(\uk)}(x)\right)
\\
&=0 \cdot \Big[\tilde{v}(y) + \left(\partial_{x_1}\tilde{v}(y)  + \partial_{x_2}\tilde{v}(y)\right)h + \partial_{x_1}\partial_{x_2}\tilde{v}(y) h^2 \Big]
\\ \nonumber
&\quad + \frac{1}{2}\partial_{x_1}^2\tilde{v}(y)\bigg[-a_{i_2,1}(x_2) -a_{i_2,2}(x_2) -a_{i_2,3}(x_2)+\frac{3}{4}\bigg]h^2
\\ \nonumber
&\quad +\frac{1}{2}\partial_{x_2}^2\tilde{v}(y)\bigg[-a_{i_1,1}(x_1) -a_{i_1,2}(x_1) -a_{i_1,3}(x_1)+\frac{3}{4}\bigg]h^2
+\mathcal{O}(h^3)
\\ \nonumber
&=\mathcal{O}(h^3)\,.  
\end{align}
Hence, we substitute \eqref{taylor1} into \eqref{difx} to conclude that
\begin{align}\label{rh_p2} 
\DD \left\|\overline{R}_h[R_h v]-R_h v\right\|_{\Lp}
 & =\Big\|\frac{8}{3h^2}\sum_{\uj\in\{1,\ldots,J\}^d}\frac{1}{|\D_\uj|}\int_{\D_\uj}\mDeltaD^{-1} v(y) \left(\overline{\bfphi}_\uj(\cdot)-\bfphi_\uj(\cdot)\right)\d y \Big\|_{\Lp}
\\
\nonumber
& = Ch\,.
\end{align}

Finally, by the triangle inequality we estimate
\begin{align}\label{Proof_tildeRh_Est_1}
\|v- R_h v\|_{\Lp}&\le \|v- \overline{R}_h[R_h v]\|_{\Lp}+\|\overline{R}_h[R_h v]- R_h v\|_{\Lp}\,,
\end{align}
and the statement follows by \eqref{rh_p1} and \eqref{rh_p2}.
\end{proof}


\begin{comment}
\begin{proof}
On noting the property $\overline{R}_hv = \overline{R}_h(R_hv)$ and $R_h v\in \Vh\subset \Lp$ we obtain for all $v\in \Lp$ that\footnote{second term is not converging?}
\begin{align*}
\|v- R_h v\|_{\Lp} \le \|v- \overline{R}_h(R_h v)\|_{\Lp} + \|\overline{R}_h(R_h v)  - R_h v\|_{\Lp}\,.
\end{align*}
The statement then follows from \eqref{orh_conv}.
\end{proof}
\end{comment}

The above lemma allows us to deduce the density of $\{\mathbb{V}_h\}_{h>0}$ in $\mathbb{L}^p$.
\begin{cor}[Approximation property of $\Vh$]\label{cor_rh} 
For every $v\in \Lp$, $p\geq1$ it holds that
$$
\inf_{v_h\in \Vh}\|v-v_h\|_{\Lp}\rightarrow 0 \quad \text{ for } h\rightarrow 0.
$$
\end{cor}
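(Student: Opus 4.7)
The plan is to chain the lemma about convergence on the finite-dimensional spaces $\mathbb{V}_n$ with the density of $\bigcup_{n\in\mathbb{N}}\mathbb{V}_n$ in $\Lp$ that was already invoked in the preamble of Lemma~\ref{lem_rh_conv}. Concretely, given $v\in\Lp$ and $\varepsilon>0$, I would first choose $n\in\mathbb{N}$ and $v_n\in\mathbb{V}_n$ (a finite linear combination of the Dirichlet eigenfunctions \eqref{Fourier_eb}) with
\[
\|v-v_n\|_{\Lp}<\varepsilon/2.
\]
Such a $v_n$ exists because smooth functions vanishing on $\partial\D$ are dense in $\Lp$ for $p\in[1,\infty)$ and, on the rectangle $\D=(-L,L)^2$, each such function can be approximated uniformly (hence in $\Lp$ since $\D$ is bounded) by its truncated Fourier–sine series in the basis $\{e_k\}$.

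Next, since $v_n\in\mathbb{V}_n$, I would apply Lemma~\ref{lem_rh_conv} to obtain $h_0>0$ such that for all $h<h_0$
\[
\|v_n-R_h v_n\|_{\Lp}<\varepsilon/2.
\]
Setting $v_h:=R_h v_n\in\mathbb{V}_h$ and using the triangle inequality, this yields
\[
\inf_{w_h\in\Vh}\|v-w_h\|_{\Lp}\le\|v-R_h v_n\|_{\Lp}\le\|v-v_n\|_{\Lp}+\|v_n-R_h v_n\|_{\Lp}<\varepsilon,
\]
for all $h<h_0$. Since $\varepsilon>0$ is arbitrary, this gives the conclusion.

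The only non-routine step is the density of $\bigcup_{n}\mathbb{V}_n$ in $\Lp$; the rest is just a ``$\varepsilon/2$-argument'' built on Lemma~\ref{lem_rh_conv}. That density is standard but does deserve a sentence of justification, since the eigenfunctions vanish on $\partial\D$ and the Fourier series in general only converges in $\Ltwo$; the rescue is that $C_c^\infty(\D)$ is dense in $\Lp$ for $p<\infty$, and for $\varphi\in C_c^\infty(\D)$ the partial sums of its eigenfunction expansion converge to $\varphi$ in $C^0(\overline{\D})$ (hence in $\Lp$ on the bounded domain $\D$) by classical regularity of Fourier series of smooth, compactly supported functions on a rectangle. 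No new machinery beyond this is needed.
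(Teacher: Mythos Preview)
Your proposal is correct and is essentially identical to the paper's proof: both pick an approximant $v_\varepsilon\in\mathbb{V}_n$ by the density of $\bigcup_n\mathbb{V}_n$ in $\Lp$, invoke Lemma~\ref{lem_rh_conv} to get $\|v_\varepsilon-R_hv_\varepsilon\|_{\Lp}\to0$, and conclude via the triangle inequality with $v_h=R_hv_\varepsilon$. The only difference is that you spell out the $\varepsilon/2$ bookkeeping and add a line justifying the density step, which the paper simply asserts.
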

\begin{proof}
Consider $v_\varepsilon \in \mathbb{V}_n$ and note that $\lim_{h\rightarrow 0} \|v_\varepsilon-R_h v_\varepsilon \|_{\Lp} = 0$ by Lemma~\ref{lem_rh_conv}.
Since $R_hv_\varepsilon \in \Vh$ we get
$$
\inf_{v_h\in \Vh}\|v-v_h\|_{\Lp}\le \|v-R_h v_\varepsilon \|_{\Lp} \leq \|v- v_\varepsilon \|_{\Lp} + \|v_\varepsilon-R_h v_\varepsilon \|_{\Lp}.
$$
The statement then follows by the density of $\cup_{n\in\mathbb{N}}\mathbb{V}_n$ in $\mathbb{L}^p$.
\end{proof}

The restriction operator \eqref{def_rh} is not implementable since it requires the evaluation of the function $\mDeltaD^{-1} v$,
which is not available in general.
For practical purposes (e.g., to compute the discrete approximation of the initial condition) it is convenient to consider the
discrete $\Hmone$-projection $P_h:\Hmone\rightarrow \Vh$ which is defined for $v\in \Hmone$ as follows
\begin{equation}\label{Proj}
(P_hv,w_h)_{\Hmone}=(v,w_h)_{\Hmone}\quad\forall w_h\in \Vh.
\end{equation}

\begin{rem}\label{rem_conv_ph}
The  $\Hmone$-stability of the orthogonal projection, i.e., $\|P_hv\|_{\Hmone} \leq C \|v\|_{\Hmone}$ follows 
on taking $w_h = P_h v$ in \eqref{Proj} and using the Cauchy-Schwarz and Young's inequalities.
Furthermore, we note that \eqref{Proj} is equivalent
to $P_hv = \arg\min_{w_h\in\mathbb{V}_h} \|v-w_h\|^2_{\Hmone}$ which in particular implies that $P_h(R_hv)=R_hv$
for $v\in\Lp$.

Consequently, the $\Hmone$-stability of $P_h$ 
and the continuous embedding $\Lp[p] \hookrightarrow \Hmone$, for $p\geq 2$ yield for all $v\in \Lp[p]$, $v_\varepsilon \in \mathbb{V}_n$
\begin{align*}
\|v- P_h v\|_{\Hmone} & \leq \|v- v_\varepsilon\|_{\Hmone} + \|v_\varepsilon - P_h v_\varepsilon\|_{\Hmone} + \|P_h (v_\varepsilon -  v)\|_{\Hmone} 
\\
& \leq C\|v- v_\varepsilon\|_{\Hmone} + \|v_\varepsilon- R_h v_\varepsilon\|_{\Hmone} + \|P_h (v_\varepsilon- R_h v_\varepsilon)\|_{\Hmone}
\\
& \leq C\|v- v_\varepsilon\|_{\Hmone} + C\|v_\varepsilon- R_h v_\varepsilon\|_{\Lp}\,.
\end{align*}
Hence, by Lemma~\ref{lem_rh_conv}, the density of $\Lp[p]$, $p\geq 2$ in $\Hmone$ and the density of $\cup_{n\in\mathbb{N}}\mathbb{V}_n$ in $\Lp[p]$ 
we conclude the approximation property of the $\Hmone$-orthogonal projection:
$$
\lim_{h\rightarrow 0} \|v- P_h v\|_{\Hmone} = 0\qquad \forall v \in \Hmone\,.
$$
\end{rem}

\section{Numerical Experiments}\label{sec_num_exp}

\subsection{Convergence of the projection in $d=2$}\label{sec_ph}

We study the experimental $\mathbb{L}^p$-convergence
of the $\mathbb{H}^{-1}$-projection operator \eqref{Proj}
as well as of an implementable  counterpart $\widetilde{R}_h:\Lp\rightarrow \Vh$ 
of the restriction operator \eqref{def_rh} defined as
\begin{equation*}
\widetilde{R}_h v(x):=\sum_{\ui\in\{1,\ldots,J\}^d} \tilde{v}_\ui \bfphi_\ui(x) \,,
\end{equation*}
where $\tilde{v}_\ui = \DD \frac{8}{3h^2} [(-\Delta^9_h)^{-1} \overline{R}_h v]_\ui$.
I.e., the coefficients are the solutions of finite difference scheme
$$
-\Delta_h^9 \left(\frac{3h^2}{8} \tilde{v}_{\ui}\right) = \overline{R}_h v|_{\D_\ui}\,,
$$
for $\ui \in \{1,\dots,J\}^2$; we note that it holds by construction that $\overline{R}_h \widetilde{R}_h v = \overline{R}_h v$ and $\widetilde{R}_h \overline{R}_h v = \widetilde{R}_h v$.

In Figure~\ref{fig_ph_conv} we display the convergence plot of the $\mathbb{H}^{-1}$-projection
of the Barenblatt solution $P_h u_B(t,\cdot)$ at $t=0.1$ (see \eqref{barenblatt} below) along with the convergence plot of $P_h \chi_{(-0.5,0.5)^2}$ of the (non-smooth) 
indicator function of the $(-0.5,0.5)^2$-square; in both cases $\D=(-1.5,1.5)^2$.
The convergence plot implies convergence of the projection in $\mathbb{L}^p$ of order $h$ for the smooth Barenblatt function and
of order of $h^{2/3}$ in the non-smooth case.

In addition we display in Figure~\ref{fig_ph_conv}
the convergence plot of the restriction operator $\widetilde{R}_h$ for the indicator function $\chi_{(-0.5,0.5)^2}$
which is also of order $h^{2/3}$.

\begin{figure}[htp!]
\centering
\includegraphics[width=0.48\textwidth]{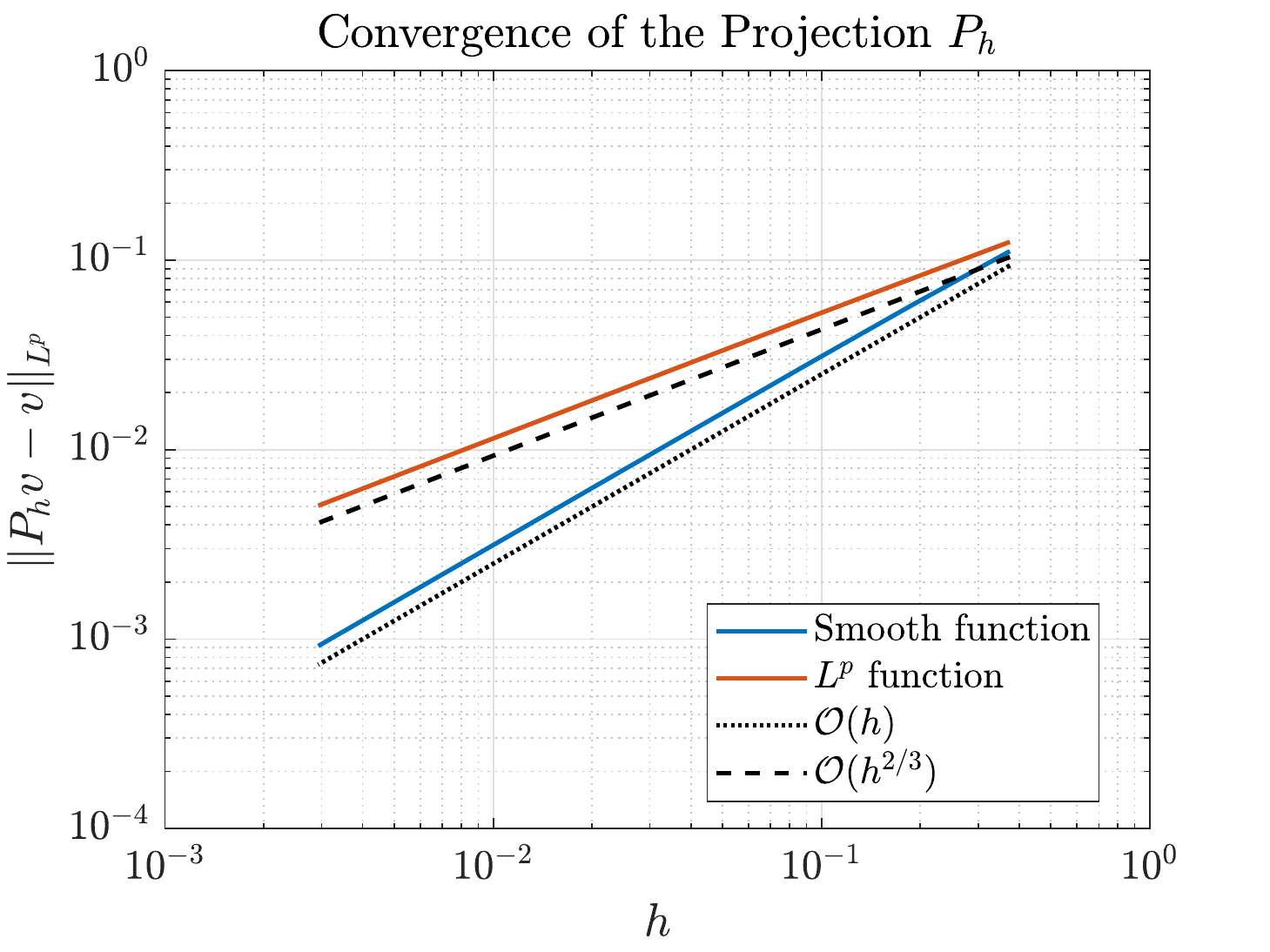}
\includegraphics[width=0.48\textwidth]{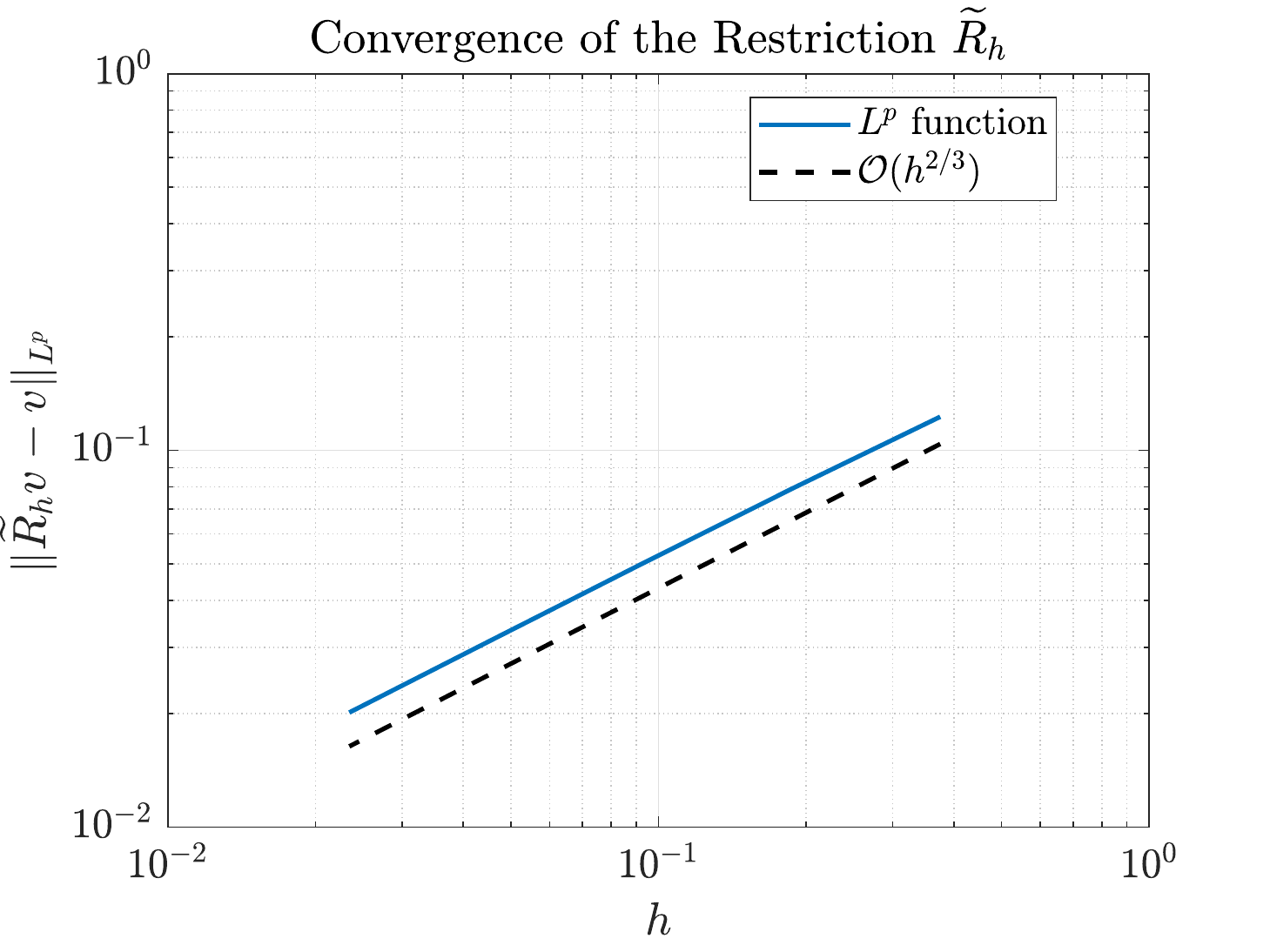}
\caption{Convergence of the $\mathbb{H}^{-1}$-projection (left) and of the restriction operator $\widetilde{R}_h$ (right).}
\label{fig_ph_conv}
\end{figure}

\subsection{Barenblatt solution for the deterministic PME}\label{subsec_barenblatt}

We consider the equation \eqref{spme0} with $\alpha(u)=|u|^{p-2}u$, $f\equiv 0$, $g\equiv 0$, $\sigma\equiv 0$
which corresponds to the deterministic porous medium equation
$$ 
\partial_t u = \Delta (|u|^{p-2}u).
$$ 
The exact solution of the porous media equation with initial condition $u_0=\delta_0$ (i.e., the $\delta$-distribution centered at $0$)
 the so-called Barenblatt solution 
\begin{equation}\label{barenblatt}
u_B(t,x)=t^{-a}\max\left\{0,C-k|x|^2t^{-2b}\right\}^{1/(p-2)},
\end{equation}
where $a\,,\,b,\,k,\,C$ are suitable constants that depend on $p$, $d$, c.f.~\cite[Ch. 17.5]{Vazquez}.

In the experiments below we choose $\D=(-1.5,1.5)^d$, $d=1,2$ and $T=0.1$, $p=3$.
We consider a regularized initial condition $u_0 = \delta_0 \approx \tilde{u}_{h,0} \in \overline{\mathbb{V}}_h$ with
$$
\tilde{u}_{h,0}(x)=\frac{1}{(2h)^d}\begin{cases}1 &\text{if } x\in\D_\ui,\ j\in \left\{\frac{J}{2},\frac{J}{2}+1\right\}^d \\0&\text{else}
\end{cases}
$$
and set $u_{h,0}=P_h(\tilde{u}_0)\in \Vh$.

We examine the convergence of the numerical approximation with respect to $\tau$, $h$ in the $L^p$-norm, i.e.,
we compute the error $\| u_B-\utau\|_{L^p([\underline{t},T]\times \D)}$ with
time-interval $[\underline{t},T]=[0.01,0.1]$ where we choose $\underline{t}>0$ to reduce the effect of the approximation of the initial condition.

In Table~\ref{tab_1d} we display the $L^p$-error for $\tau=1/N$, $h=2L/J$ in $d=1$.
The corresponding convergence plots in Figure~\ref{fig_conv1d} indicate that the convergence
order of the numerical approximation with respect to $\tau$ is slightly less than one  and around $\frac{3}{2}$ with respect to $h$.
\begin{table}[h]
\begin{tabular}{l c c c c c c}
$N$ $\setminus$ $J$ & $8$ & $16$ & $32$ & $64$ & $128$ & $256$\\
$8$ & 0.083032 & 0.02221 & 0.020881 & 0.024805 & 0.025878 & 0.025931\\
$16$ & 0.07524 & 0.016254 & 0.015419 & 0.015481 & 0.015893 & 0.016162\\
$32$ & 0.075711 & 0.017544 & 0.0070398 & 0.0088919 & 0.0094508 & 0.0094772\\
$64$ & 0.077172 & 0.021771 & 0.0054151 & 0.0051293 & 0.0056912 & 0.0059705\\
$128$ & 0.077702 & 0.022649 & 0.0060452 & 0.0028429 & 0.0033319 & 0.0035322\\
$256$ & 0.077591 & 0.022801 & 0.006569 & 0.002342 & 0.0017761 & 0.0019579\\
$512$ & 0.077532 & 0.022934 & 0.0069462 & 0.002467 & 0.0011016 & 0.0010656\\
$1024$ & 0.077593 & 0.023061 & 0.007187 & 0.0025917 & 0.00099377 & 0.00060724
\end{tabular}
\caption{$L^p((0.01,0.1)\times\D)$-error of the solution, $d=1$.}
\label{tab_1d}
\end{table}

\begin{figure}[htp!]
\centering
\includegraphics[width=0.48\textwidth]{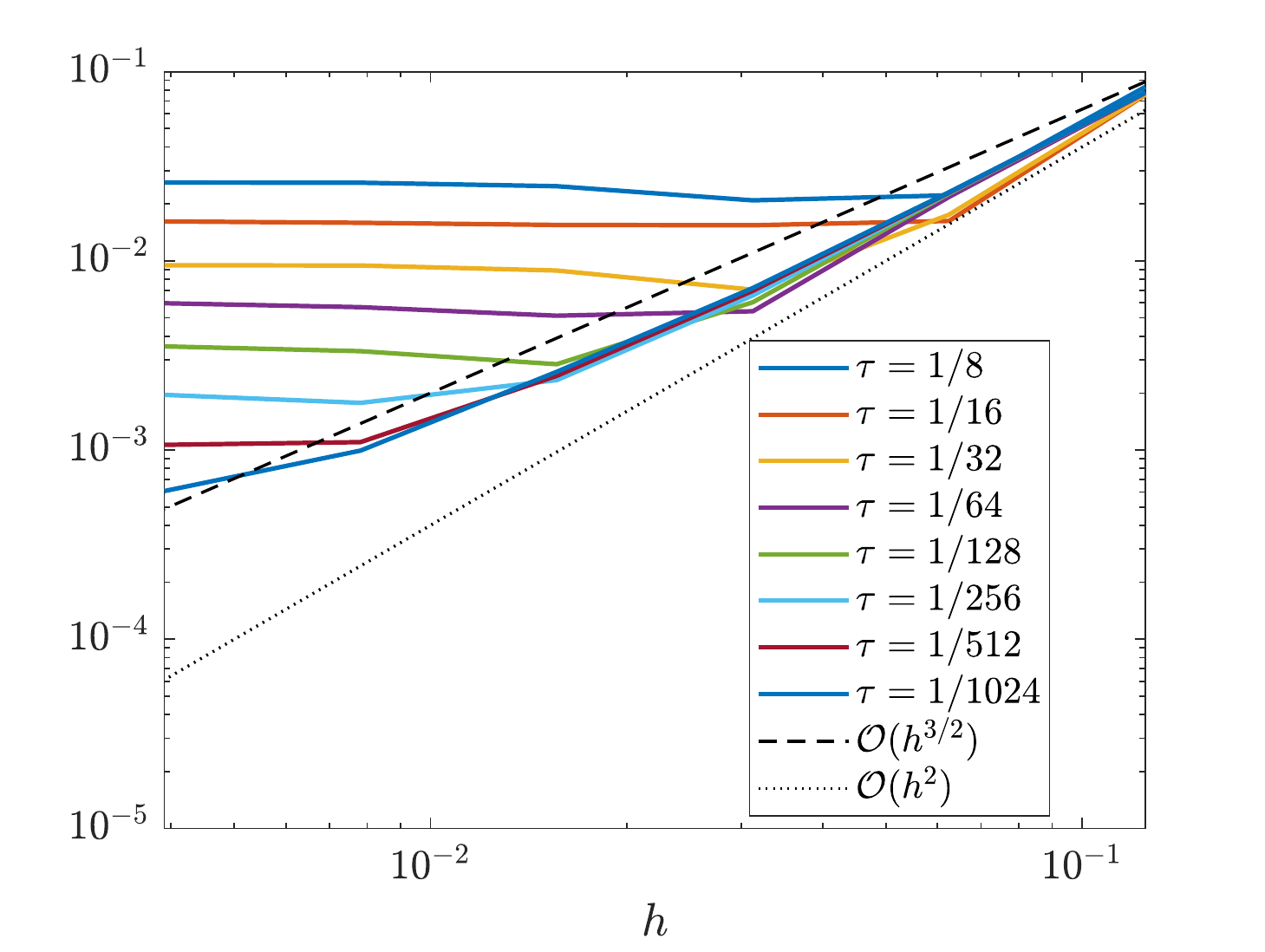}
\hfill
\includegraphics[width=0.48\textwidth]{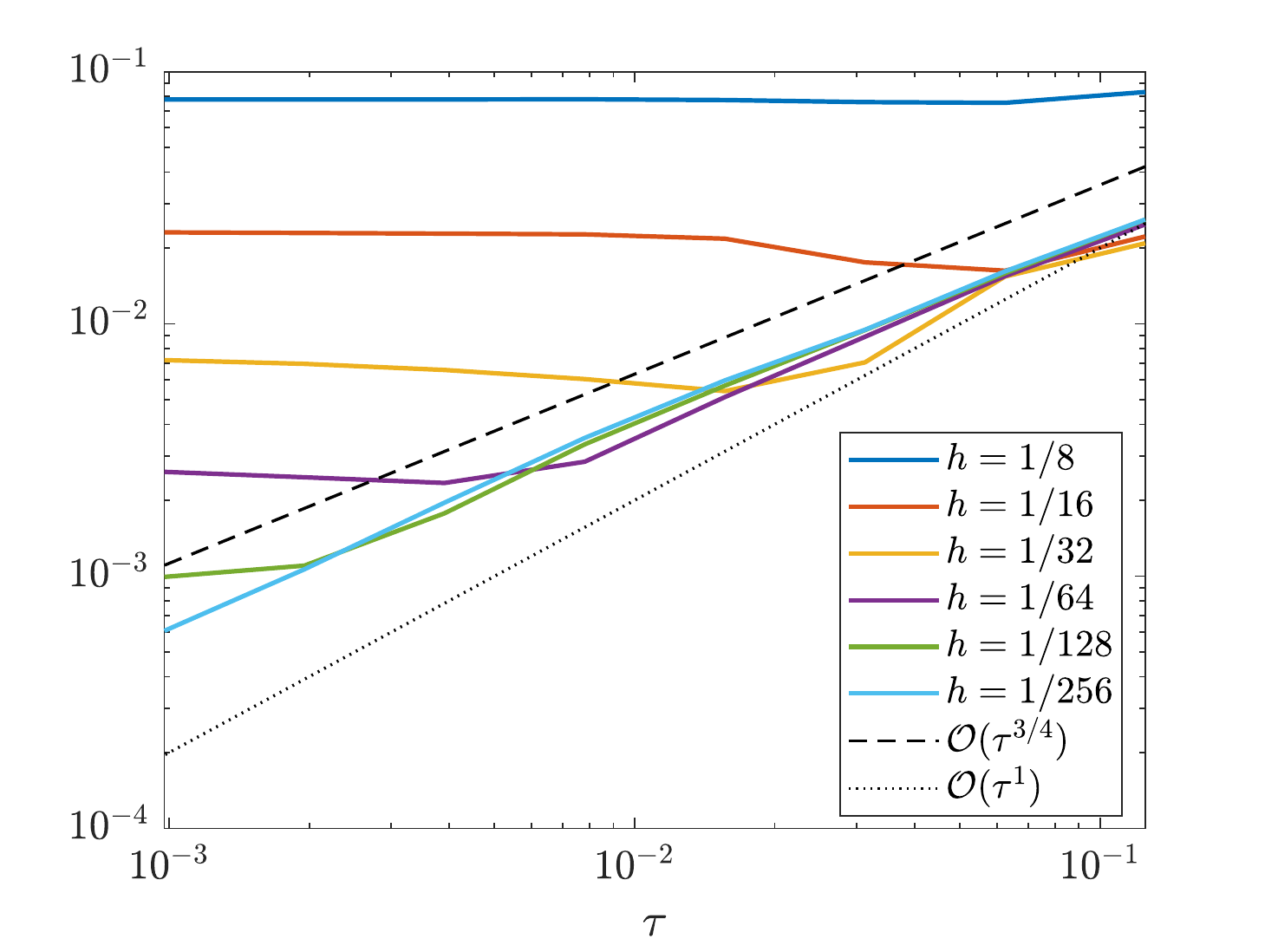}
\caption[Convergence for the deterministic equation in $1d$]{Convergence of the $L^p((0.01,0.1)\times \D)$-error of the solution for the deterministic equation in $1d$. Left: convergence of the spacial discretization for different time step-sizes, right: convergence of the time discretization for different spacial step-sizes}
\label{fig_conv1d}
\end{figure}

To highlight the finite speed of propagation property on the discrete level
we display the evolution of the support of the numerical approximation 
in Figure~\ref{fig_support_1d}.
\begin{figure}[htp!]
\begin{subfigure}[c]{1\textwidth}
\centering
\includegraphics[scale=0.35]{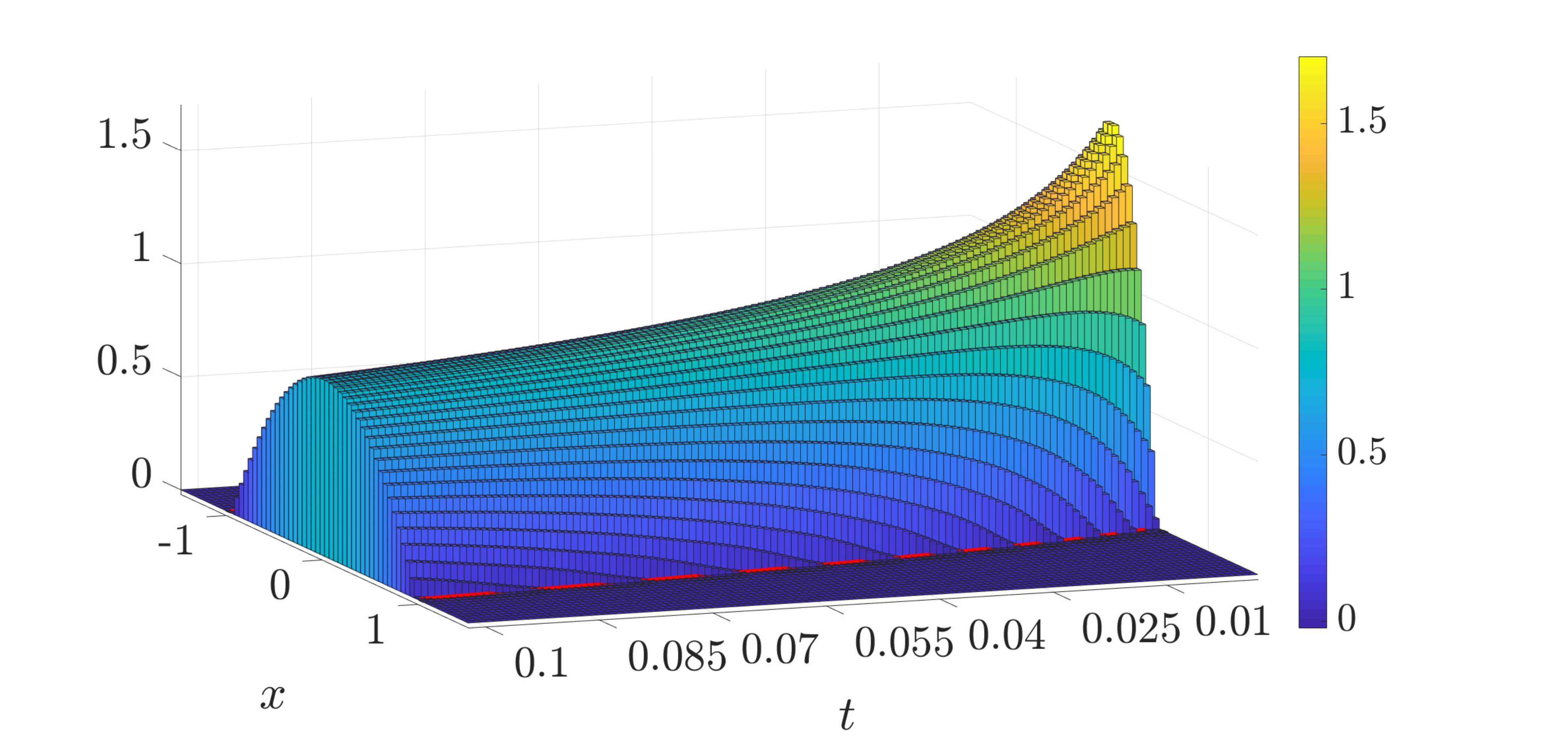}
\end{subfigure}
\begin{subfigure}[c]{1\textwidth}
\centering
\includegraphics[scale=0.35]{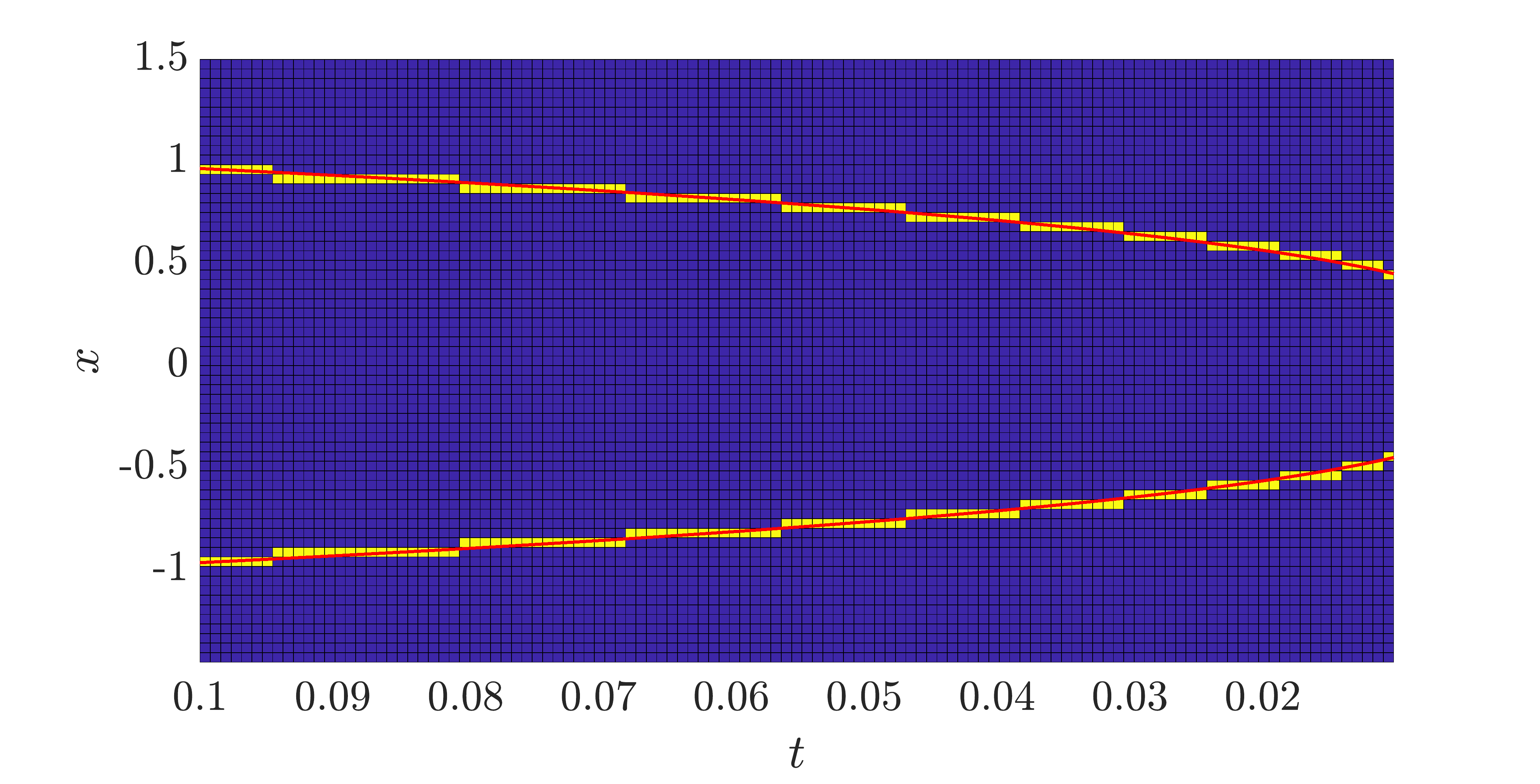}
\end{subfigure}
\caption{(top) Time evolution of the numerical solution for $J = 64$, $N = 128$ in $d=1$; (bottom)
the corresponding support of the numerical solution, in yellow, and the support of the analytical solution, in red.}
\label{fig_support_1d}
\end{figure}

Next we examine the convergence behaviour in $d=2$, we note that in this case $u_0\notin \mathbb{H}^{-1}$.
In Table~\ref{tab_2d} we display the $L^p$-error computed for $\tau=1/N$, $h=2L/J$.

The corresponding convergence plots in Figure~\ref{fig_conv2d} indicate that the convergence
order of the numerical approximation with respect to $\tau$ and $h$ are both close to one.
As expected, (due to the lower regularity of the initial condition in $d=2$) the observed convergence order of the spatial discretization
is slightly worse than the corresponding convergence order for $d=1$.
We display the time evolution of the numerical solution in Figure~\ref{fig_sol2d}
and a detail of the numerical solution at $T=0.1$, $d=2$ is displayed in Figure~\ref{fig_barsol2d}.

\begin{table}[h]
\begin{tabular}{l c c c c c c}
$N$ $\setminus$ $J$ & $8$ & $16$ & $32$ & $64$ & $128$ & $256$\\
$8$ & 0.092154 & 0.050998 & 0.045512 & 0.04534 & 0.04531 & 0.045288\\
$16$ & 0.094038 & 0.047956 & 0.032504 & 0.028832 & 0.027894 & 0.027644\\
$32$ & 0.095218 & 0.048104 & 0.026604 & 0.019108 & 0.016976 & 0.016402\\
$64$ & 0.098787 & 0.050731 & 0.026017 & 0.015466 & 0.011883 & 0.010852\\
$128$ & 0.10062 & 0.052414 & 0.026237 & 0.013883 & 0.0087862 & 0.0070965\\
$256$ & 0.10077 & 0.052773 & 0.026229 & 0.013247 & 0.0072316 & 0.0048007\\
$512$ & 0.10086 & 0.052981 & 0.026295 & 0.013092 & 0.0067107 & 0.0037732\\
$1024$ & 0.10109 & 0.05325 & 0.026433 & 0.013108 & 0.0065808 & 0.0034176
\end{tabular}
\caption{$L^p((0.01,0.1)\times\D)$-error of the solution, $d=2$.}
\label{tab_2d}
\end{table}

\begin{figure}[htp!]
\centering
\includegraphics[width=0.48\textwidth]{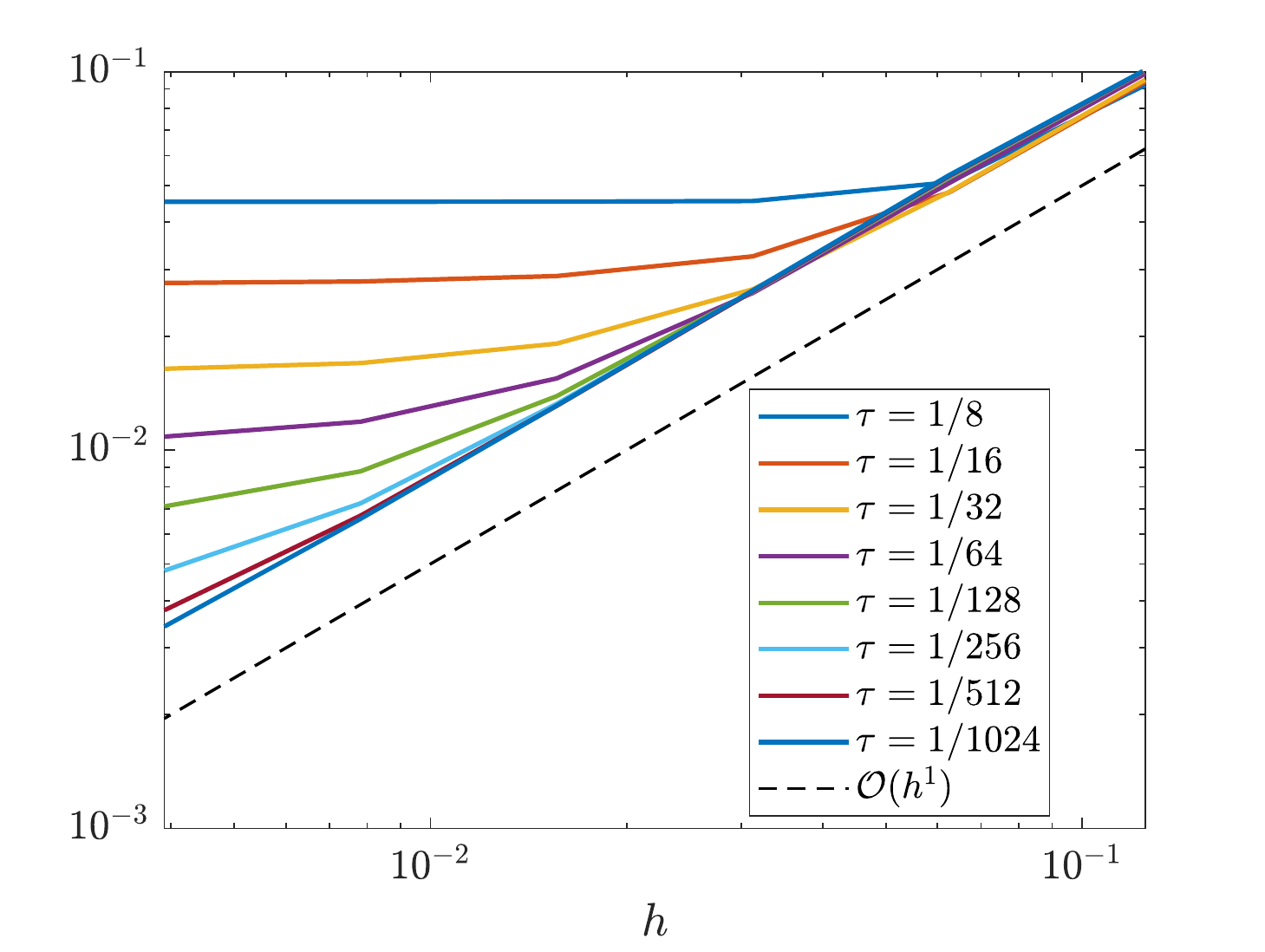}
\hfill
\includegraphics[width=0.48\textwidth]{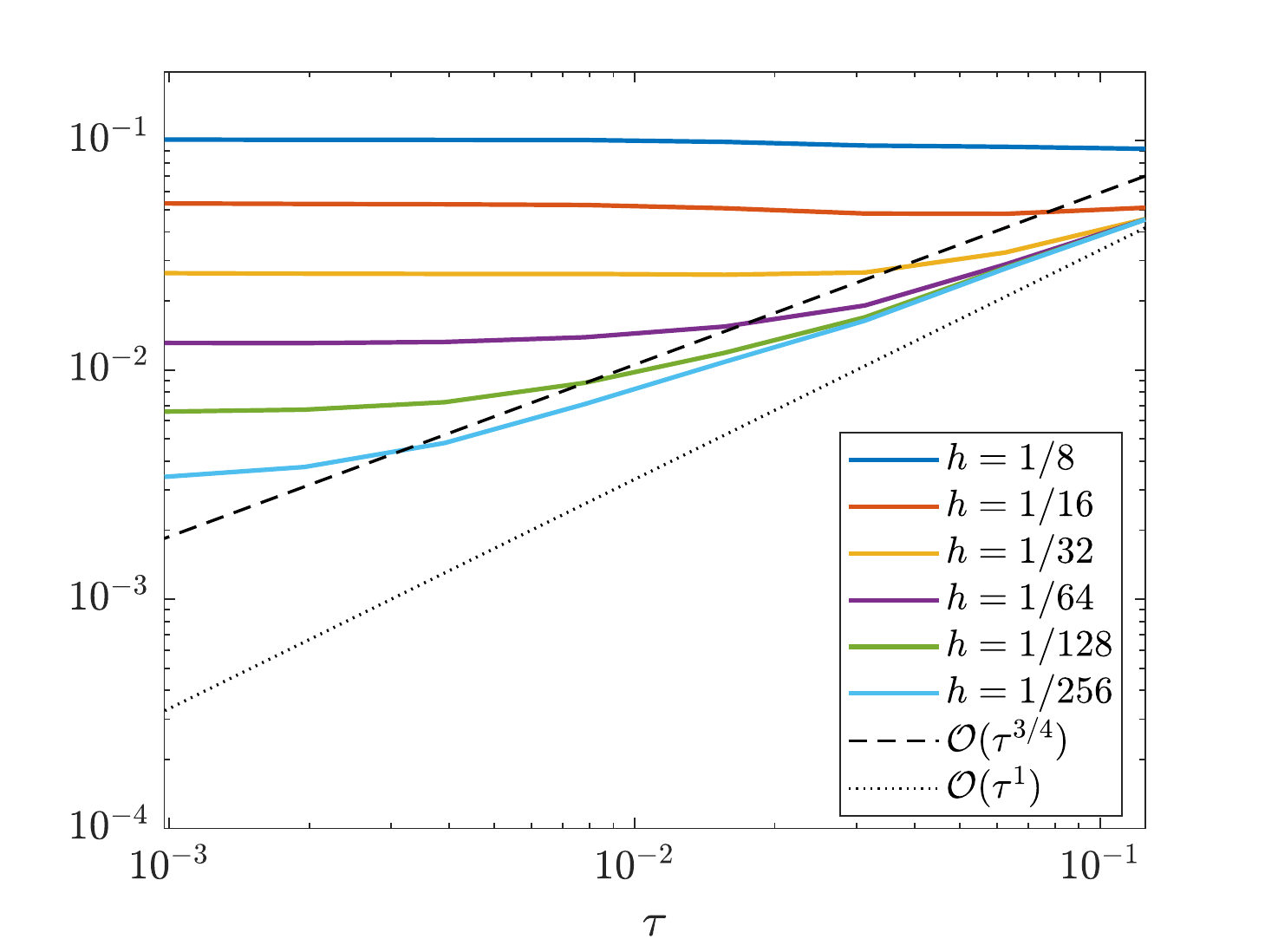}
\caption[Convergence for the deterministic equation in $2d$]{Convergence of the $L^p((0.01,0.1)\times \D)$-error of the solution for the deterministic equation in $2d$. 
Left: convergence of the discretization for different time step-sizes, right: convergence of the discretization for different mesh-sizes.}
\label{fig_conv2d}
\end{figure}

\begin{center}
\begin{figure}[htp!]
\centering
\begin{minipage}{0.49\textwidth}
\includegraphics[scale=0.5]{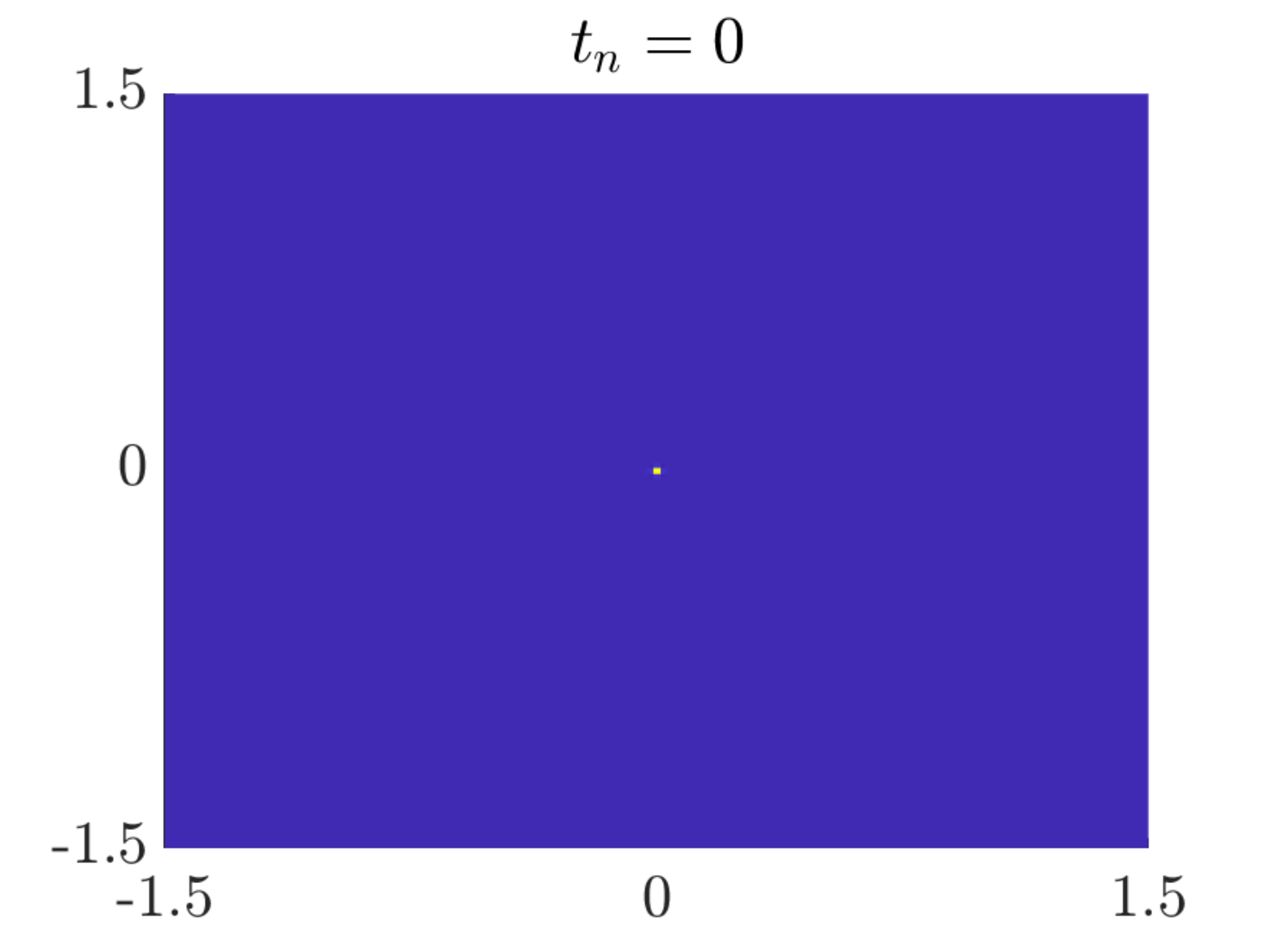}
\end{minipage}\\ \vspace{0.7cm}
\begin{minipage}{0.49\textwidth}
\includegraphics[scale=0.5]{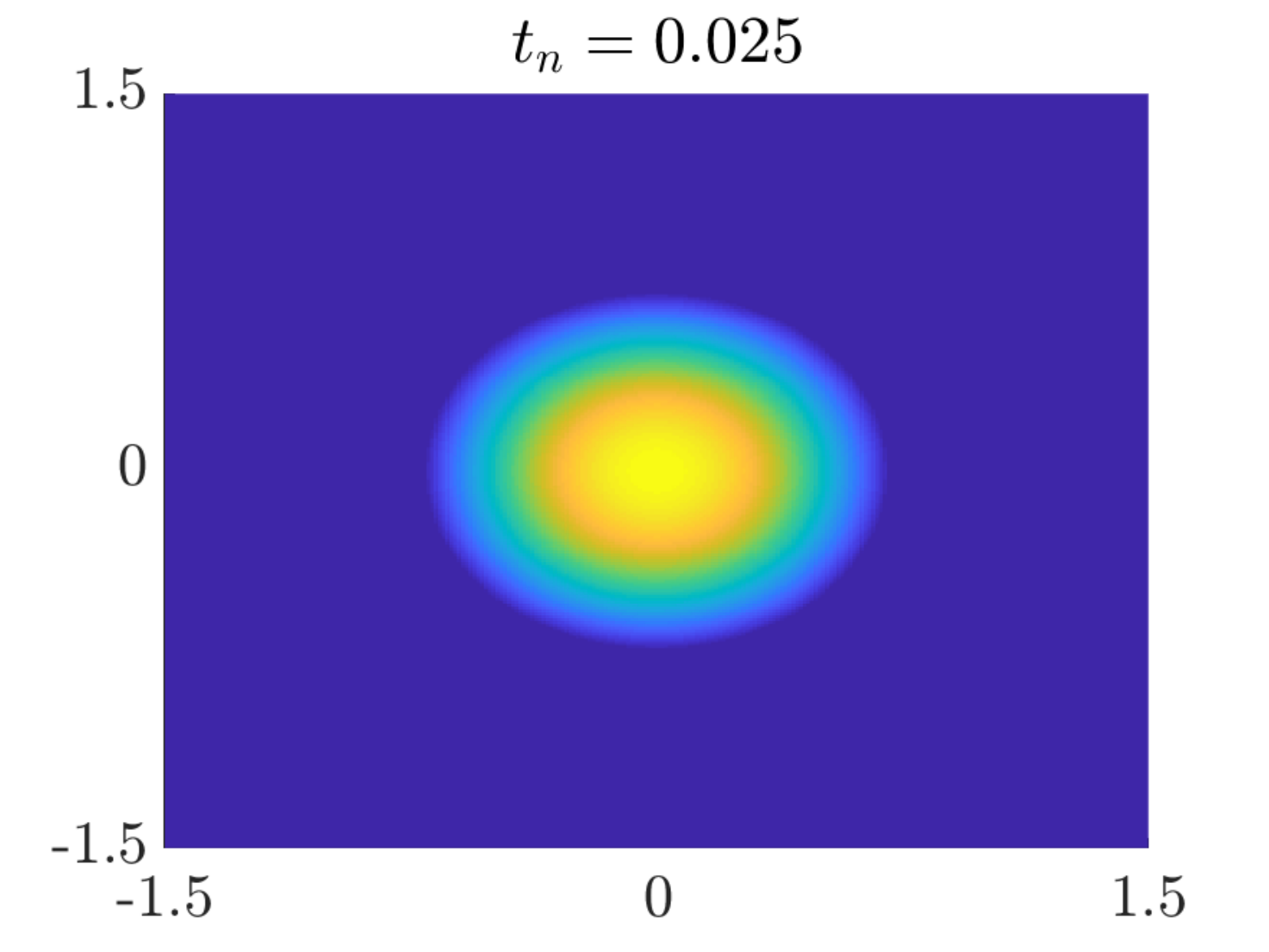}
\end{minipage}\begin{minipage}{0.49\textwidth}
\includegraphics[scale=0.5]{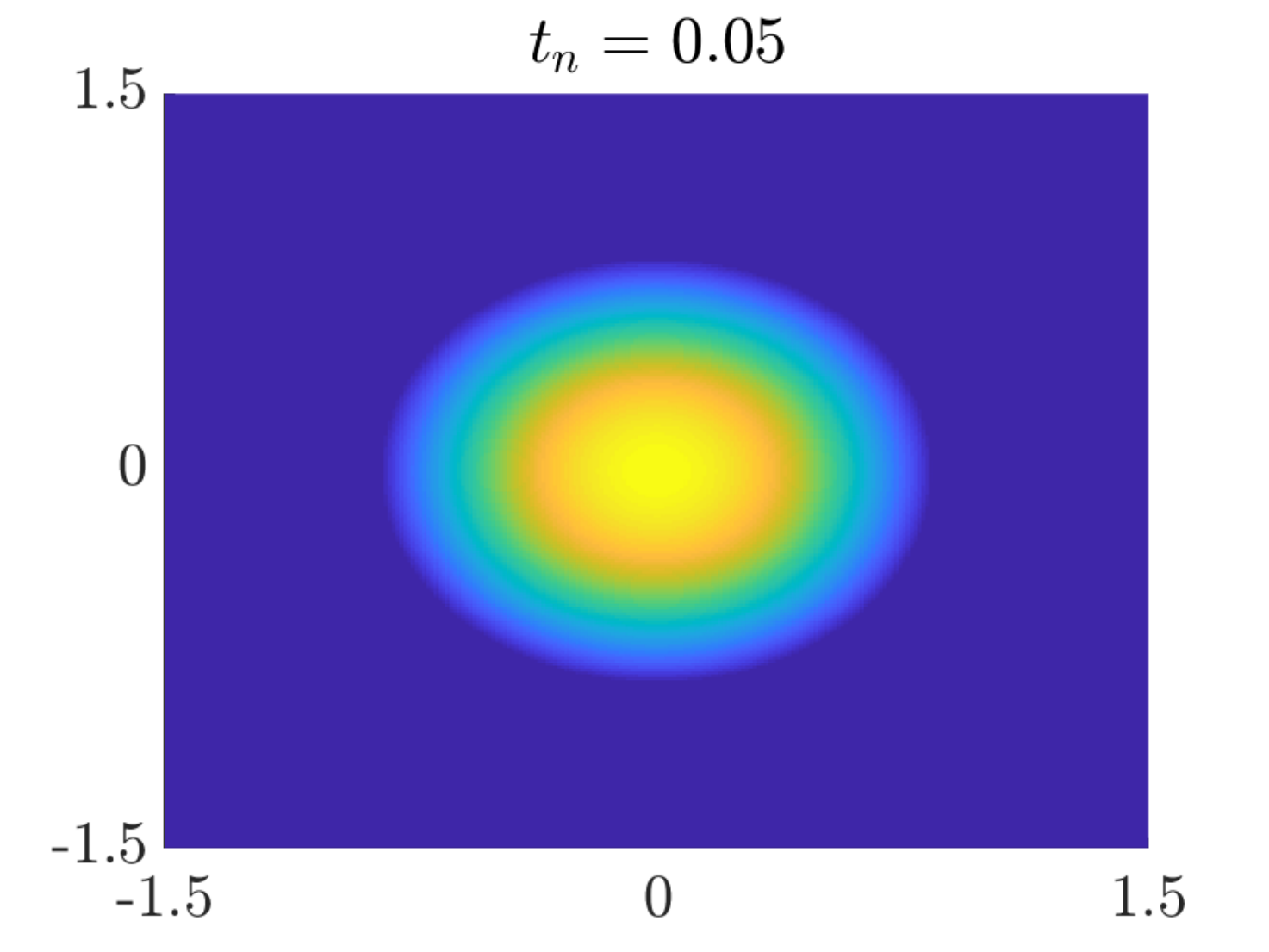}
\end{minipage}\\ \vspace{0.7cm}
\begin{minipage}{0.49\textwidth}
\includegraphics[scale=0.5]{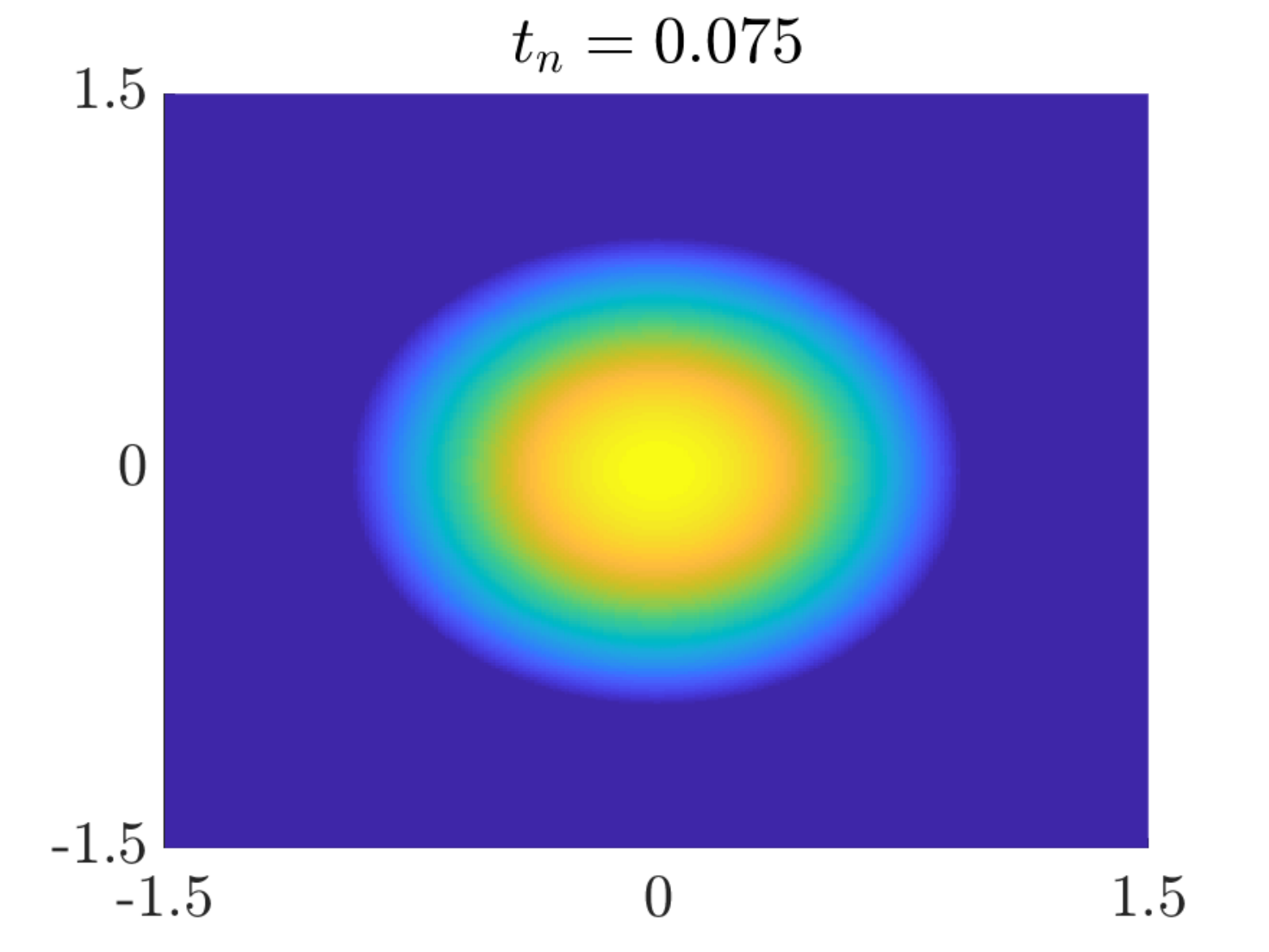}
\end{minipage}\begin{minipage}{0.49\textwidth}
\includegraphics[scale=0.5]{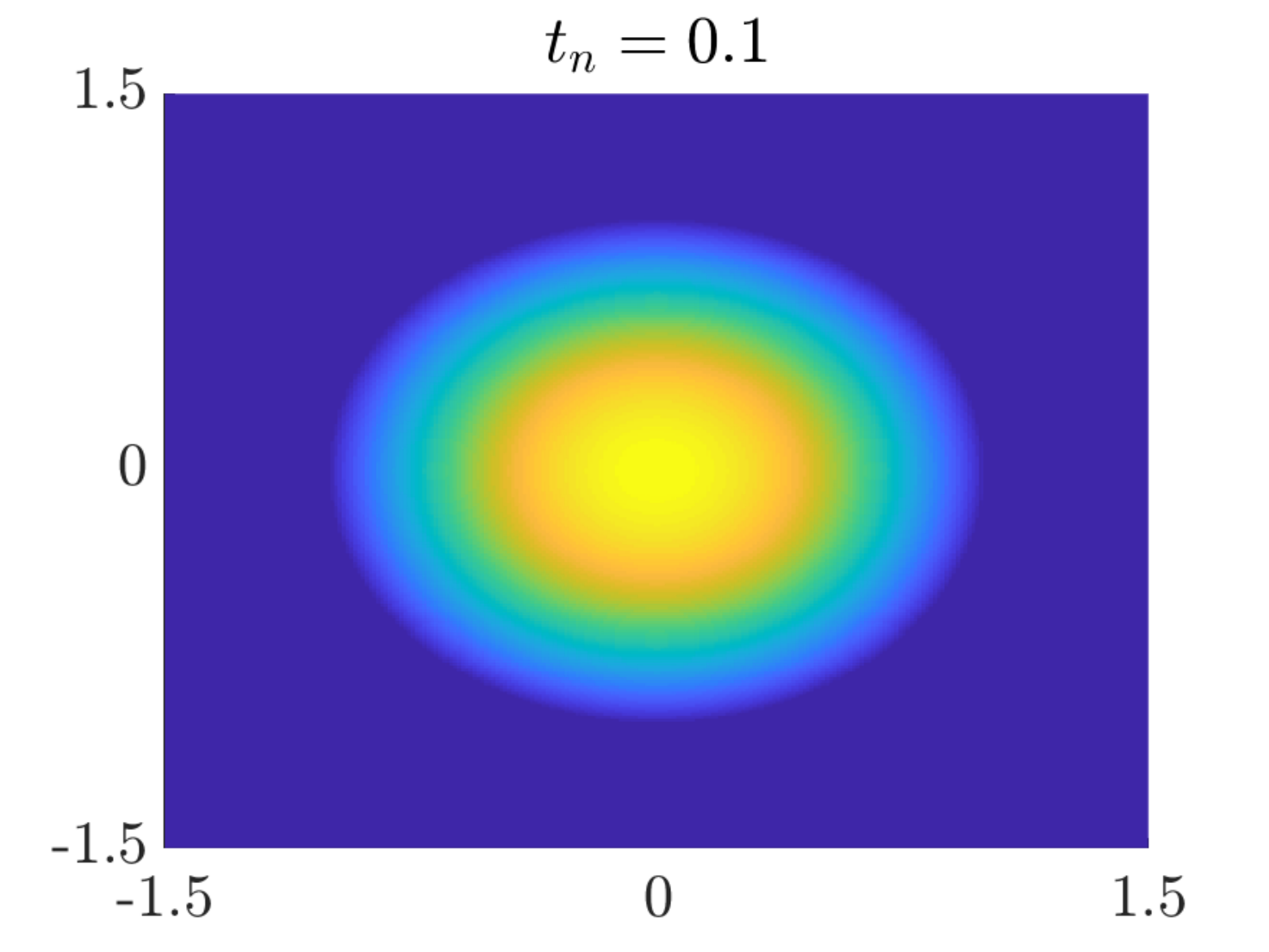}
\end{minipage}\\
\caption[Time evolution of the numerical appr. of the Barenblatt solution]{Snapshots of the numerical solution computed with $J=N=256$ at time $t = 0, 0.025, 0.05, 0.075, 0.1$.}
\label{fig_sol2d}
\end{figure}
\end{center}

\begin{figure}[htp!]
\centering
\includegraphics[trim=70 00 0 0, clip, scale=0.25]{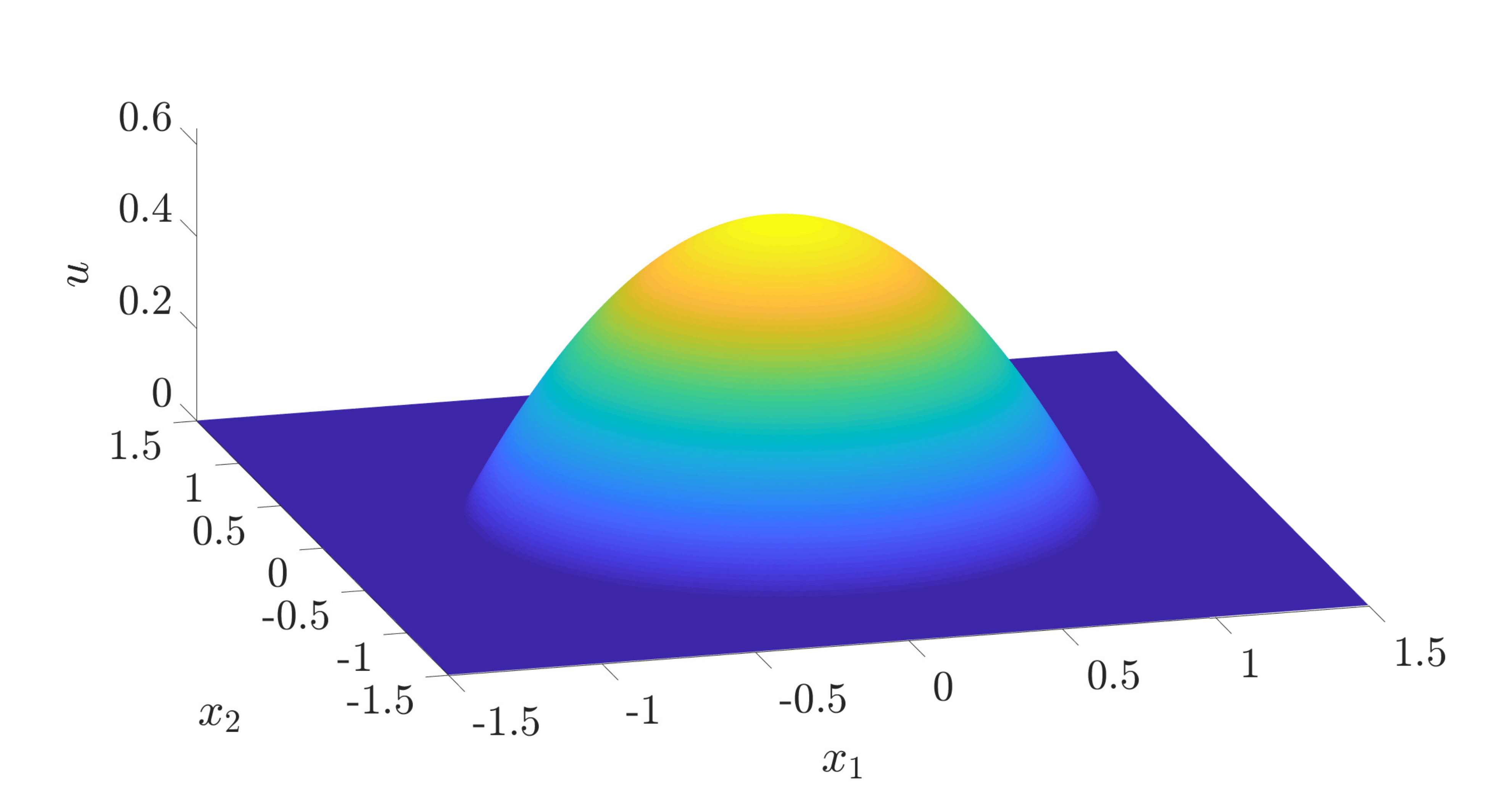}
\caption{Numerical approximation of the Barenblatt solution at time $t = T$ for $J=N=256$, $d=2$}
\label{fig_barsol2d}
\end{figure}

\subsection{Barenblatt solution for the stochastic PME}
Now we consider the stochastic equation with $\phi(u)=u$, set again $f,g=0$. Then from \cite[p. 87,88]{RoecknerSPME} and the references cited therein, we get, that for $d=1$, $\D=\mathbb{R}$ and $p=3$ the solution at time $t\in(0,T]$ is given by 
$$u_B\left(\int_0^t e^{W(s)-\frac{s}{2}}\d s, \cdot\right)e^{W(t)-\frac{t}{2}}, $$
where $u_B$ is the Barenblatt solution defined above. The support at time $t\in(0,T]$ are all $x\in \mathbb{R}$, such that
\begin{equation}\label{supp_anal_stoch}
|x|\le \sqrt{C\frac{2d(p-1)}{a(p-2)}}\left(\int_0^t e^{W(s)-\frac{s}{2}}\d s\right)^{a/d}= \sqrt{12 C} \sqrt[3]{\int_0^t e^{W(s)-\frac{s}{2}}\d s},
\end{equation}
with $C=C(d,p)$ as above. Hence we can cut-off the domain $\D$ at $\pm L$ such that $(-L,L)$ contains the support of the solution in $(0,T]$ for each considered path of $W$ - this is verified for each path during the simulation. For our simulations we take $L=1.5$.
\subsection{Numerical Results in $1d$ for the stochastic equation}
The norm of the error is computed as before, 
where in addition a Monte-Carlo approximation for the expected value is used.\\
In Table \ref{Tab:Results_spme_1d_stoch} and Figure \ref{Fig:Results_spme_1d_stoch} we see, that convergence with respect to $(\tau,h)$ also for the approximation stochastic Barenblatt solution holds. 

\begin{center}
\begin{table}[H]
\begin{tabular}{l c c c c c c}
$N$ $\setminus$ $J$ & $8$ & $16$ & $32$ & $64$ & $128$ & $256$\\
$8$ & 0.045434 & 0.022878 & 0.036722 & 0.040406 & 0.041319 & 0.041538\\
$16$ & 0.06444 & 0.01176 & 0.021544 & 0.025007 & 0.025851 & 0.026057\\
$32$ & 0.069643 & 0.012408 & 0.011412 & 0.014496 & 0.015296 & 0.01549\\
$64$ & 0.074465 & 0.016884 & 0.0065525 & 0.0087332 & 0.0095565 & 0.0097648\\
$128$ & 0.076666 & 0.020286 & 0.0057942 & 0.00491 & 0.0056214 & 0.0058322\\
$256$ & 0.077362 & 0.02164 & 0.0063097 & 0.0029489 & 0.0031048 & 0.0032878\\
$512$ & 0.077722 & 0.022367 & 0.0067772 & 0.0025539 & 0.0017667 & 0.0018573\\
$1024$ & 0.077983 & 0.022812 & 0.0070991 & 0.002603 & 0.0012162 & 0.0010803
\end{tabular}\\  \ \\
\caption{$L^p(\Omega\times (0.01,0.1) \times\D)$-error of the solution in $1d$, a Monte-Carlo Approximation with $10^6$ samples was used to approximate the expectation.}\label{Tab:Results_spme_1d_stoch}
\end{table}
\end{center}

\begin{figure}[H]
\centering
\includegraphics[width=0.48\textwidth]{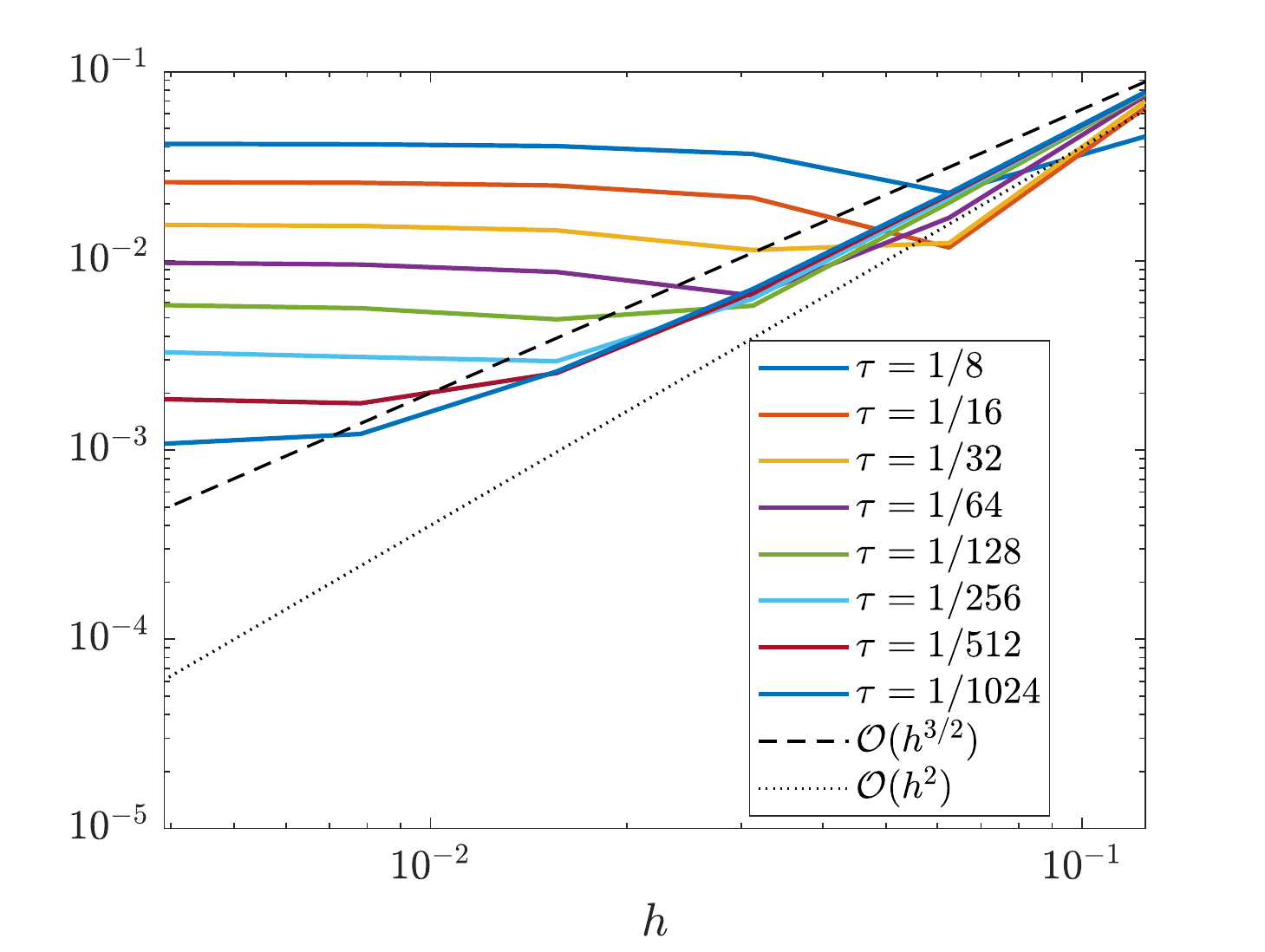}
\hfill
\includegraphics[width=0.48\textwidth]{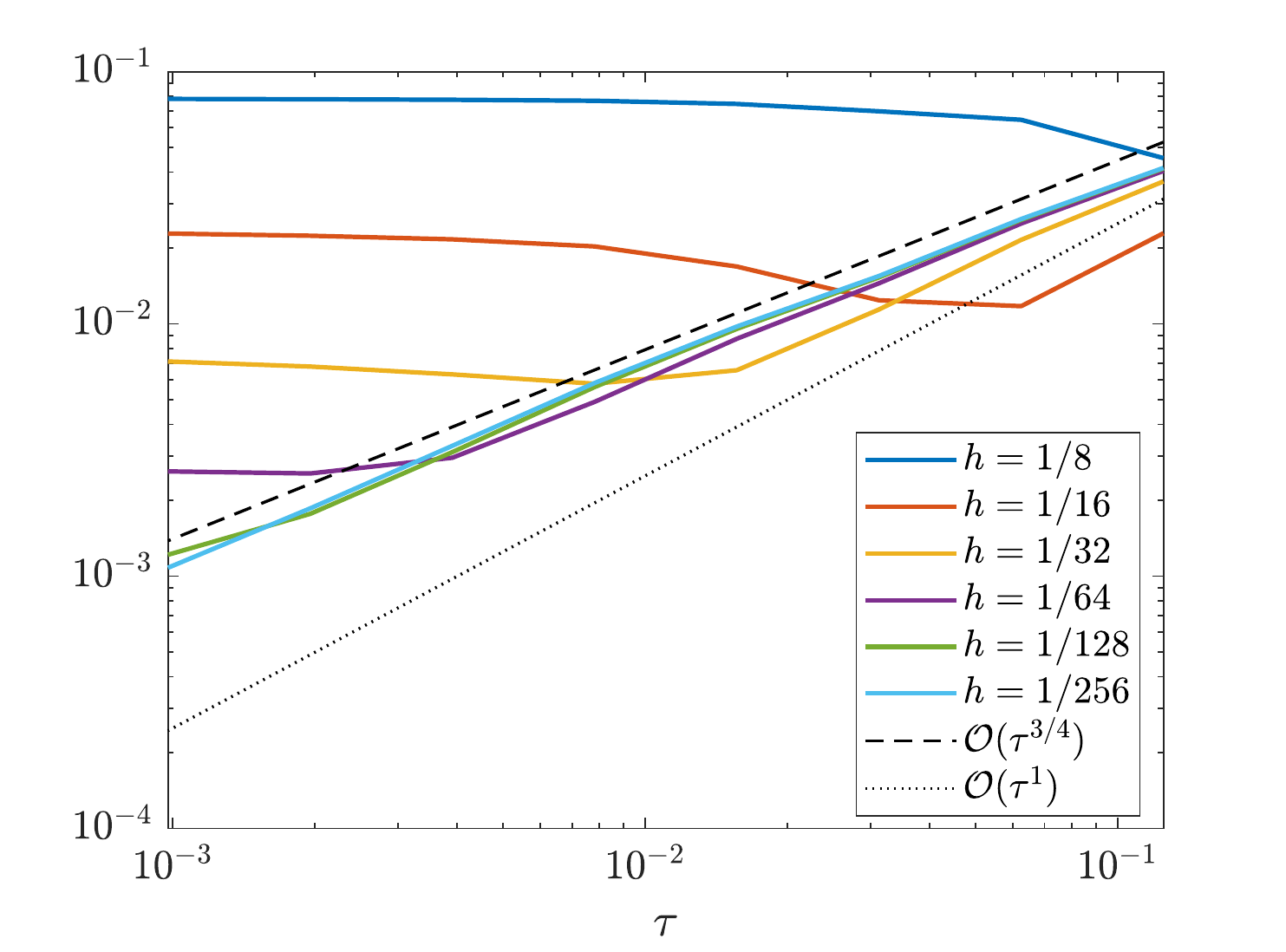}
\caption[Convergence for the stochastic solution in $1d$]{Convergence of the $L^p(\Omega \times (0.01,0.1) \times\D)$-error of the numerical solution in $1d$, a Monte-Carlo Approximation with $10^6$ samples was used to approximate the expectation. 
Left: convergence of the spacial discretization for different time step-sizes, right: convergence of the time discretization for different spacial step-sizes.}\label{Fig:Results_spme_1d_stoch}
\end{figure}

Figure \ref{Fig:Supp_spme_1d_stoch} shows one sample path, the analytical support (for this path) is plotted in red and the support of the approximation in yellow. Finite speed of propagation hold a.s. 
\begin{figure}[H]
\begin{subfigure}[c]{1\textwidth}
\centering
\includegraphics[scale=0.35]{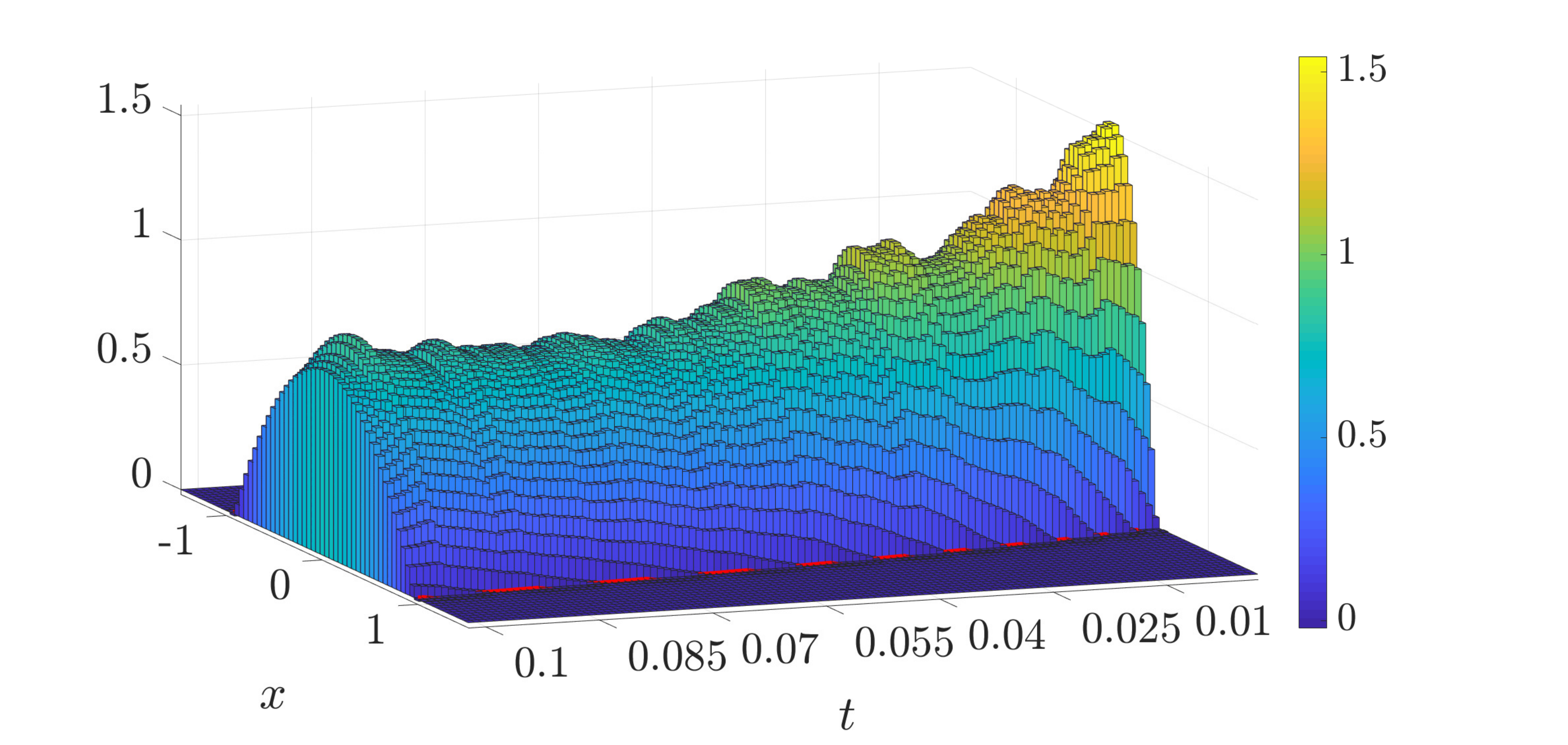}
\caption{$J = 64$, $N = 128$, $1d$}
\end{subfigure}
\begin{subfigure}[c]{1\textwidth}
\centering
\includegraphics[scale=0.35]{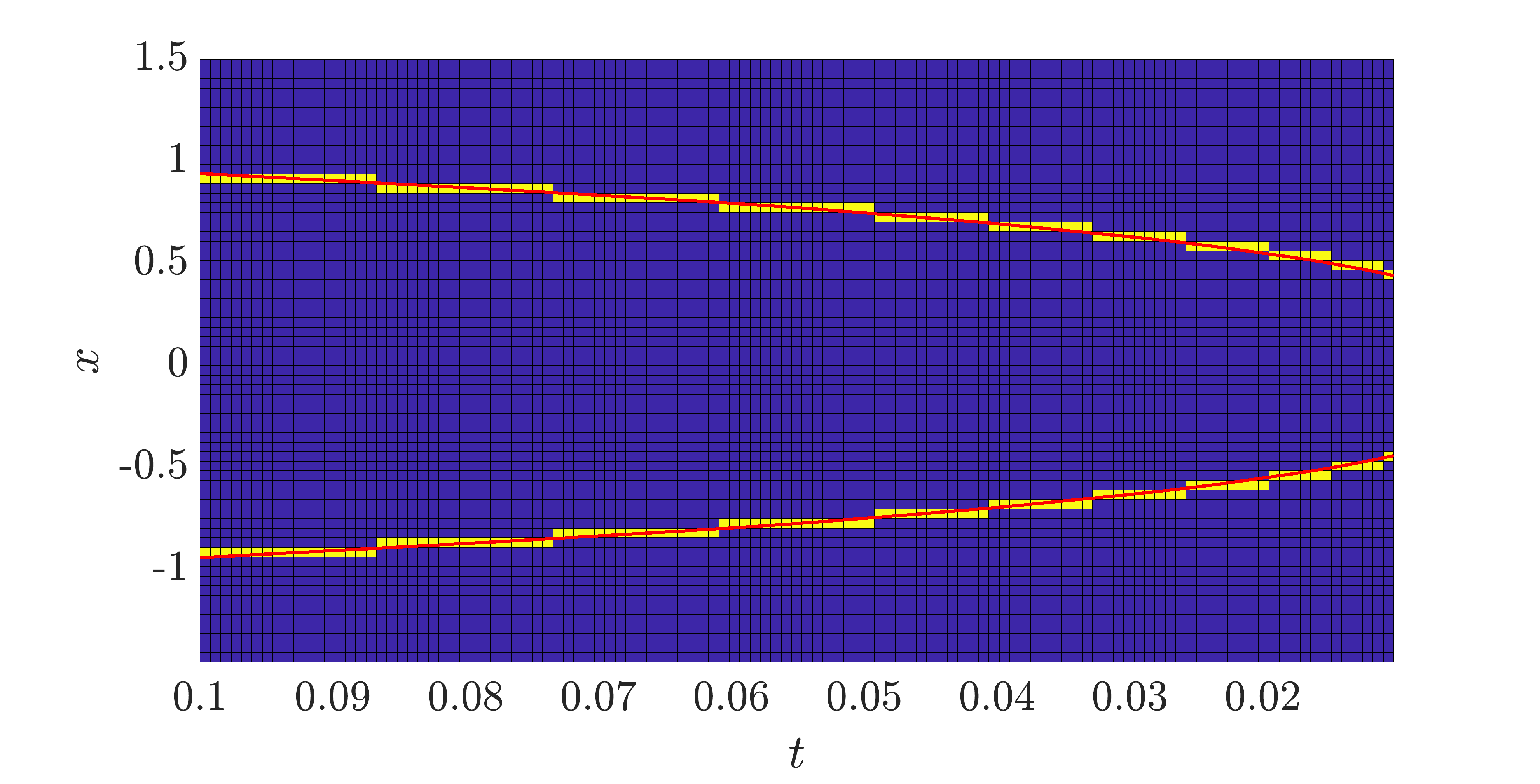}
\caption{Support of the approximation with $J = 64$, $N = 128$, $1d$. The red line indicates the analytical support from \eqref{supp_anal_stoch}.}
\end{subfigure}
\caption{Approximation of the stochastic solution in $1d$.}\label{Fig:Supp_spme_1d_stoch}
\end{figure}

\subsection{Numerical results for space-time noise}
Next, we perform simulation of the stochastic porous media equation with space-time white noise on $\D=[-L,L]$, $L=1.5$, where no analytical solution is available.
Given the 
the mesh size $h=\frac{2L}{J}$ we take $\sigma(u) \equiv \sigma_h(u) = \sigma_0\sum_{i=1}^J u\frac{\bfchi_i}{|\D_i|}$ where
where $\bfchi_i=\mathbbm{1}_{\D_i}$ are the indicator functions of $\D_i$.
We note that the $\bar{\mathbb{V}}_h$-valued noise
$\sigma_h(u) \WW(t,x) = \sigma_0 u \sum_{i=1}^{J}\frac{\bfchi_i(x)}{|\D_i|}\beta_i(t)$ in an approximation of the multiplicative noise $u \widetilde{\WW}$ where $\widetilde{\WW}$ is the space-time white noise,
cf. \cite{ANZ98}.

In Figure~\ref{Fig:spme_1d_stoch2} we display the numerical solution for one realization of the discrete space-time white noise with $\sigma_0=\frac{1}{64}$ along with the corresponding support. 
We observe that the evolution of the support for the space-time white noise does not deviate significantly from the deterministic case. In particular the numerical approximation
preserves the finite speed of propagation of the support, see Figure~\ref{Fig:Supp_spme_1d_stoch_comp}.

\begin{figure}[htp!]
\begin{subfigure}[c]{1\textwidth}
\centering
\includegraphics[trim=0 0 0 40, clip, scale=0.35]{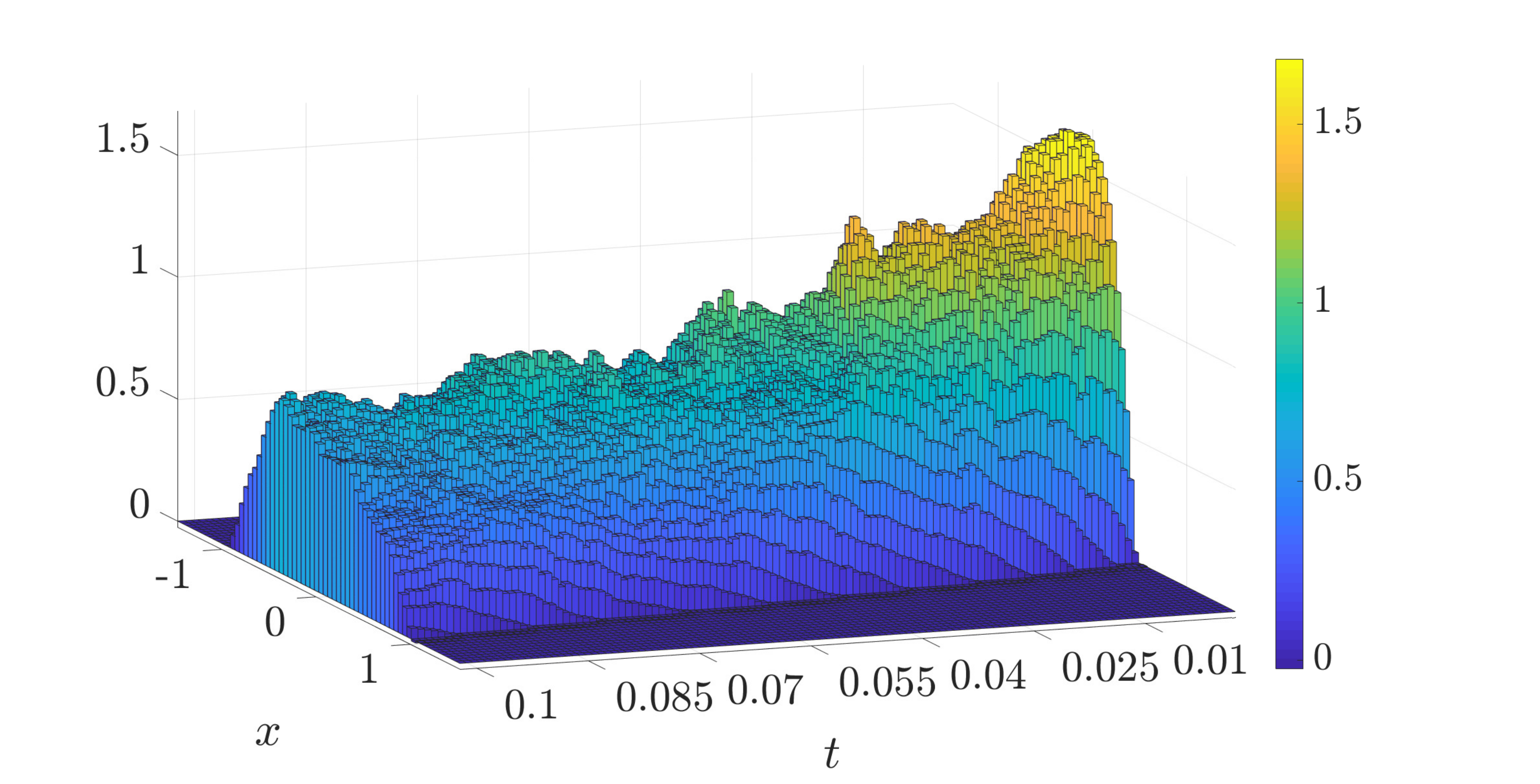}
\caption{Evolution of the numerical solution with the space-time-dependent noise, $J = 64$, $N = 128$}
\end{subfigure}
\begin{subfigure}[c]{1\textwidth}
\centering
\includegraphics[scale=0.35]{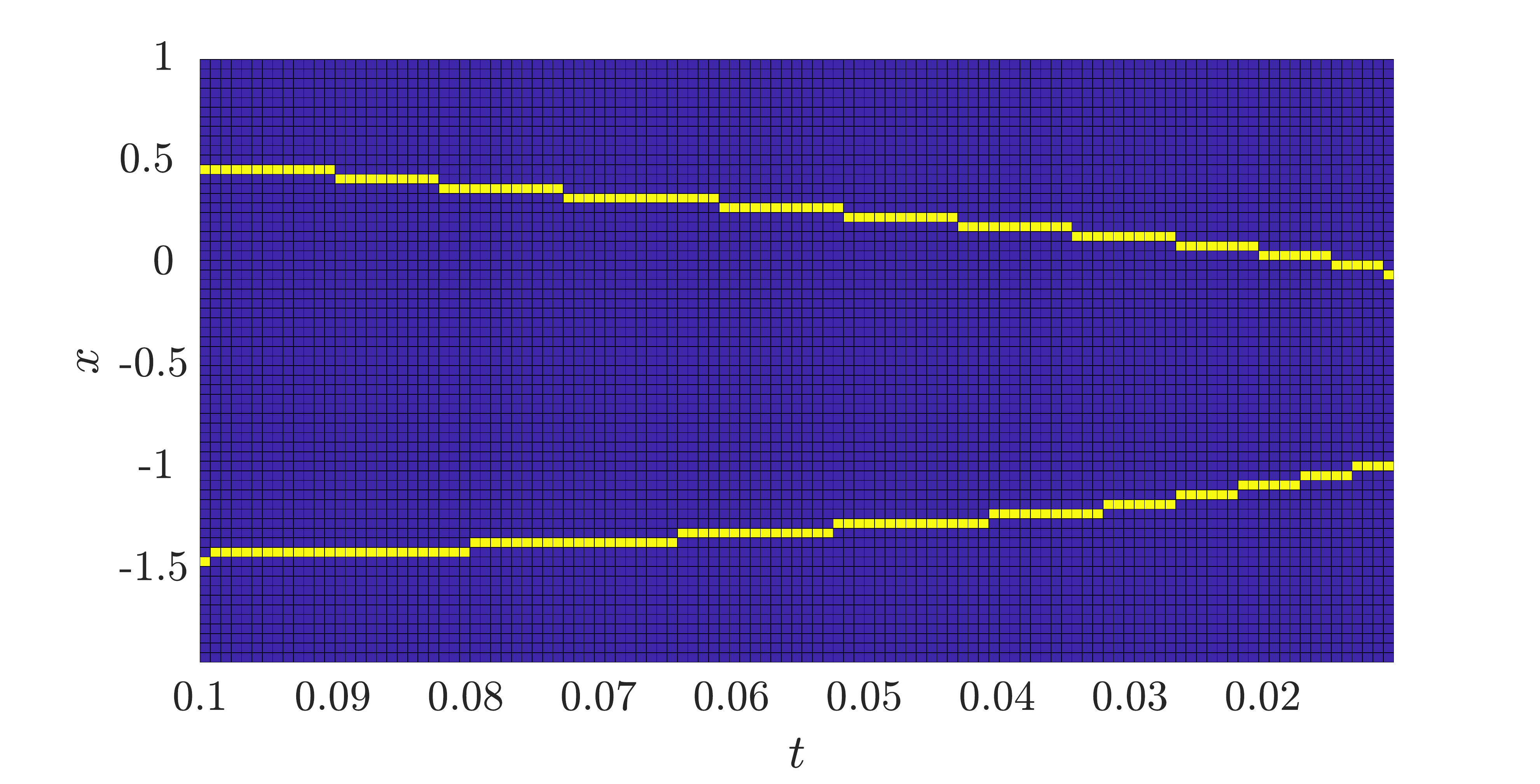}
\caption{Support of the approximation with $J = 64$, $N = 128$, $1d$ - asymmetric}
\end{subfigure}
\caption{Approximation of the stochastic solution with time-space dependent noise in $1d$.}\label{Fig:spme_1d_stoch2}
\end{figure}

\begin{figure}[htp!]
\begin{subfigure}[c]{1\textwidth}
\centering
\includegraphics[scale=0.35]{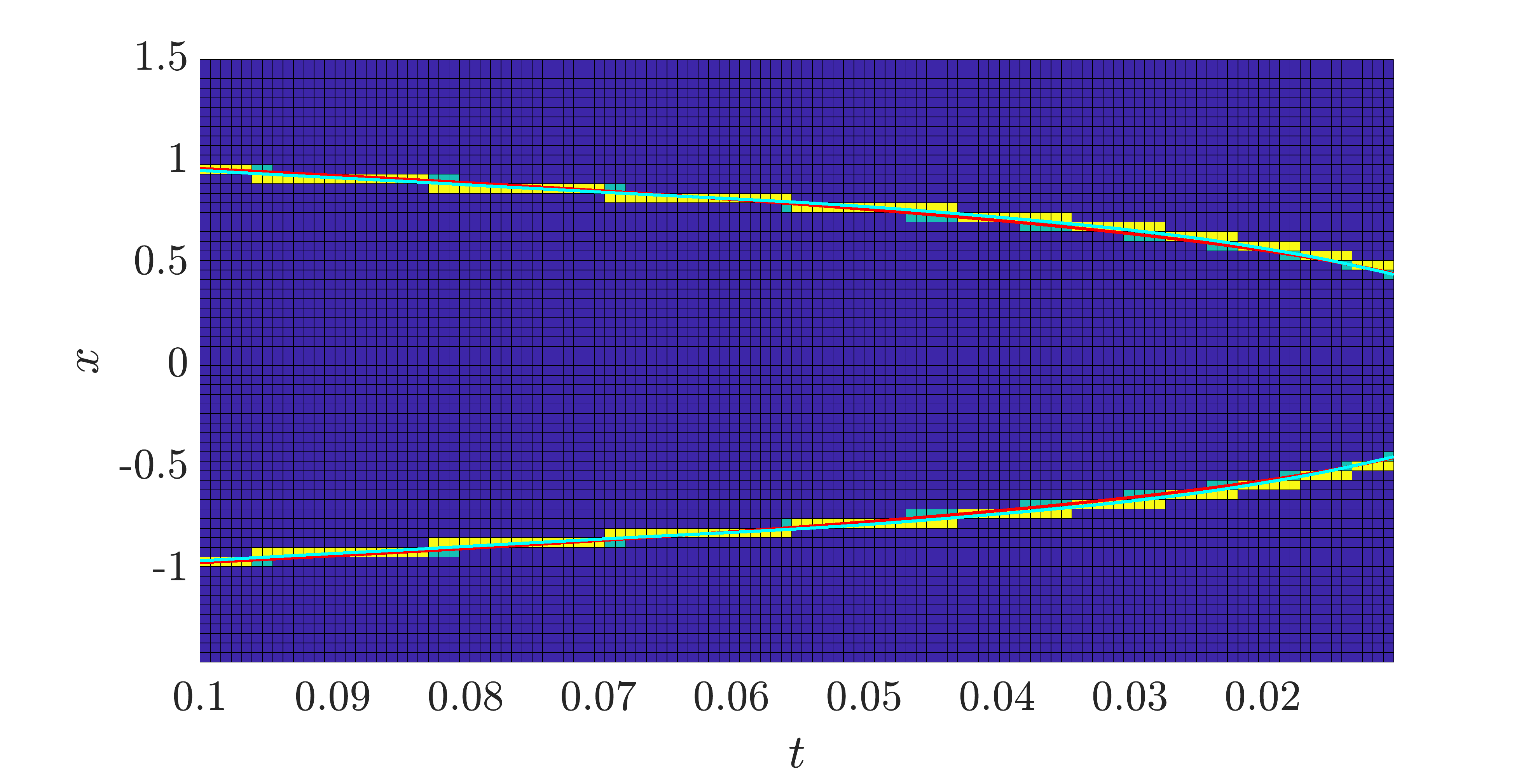}
\caption{Comparison of the spreading of the support $J = 64$, $N = 128$ in $1d$: deterministic in green, stochastic (time dependent noise) in yellow. The lines indicate the analytical support: deterministic in red and stochastic in cyan.}
\end{subfigure}
\begin{subfigure}[c]{1\textwidth}
\centering
\includegraphics[scale=0.35]{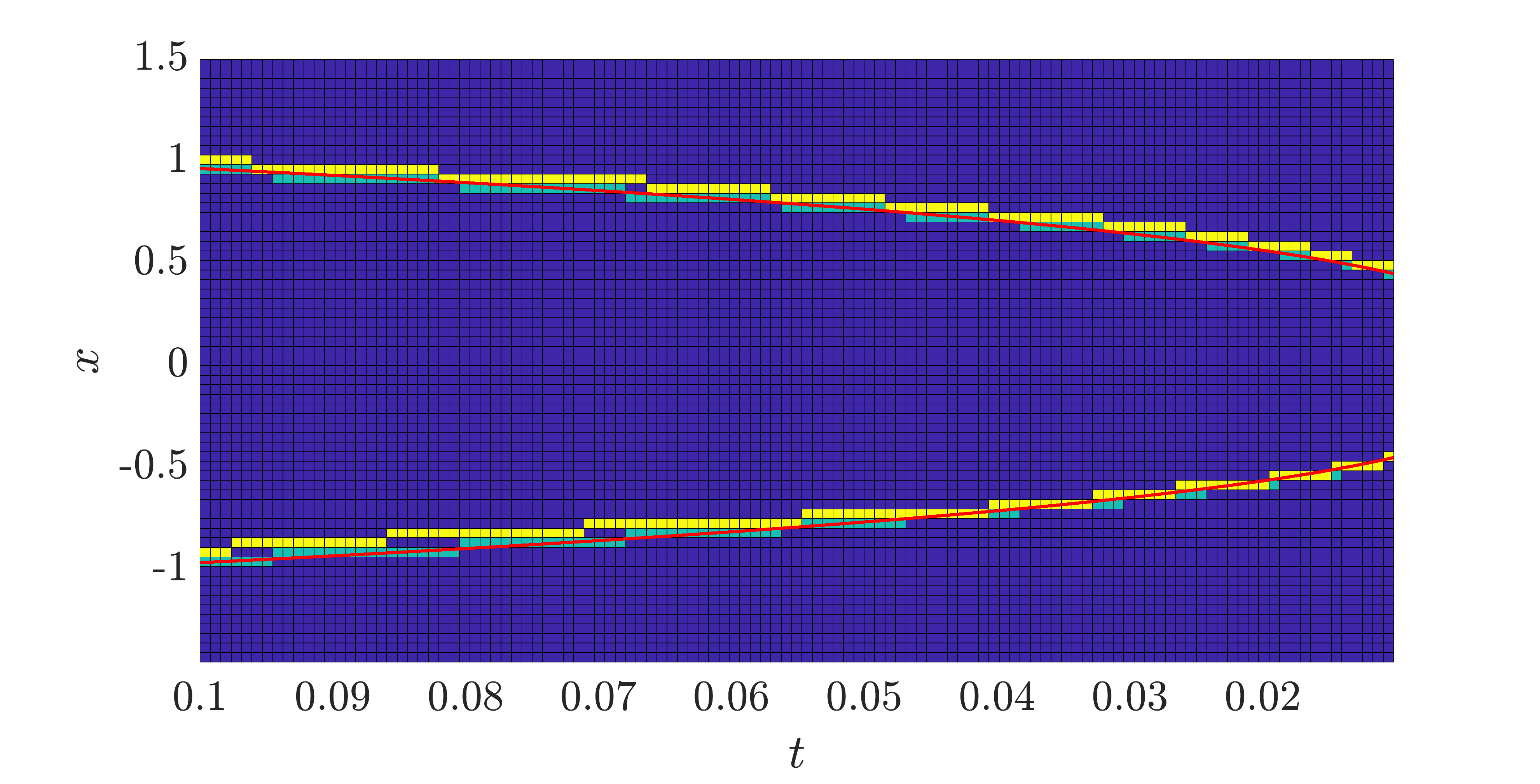}
\caption{Comparison of the spreading of the support $J = 64$, $N = 128$ in $1d$: deterministic in green, stochastic (space-time dependent) in yellow. The red line indicates the analytical support for the deterministic solution.}
\end{subfigure}
\caption{Support of the different approximations det./stochastic in $1d$.}\label{Fig:Supp_spme_1d_stoch_comp}
\end{figure}

\section*{Acknowledgement}
This work was supported by the Deutsche Forschungsgemeinschaft through SFB  1283 ''Taming  uncertainty  and  profiting  from  randomness  and  low  regularity  in  analysis, stochastics and their applications''. 

\bibliographystyle{plain}
\bibliography{spme_cv}


%
\end{document}